\def\H{\boldsymbol{H}}
\def\L{\boldsymbol{L}}
\def\B{\mathbb B}
\def\e{\boldsymbol{e}}
\newcounter{const}
\def\setc#1{\refstepcounter{const}\label{const:#1}C_{\theconst}}
\def\c#1{C_{\ref{const:#1}}}
\def\weakto{\rightharpoonup}
\newcommand{\dual}[2]{\left\langle #1,#2 \right\rangle}
\def\R{\mathbb{R}}
\def\EE{\mathcal{E}}
\def\II{\mathcal{I}}
\def\KK{\boldsymbol{\mathcal{K}}}
\def\MM{\boldsymbol{\mathcal{M}}}
\def\NN{\mathcal{N}}
\def\PP{\mathcal{P}}
\def\SS{\mathcal{S}}
\def\TT{\mathcal{T}}
\def\VV{\boldsymbol{\mathcal{V}}}
\def\v{\boldsymbol{v}}
\def\x{\boldsymbol{x}}
\def\m{\boldsymbol{m}}
\def\n{\boldsymbol{n}}
\def\f{\boldsymbol{f}}
\def\heff{\boldsymbol{h}_{\rm eff}}
\def\ppsi{\boldsymbol\psi}
\def\pphi{\boldsymbol\phi}
\def\Cexchange{C_{\rm exch}}
\def\Cstray field{C_{\rm stray}}
\def\Canisotropy{C_{\rm ani}}
\def\set#1#2{\big\{#1\,:\,#2\big\}}
\def\operator{\boldsymbol{\pi}}
\newcommand{\norm}[3][]{#1\|#2#1\|_{#3}}
\def\revision#1{{\color{black}#1}}
\def\check#1{{\color{black}#1}}
\def\dist{{\rm dist}}
\def\normal{{\boldsymbol{\nu}}}
\def\OO{\mathcal O}
\def\eps{\varepsilon}
\def\A{\mathcal A}
\def\uu{\mathbf u}
\def\vv{\mathbf v}
\def\ww{\mathbf w}
\def\M{\boldsymbol{M}}
\def\BB{\boldsymbol{B}}
\def\j{\boldsymbol{j}}
\def\Hext{\boldsymbol{F}}
\def\Heff{\boldsymbol{H}_{\rm eff}}
\def\uext{u_{\rm app}}
\def\hext{\boldsymbol{f}}
\def\w{\mathbf{w}}
\newtheorem{propappendix}{Proposition}[appendixsection]
\newtheorem{remarkappendix}{Remark}[appendixsection]
\newtheorem{algorithm}{Algorithm}[section]
\begin{document}
\markboth{F.~Bruckner, M.~Feischl, T.~F\"uhrer, P.~Goldenits, M.~Page, D.~Praetorius, \revision{M.~Ruggeri,} and D.~Suess}
{Multiscale Modelling in Micromagnetics: 
\revision{Existence of Solutions} and Numerical Integration}
%
\catchline{}{}{}{}{}
%
\title{Multiscale Modelling in Micromagnetics:\\\revision{Existence of Solutions} and Numerical Integration}
\author{F.~Bruckner, D.~Suess}
\address{Institute of Solid State Physics, Vienna University of Technology, Wiedner Hauptstra\ss{}e 8--10\\
Vienna, 1040, Austria\\
\{Florian.Bruckner,Dieter.Suess\}@tuwien.ac.at}
\author{M.~Feischl, T.~F\"uhrer\footnote{corresponding author}, P.~Goldenits, M.~Page, D.~Praetorius, \revision{M.~Ruggeri}}
\address{Institute for Analysis and Scientific Computing, Vienna University of Technology, Wiedner Hauptstra\ss{}e 8--10\\
Vienna, 1040, Austria\\
\{Michael.Feischl,Thomas.Fuehrer,Dirk.Praetorius,\revision{Michele.Ruggeri}\}@tuwien.ac.at}
\maketitle
\begin{history}
\received{(Day Month Year)}
\revised{(Day Month Year)}
\comby{(xxxxxxxxxx)}
\end{history}
\begin{abstract}
Various applications ranging from spintronic devices, giant magnetoresistance sensors, and magnetic storage devices, include magnetic parts on very different length scales.
Since the consideration of the Landau-Lifshitz-Gilbert equation (LLG) constrains the maximum element size to the exchange length within the media, it is numerically not attractive to simulate macroscopic parts with this approach.
On the other hand, the magnetostatic Maxwell equations do not constrain the element size, but cannot describe the short-range exchange interaction accurately.
A combination of both methods allows to describe magnetic domains within the micromagnetic regime by use of LLG and also considers the macroscopic parts by a non-linear material law using the Maxwell equations.
In our work, we prove that under certain assumptions on the non-linear material law, this multiscale version of LLG admits weak solutions.
Our proof is constructive in the sense that we provide a linear-implicit numerical integrator for the multiscale model such that the numerically computable finite element solutions admit weak $H^1$-convergence (at least for a subsequence) towards a weak solution.
\end{abstract}
\keywords{\revision{Micromagnetics; Landau-Lifshitz-Gilbert equation; multiscale model; finite elements; FEM-BEM coupling.}}
\ccode{\revision{AMS Subject Classification: 35K22, 65M60, 65N30}}
\section{Introduction}
\label{sec:intro}
\noindent
The understanding of magnetization dynamics, especially on a microscale, is of utter relevance, for example in the development of magnetic sensors, recording heads, and magnetoresistive storage devices.
In the literature, a well accepted model for micromagnetic phenomena is the Landau-Lifshitz-Gilbert equation (LLG), see~\eqref{eq:llg:physics}.
This non-linear partial differential equation describes the behaviour of the magnetization of some ferromagnetic body under the influence of a so-called effective field.
Existence (and non-uniqueness) of weak solutions of LLG goes back to Ref.~\refcite{as}.
As far as numerical simulation is concerned, convergent integrators can be found, e.g., in the works of Refs.~\refcite{bp}, \refcite{bjp} or~\refcite{bbp}, where even coupling to Maxwell's equations is considered.
For a complete review, we refer to Refs.~\refcite{cimrak}, \refcite{gc}, \refcite{mp06} or the monographs~\refcite{hubertschaefer}, \refcite{prohl} and the references therein.
Recently, there has been a major breakthrough in the development of effective and mathematically convergent algorithms for the numerical integration of LLG.
In Ref.~\refcite{alouges08}, an integrator is proposed which is unconditionally convergent and only needs the solution of one linear system per time step.
The effective field in this work, however, only covers microcrystalline exchange effects and is thus quite restricted.
In the subsequent works of Refs.~\refcite{alouges11}, \refcite{goldenits}, \refcite{mathmod2012}, \refcite{gamm2011} the analysis for this integrator was widened to cover more general (linear) field contributions while still \revision{conserving} unconditional convergence. 
\par In our work, we generalize the integrator from Ref.~\refcite{alouges08} even more and basically allow arbitrary field contributions (Section~\ref{section:general}).
Under some assumptions on those contributions, namely boundedness and some weak convergence property,
see~\eqref{assumption:chi:bounded}--\eqref{assumption:chi:convergence}, our main theorem still proves unconditional convergence towards some weak solution of LLG (Theorem~\ref{theorem}).
In particular, our analysis allows to incorporate the approximate \revision{computation} of effective field contributions like, e.g., the stray field which cannot be computed analytically in practice, but requires certain FEM-BEM coupling methods (Section~\ref{section:fredkinkoehler}).
Such additional approximation errors have so far been neglected in the previous works.
To illustrate this, we show that the hybrid \revision{FEM-BEM approaches from Refs.~\refcite{fredkinkoehler,gcr}} for stray field computations does not affect the unconditional convergence of the proposed integrator (Proposition~\ref{prop:stray field}, Proposition~\ref{prop:gcr}).
\par From the point of applications, the numerical integration of LLG restricts the maximum element size for the underlying mesh to the (material dependent) exchange length in order to numerically resolve domain wall patterns.
Otherwise, the numerical simulation is not able to capture the effects stemming from the exchange term and would lead to qualitatively wrong and even unphysical results.
However, due to limited memory, this constraint on the mesh-size practically also imposes a restriction on the actual size of the contemplated ferromagnetic sample.
Considering the magnetostatic Maxwell equations combined with a (non-linear) material law instead, one does not face such a restriction on the mesh-size (and thus on the computational domain). On the one hand, this implies that such a rough model cannot be used to describe short-range interactions like those driving LLG.
On the other hand, this gives us the opportunity to cover larger domains and still maintain a manageable problem size.
\par In our work, we show how to combine microscopic and macroscopic domains to simulate a multiscale problem (Section~\ref{sec:multimodel}):
On the microscopic part, where we aim to simulate the configuration of the magnetization, we solve LLG.
The influence of a possible macroscopic part, where the magnetization is not the goal of the computation, is described by means of the magnetostatic Maxwell equations in combination with some (non-linear) material law.
This macroscopic part then gives rise to an additional non-linear and nonlocal field contribution (Section~\ref{sec:multi}) such that unconditional convergence of the numerical integrator or even mere existence of weak solutions in this case is not obvious.
For certain practically relevant material laws, we analyze a discretization of the multiscale contribution by means of the Johnson-N\'ed\'elec coupling and prove that the proposed numerical integrator still preserves unconditional convergence.
Striking numerical experiments for our approach are given and discussed in Ref.~\refcite{bruckner}.
\subsection*{Outline}
\noindent The remainder of this paper is organized as follows:
In Section~\ref{sec:multimodel}, we give a motivation and the mathematical modelling for our multiscale model.
While Section~\ref{sec:maxwell} focuses on the new contribution to the effective field, Section~\ref{subsection:llg} recalls the LLG
equation used for the microscopic part.
In Section~\ref{section:general}, we introduce our numerical integrator in a quite general framework and formulate the main result (Theorem~\ref{theorem}) which states unconditional convergence under certain assumptions on the (discretized) effective field contributions.
The remainder of this section is then dedicated to the proof of Theorem~\ref{theorem}.
In Section~\ref{section:heff}, we consider different effective field contributions as well as possible discretizations and show that the assumptions of Theorem~\ref{theorem} are satisfied.
Our analysis includes general anisotropy densities (Section~\ref{sec:anisotropy}) as well as contributions which stem from the solution of operator equations with \revision{strongly} monotone operators (Section~\ref{sec:monotone}).
This abstract framework then covers, in particular, the hybrid FEM-BEM \revision{discretizations from Refs.~\refcite{fredkinkoehler,gcr}} for the stray field (Section~\ref{section:fredkinkoehler}) as well as the proposed multiscale contribution to the effective field (Section~\ref{sec:multi}). \revision{A short appendix comments on some physical energy dissipation.}

\section{Multiscale model}\label{sec:multimodel}
In our model, we consider two separated ferromagnetic bodies $\Omega_1$ and $\Omega_2$ as schematized in Figure~\ref{fig:regions}.
Let $\Omega_1,\Omega_2\subset\R^3$ be bounded Lipschitz domains with Euclidean distance $\dist(\Omega_1,\Omega_2)>0$ and boundaries $\Gamma_1 = \partial\Omega_1$ resp.\ $\Gamma_2=\partial\Omega_2$.
On the microscopic part $\Omega_1$, we are interested in the domain configuration and thus solve \revision{LLG}.
On $\Omega_2$, we will use the macroscopic Maxwell equations with a (possibly non-linear) material law instead.
\par To motivate this setting, we consider a magnetic recording head (see Figures~\ref{fig:regions} and \ref{fig:readhead_overview}).
The microscopic sensor element is based on the  giant magnetoresistance effect \revision{(GMR)}, and it requires the use of LLG in order to describe the short range interactions between the individual layers of the sensor accurately.
On the other hand, the smaller these sensor elements are, the more important becomes the shielding of the stray field of neighbouring data bits.
In practice, this is achieved by means of some macroscopic softmagnetic shields located directly besides the GMR sensor.
Describing these large components by use of LLG would lead to very large problem sizes, because the detailed domain structure within the magnetic shields would be calculated.
As proposed in this paper, macroscopic Maxwell equations allow to overcome this limitation and thus provide a profound method to describe the influence of the shields in an averaged sense.
While this work focuses on the mathematical model and a possible discretization, we refer to Ref.~\refcite{bruckner} for numerical simulations and the experimental validation of the proposed model.
\begin{figure}[h!]
\centering
\begin{overpic}[width=0.5\columnwidth]{./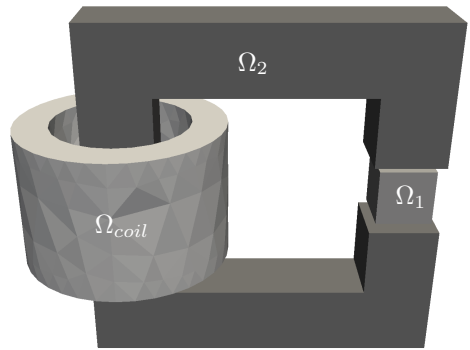}
	\put(20,26){\color{white}$\Omega_{coil}$}
	\put(83,32){\color{white}$\Omega_{1}$}
	\put(50,60){\color{white}$\Omega_{2}$}
\end{overpic}
\caption{\small Example geometry which demonstrates model separation into LLG region $\Omega_1$ and Maxwell region $\Omega_2$ (and in this case in an electric coil region $\Omega_{coil}$). Here, $\Omega_1$ represents one grain of a recording media and $\Omega_2$ shows a simple model of a recording write head.}
\label{fig:regions}
\end{figure}
\begin{figure}[h!]
\centering
\begin{overpic}[width=0.5\columnwidth]{./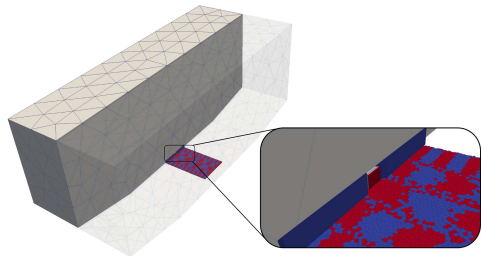}
\end{overpic}
\caption{\small The example setup consists of a microscopic GMR sensor element in between two macroscopic shields. Beyond the GMR sensor a magnetic storage media is indicated. The multiscale algorithm is used to calculate the stationary state of the GMR sensor for various applied external fields.} 
\label{fig:readhead_overview}
\end{figure}
\subsection{Magnetostatic Maxwell equations}\label{sec:maxwell}
The magnetostatic Maxwell equations read
\begin{align}\label{eq:maxwell}
 \nabla\times\H = \j
 \quad\text{and}\quad
 \nabla\cdot\BB = 0
 \quad\text{in }\R^3,
\end{align}
where $\H:\R^3\to\R^3$ is the magnetic field strength $[A/m]$ and $\BB:\R^3\to\R^3$ is the magnetic flux density $[T]$ which are related by
\begin{equation}\label{eq:flux}
\BB = \mu_0(\H + \M) \quad \text{in }\R^3
\end{equation}
with $\mu_0=4\pi\cdot10^{-7}$ $Tm/A$ the permeability of vacuum.
The current density $\j$ $[A/m^2]$ is the source of the magnetic field strength $\H$.
The magnetization field $\M$ $[A/m]$ is non-trivial on the magnetic bodies $\Omega_1\cup\Omega_2$, but vanishes in $\R^3\backslash\overline{(\Omega_1\cup\Omega_2)}$.
The total magnetic field is split into
\begin{align}\label{eq:splitTotalMagField}
\H = \H_1 + \H_2 + \Hext,
\end{align}
where $\H_j:\R^3\to\R^3$ is the magnetic field induced by the magnetization $\M_j=\M|_{\Omega_j}$ on $\Omega_j$ and $\Hext$ is the field generated by the current density $\j$ in $\R^3\backslash\overline{\Omega_1\cup\Omega_2}$.
This implies
\begin{align}
\nabla\times\Hext = \j
\quad\text{and therefore}\quad
\nabla\times\H_j = 0
\quad\text{in }\R^3.
\end{align}
In particular, the induced fields are gradient fields $\H_j = -\nabla U_j$ with certain scalar potentials $U_j:\R^3\to\R$.
We assume that $\Hext$ is induced by currents only, but not by magnetic monopoles. Therefore,
\begin{align}\label{eq:Hext:div}
\nabla\cdot\Hext = 0\quad\text{in }\R^3.
\end{align}
Moreover, the sources of $\H_j$ lie inside $\Omega_j$ only and hence
\begin{align}\label{eq:divHj}
\nabla\cdot\H_j = 0 \quad\text{in }\R^3 \backslash \overline\Omega_j.
\end{align}
From the magnetic flux $\BB$, we obtain
\begin{equation*}
0
= \nabla\cdot\BB = \mu_0(\nabla\cdot\H + \nabla\cdot\M)
= \mu_0(\nabla\cdot\H_j + \nabla\cdot\revision{\M_j})
\quad \text{in } \Omega_j.
\end{equation*}
Together with $\H_j = -\nabla U_j$ and~\eqref{eq:divHj}, this reveals
\begin{subequations}\label{eq:uj}
\begin{align}
\label{eq:uj:interior}
 \Delta U_j &= \nabla\cdot\revision{\M_j} \quad\text{in }\Omega_j,\\
\label{eq:uj:exterior}
 \Delta U_j &= 0 \hspace*{13.0mm}\text{in }\R^3 \backslash\overline\Omega_j.
\end{align}
\end{subequations}
For the micromagnetic body $\Omega_1$, the respective magnetization $\M_1$ is computed by LLG, see Section~\ref{subsection:llg} below.
The overall transmission problem~\eqref{eq:uj} for $\Omega_1$, supplemented by transmission conditions as well as a radiation condition, reads
\begin{subequations}\label{eq:u1}
\begin{align}
 \Delta U_1 &= \nabla\cdot\M_1
 \hspace*{6.65mm}\text{in }\Omega_1,\\
 \Delta U_1 &= 0
 \hspace*{16.5mm}\text{in }\R^3 \backslash\overline\Omega_1,\\
 U_1^{\rm ext}-U_1^{\rm int} &= 0
 \hspace*{16.5mm}\text{on }\Gamma_1,\\
 \revision{\nabla(U_1^{\rm ext}- U_1^{\rm int})\cdot\normal_1} &= -\M_1\cdot
 \revision{\normal_1}
 \hspace*{3.0mm}\text{on }\Gamma_1,\\
 U_1(x) &= \OO(1/|x|)
 \hspace*{5.3mm}\text{as }|x|\to\infty.
\end{align}
\end{subequations}
Here, the superscripts \emph{int} and \emph{ext} indicate whether the trace is considered from inside $\Omega_1$ (resp. $\Omega_2$ in~\eqref{eq:u:omega2} below) or the exterior domain $\R^3\backslash\overline\Omega_1$ (resp. $\R^3\backslash\overline\Omega_2$ in~\eqref{eq:u:omega2} below).
Moreover, \revision{$\normal_j$} denotes the outer unit normal vector on $\Gamma_j$, which points from \revision{$\Omega_j$} to the exterior domain \revision{$\R^3\backslash\overline\Omega_j$}.
For the macroscopic body $\Omega_2$, we assume a non-linear material law
\begin{align}\label{eq:material}
 \revision{\M_2} = \chi(|\H|)\H
 \quad\text{on }\Omega_2
\end{align}
with a scalar function $\chi:\R_{\ge0}\to\R$ and $|\cdot|$ the modulus. Some examples for suitable $\chi$ are listed below (see Remark~\ref{rem:materiallaw}).
\par For the computation of the potential $U_2$, we introduce an auxiliary potential $U_{\text{app}}$. 
\revision{Since} $\nabla \times \Hext = 0$ in \revision{the simply connected
domain $\Omega_2$}, we infer $\Hext = -\nabla U_{\text{app}}$ on $\Omega_2$ with some
potential $U_{\text{app}} : \Omega_2\rightarrow \R$. According to~\eqref{eq:Hext:div} and up to an additive constant,
\revision{$U_{\text{app}}$} can be obtained as the unique solution of the Neumann problem
\begin{subequations}\label{eq:uext}
\begin{align}\label{eq:uext:interior}
  \Delta U_{\text{app}} &= 0 \hspace*{19.3mm}\text{in }\Omega_2, \\
  \label{eq:uext:boundary}
  \revision{\nabla U_{\text{app}}^{\rm int}\cdot\normal_2} &= - \Hext^{\rm int}\cdot\revision{\normal_2} 
  \hspace*{5mm}\text{on }\Gamma_2,
\end{align}
\end{subequations}
with $\int_{\Omega_2} U_{\text{app}} = 0$. The transmission problem for the total
potential $U = U_1 + U_2 + U_{\text{app}}$ of the total magnetic field $\H = -\nabla U$ 
in $\Omega_2$ and for the potential $U_2$ in
$\R^3\backslash\overline\Omega_2$, supplemented by a radiation condition,
reads
\begin{subequations}\label{eq:u:omega2}
\begin{align}
  \label{eq:u:omega2:interior}
  \nabla\cdot\big( (1+\chi(|\nabla U|))\nabla U\big) &= 0
  \hspace*{29.1mm}\text{in }\Omega_2, \\
  \Delta U_2 &= 0 
  \hspace*{29.1mm}\text{in }\R^3\backslash\overline\Omega_2, \\
  \label{eq:u:omega2:jumpu}
  U_2^{\rm ext} - U^{\rm int} &= -U_1^{\rm int} - U_{\text{app}}^{\rm int}
  \hspace*{10.5mm}\text{on }\Gamma_2, \\
  \label{eq:u:omega2:jumpdu}
  \revision{\big(\nabla U_2^{\rm ext} - (1+\chi(|\nabla U^{\rm int}|))\nabla 
  U^{\rm int}\big)\cdot\normal_2} &= (\H_1^{\revision{\rm int}} + \Hext^{\revision{\rm int}})\cdot\revision{\normal_2}
  \hspace*{-0.6mm}\quad\text{on }\Gamma_2, \\
  U_2(x) &= \OO(1/|x|) 
  \hspace*{18.0mm}\text{as }|x|\to\infty,
\end{align}
\end{subequations}
where~\eqref{eq:u:omega2:interior} follows
from~\eqref{eq:maxwell}--\eqref{eq:divHj} and~\eqref{eq:material}.
\revision{The transmission condition~\eqref{eq:u:omega2:jumpu} follows from the continuity of $U_2$ on $\Gamma_2$ and $U = U_1+U_2+U_{\text{app}}$ in
$\Omega_2$. 
To see~\eqref{eq:u:omega2:jumpdu}, we stress that~\eqref{eq:maxwell} implies $(\BB^{\rm ext}-\BB^{\rm int})\cdot\normal_2=0$ on $\Gamma_2$.
Putting~\eqref{eq:flux}--\eqref{eq:splitTotalMagField} into this condition and using $\H = -\nabla U$ in $\Omega_2$ as well
as~\eqref{eq:material} gives us
\begin{align*}
  (\H_1^{\rm ext} + \H_2^{\rm ext} + \Hext^{\rm ext} - (1+\chi(|\nabla U^{\rm int}|))\nabla U^{\rm int}) \cdot \normal_2=0 
  \quad\text{on }\Gamma_2.
\end{align*}
Moreover, from~\eqref{eq:Hext:div} and~\eqref{eq:divHj} we infer $(\Hext^{\rm ext}-\Hext^{\rm int})\cdot\normal_2 = 0 = (\H_1^{\rm
ext}-\H_1^{\rm int})\cdot\normal_2$ on $\Gamma_2$.
Together with $\H_2 = -\nabla U_2$, the transmission condition~\eqref{eq:u:omega2:jumpdu} follows.}
\begin{remark}
In case of a linear material law $\chi(|\H|) = \chi \in \R_{>0}$ in~\eqref{eq:material}, the transmission problem~\eqref{eq:u:omega2} simplifies to $(1+\chi)\Delta U_2 = 0$ in $\Omega_2$, $U_2^{\rm ext}-U_2^{\rm int} = 0$ on $\Gamma_2$, and $\revision{\big(\nabla U_2^{\rm ext} - (1+\chi)\nabla U_2^{\rm int}\big)\cdot\normal_2} = (\H_1^{\rm int} + \Hext^{\rm int}) \cdot \revision{\normal_2}$ on $\Gamma_2$ in~\eqref{eq:u:omega2:interior},~\eqref{eq:u:omega2:jumpu}, and~\eqref{eq:u:omega2:jumpdu}, respectively.
In particular, the Neumann problem~\eqref{eq:uext} does not have to be solved.
Moreover, we do not have to assume that $\Omega_2$ is simply connected.
\end{remark}
\subsection{Landau-Lifshitz-Gilbert equation}
\label{subsection:llg}
Let $\alpha> 0$ denote a dimensionless empiric damping parameter, called Gilbert damping constant, and let the magnetization of the ferromagnetic body $\Omega_1$ be characterized by the vector valued function
\begin{equation*}
\M_1:\, (0,T)\times\Omega_1 \rightarrow \set{\x\in\R^3}{|\x|=M_s},
\end{equation*}
where the constant $M_s>0$ refers to the saturation magnetization $[A/m]$.
Then, the Landau-Lifshitz-Gilbert equation reads
\begin{subequations}\label{eq:llg:physics}
\begin{align}\label{eq:llg_phy}
\frac{\partial\M_1}{\partial t}
= -\frac{\gamma_0}{1+\alpha^2} \M_1\times\Heff
- \frac{\alpha\gamma_0}{(1+\alpha^2)M_s} \M_1\times(\M_1\times\Heff),
\end{align}
supplemented by initial and Neumann boundary conditions
\begin{align}
 \M_1(0) &= \M^0
 \quad\text{in }\Omega_1,\\
 \partial_\normal\M_1 &= 0
 \hspace*{7.5mm}\text{on }(0,\revision{T})\times\partial\Omega_1.
\end{align}
\end{subequations}
Here, \revision{$\gamma_0 = 2,210173\cdot10^5$ $m/(As)$} denotes the gyromagnetic ratio
and $\M^0:\Omega_1\to\R^3$ with $|\M^0| = M_s$ in $\Omega_1$ is a given initial
magnetization.
The effective field $\Heff$ in $[A/m]$ depends on $\M_1$ and the magnetic field strength
$\H$, and is given as the negative \revision{first} variation of the Gibbs free energy
\begin{equation*}
\revision{\mu_0}\,\Heff = -\frac{\delta E(\M_1)}{\delta\M_1}.
\end{equation*}
In this work, the energy $E(\cdot)$ consists of exchange energy, anisotropy
energy as well as magnetostatic energy
\begin{equation*}
 E(\M_1) = \frac{A}{M_s^2}\,\int_{\Omega_1} |\nabla\M_1|^2
 + K\,\int_{\Omega_1}\phi(\M_1/M_s)
 - \mu_0\int_{\Omega_1}\H\cdot\M_1.
\end{equation*}
The exchange constant $A>0$ $[J/m]$ and anisotropy constant $K>0$ $[J/m^3]$ depend on the ferromagnetic material.
Moreover, $\phi$ refers to the crystalline anisotropy density.
The effective field is thus given by
\begin{equation*}
\Heff = \frac{2A}{\mu_0 M_s^2}\Delta \M_1 - \frac{K}{\mu_0\revision{M_s}}D\phi(\M_1\revision{/M_s}) + \H.
\end{equation*}
Note that the microscopic LLG equation and the macroscopic Maxwell equations are coupled through the magnetic field strength $\H$ and hence through the effective field $\Heff$.
Altogether, we will thus solve the multiscale problem by solving LLG on $\Omega_1$ and incorporating the effects of $\Omega_2$ via this coupling.
\section{General LLG equation}
\label{section:general}
\noindent In this section, we consider the non-dimensional form of LLG with a quite general effective field $\heff$ which covers the multiscale problem from the previous section.
We recall some equivalent formulations of LLG and then state our notion of a weak solution, which has been introduced by \textsc{Alouges} \& \textsc{Soyeur}, see Ref.~\refcite{as}, for the small-particle limit $\heff =\Delta \m$ and which is now extended to the present situation. We then formulate a linear-implicit
time integrator in the spirit of Refs.~\refcite{alouges08}, \refcite{alouges11}, \refcite{goldenits}, \refcite{mathmod2012}, \refcite{gamm2011}. 
\subsection{Non-dimensional form of LLG}
\noindent We perform the substitution $t' = \gamma_0M_st$ with $t'$ being the so-called 
(non-dimensional) reduced time, and set $T' = \gamma_0M_sT$ the scaled final time.
Moreover, we rescale the spatial variable $x'=x/L$ with $L$ being some characteristic
length of the problem $[m]$, e.g., the intrinsic length scale
$L = \sqrt{2A/(\mu_0M_s^2)}$. However, to simplify our notation, we stick with $t,T,x,\Omega_j$ instead of $t',T',x/L,\Omega_j/L$, respectively,
and abbreviate the space-time cylinder $\Omega_t = [0,t]\times\Omega_1$
for all $0\le t\le T$.
We set $\m := \M_1/M_s$, $\m^0 := \M^0/M_s$, $\heff:=\Heff/M_s$. With these
notations, the (sought) magnetization $\m:\Omega_T\to\set{x\in\R^3}{|x|=1}$
solves the non-dimensional form of LLG
\begin{subequations}\label{eq:llg}
\begin{align}\label{eq:llg1}
 \revision{\partial_t\m} 
 = - \frac{1}{1+\alpha^2}\,\m\times\heff
 -\frac{\alpha}{1+\alpha^2}\,\m\times(\m\times\heff) 
 \quad\revision{\text{in }\Omega_T},
\end{align}
supplemented by initial and \revision{Neumann} boundary conditions
\begin{align}\label{eq:llg:bc1}
 \m(0) &= \m^0\quad\mbox{in }\Omega_1,\\
 \label{eq:llg:bc2}
 \partial_\normal\m &=0 \quad\quad\mbox{in }(0,T)\times\partial\Omega_1.
\end{align}
\end{subequations}
The \revision{non-dimensional} effective field reads
\begin{align*}
 \heff = 
 \frac{2A}{\mu_0 M_s^2\revision{L^2}}\,\Delta\m
 - \frac{K}{\mu_0\revision{M_s^2}}\,D\phi(\m)
 + \hext - \nabla u_1 -\nabla u_2,
\end{align*}
where $u_1$ \revision{solves}~\eqref{eq:u1} with $\M_1$ being replaced by $\m$
and where $u_2$ \revision{solves}~\eqref{eq:u:omega2} with, e.g., $\Hext$ replaced by $\hext$,
$\H_1$ replaced by $-\nabla u_1$, etc. For the non-linearity $\chi$, we introduce some $\widetilde \chi$ in the non-dimensional formulation. Details are elaborated in Section~\ref{sec:multi}.
\begin{remark}
Note that~\eqref{eq:llg1} implies $0 = \m \cdot \partial_t \m = \partial_t |\m|^2/2$, i.e., the time derivative $\partial_t \m$ belongs to the tangent space 
of $\m$. \revision{In particular,} the modulus constraint $|\m| = 1$ in $\Omega_T$
also follows from the PDE formulation~\eqref{eq:llg1} \revision{and $|\m^0|=1$ in $\Omega_1$}.
\end{remark}
\subsection{Notation and function spaces involved}
\noindent In this brief section, we collect the necessary notation as well as the relevant function spaces that will be used \revision{throughout}.
By $L^2$, we denote the usual Lebesgue space of square integrable functions and by $H^1$
the Sobolev space of functions in $L^2$ that additionally admit a weak \revision{gradient} in $L^2$. For vector fields and corresponding spaces, we use bold symbols,
e.g., for $\f \in \L^2(\Omega_1)$, we write
\begin{align*}
\norm{\f}{\L^2(\Omega_1)}^2 = \sum_{i=1}^3 \norm{f_i}{L^2(\Omega_1)}^2.
\end{align*}
For the space-time cylinder $\revision{\Omega_T=[0,T]\times\Omega_1}$, we consider the 
\revision{function} spaces $L^2(\L^2):=L^2\big([0,\revision{T}], \L^2(\Omega_1)\big) = \L^2(\revision{\Omega_T})$, $L^2(\H^1):=L^2\big([0,\revision{T}], \H^1(\Omega_1)\big)$, and $\H^1(\revision{\Omega_T})$ which are associated with the norms
\begin{align*}
\norm{\f}{L^2(\L^2)}^2 &:=\norm{\f}{\L^2(\revision{\Omega_T})}^2 = \int_0^\revision{T} \norm{\f(t)}{\L^2(\Omega_1)}^2\, dt,\\
\norm{\f}{L^2(\H^1)}^2 &:=\norm{\f}{L^2([0,\revision{T}], \H^1(\Omega_1))}^2 = \int_0^{\revision{T}} \norm{\f(t)}{\L^2(\Omega_1)}^2 + \norm{\nabla \f(t)}{\L^2(\Omega_1)}^2\, dt,\\
\norm{\f}{\H^1(\revision{\Omega_T)}}^2 &= \int_0^{\revision{T}} \norm{\f(t)}{\L^2(\Omega_1)}^2 + \norm{\nabla \f(t)}{\L^2(\Omega_1)}^2 + \norm{\partial_t \f(t)}{\L^2(\Omega_1)}^2\, dt,
\end{align*}
respectively. 
Finally, for appropriate sets $\Sigma$, we denote by $\dual{\cdot}{\cdot}_\Sigma$ the scalar product of $\L^2(\Sigma)$.
The Euclidean scalar product of vectors $\boldsymbol{x},\boldsymbol{y} \in \R^3$ is denoted by $\boldsymbol{x} \cdot \boldsymbol{y}$.
In proofs, we use the symbol $\lesssim$ to abbreviate $\le$ up to some (hidden) multiplicative constant which is clear from the context and independent of the discretization parameters $h$ and $k$.
\subsection{Equivalent formulations of LLG and weak solution}
The dimensionless formulation of LLG that is usually referred to, has already been stated in~\eqref{eq:llg}.
Supplemented by the same initial and boundary conditions~\eqref{eq:llg:bc1}--\eqref{eq:llg:bc2}, the equation can also equivalently be stated \revision{as}
\begin{align}\label{eq:form:alg}
\alpha\partial_t\m + \m\times\partial_t\m
= \heff - \left(\m\cdot\heff\right)\m
\end{align}
\revision{or}
\begin{align}\label{eq:weaksol}
\partial_t\m - \alpha\m\times\partial_t\m
= \heff \times \m.
\end{align}
In this work, \eqref{eq:form:alg} is exploited for the construction of our
numerical scheme. For the notion of a weak solution,
we use the so-called Gilbert formulation~\eqref{eq:weaksol}. A rigorous proof for the equivalence of
the above equations can be found, e.g., in Ref.~\refcite{goldenits}, Section~1.2.
\par As far as numerical analysis is concerned, our integrator extends \revision{the one} of
Ref.~\refcite{alouges08} from the small-particle limit with exchange energy only, to the case under
consideration. Independently, the preceding works of Refs.~\refcite{alouges11}, \refcite{goldenits} generalized the
approach of Ref.~\refcite{alouges08} to an effective field, which consists of exchange energy, stray field energy, uniaxial anisotropy, and exterior energy, where only the first term is dealt with implicitly, whereas the remaining lower-order terms are treated explicitly. In this work, we extend this approach to certain non-linear contributions of the effective field. For this purpose, 
\revision{we introduce a general contribution 
$\operator:\H^1(\Omega_1)\times Y\to \L^2(\Omega_1)$ for some suitable Banach 
space $Y$, see Section~\ref{section:heff} for examples.}
We now write $\heff$ in the form
\begin{subequations}\label{se:multiscale}
\begin{align}\label{eq:field}
 \heff = \Cexchange\Delta\m - \operator(\m,\zeta) + \hext,
\end{align}
where $\zeta \in Y$, the exchange contribution and the exterior field $\hext$ are explicitly given, while the stray field contribution, the material anisotropy, and the induced field
from the macroscopic part are concluded in the operator \revision{$\operator$}. Our analysis thus particularly includes the case
\begin{align}\label{eq:pi}
 \revision{\operator\big(\m, \zeta\big)}
 := \nabla u_1 + \Canisotropy\,\revision{D\phi(\m)} +\nabla u_2,
\end{align}
but also holds true for general contributions $\operator$,
which only act on the spatial variable, as long as they fulfil \revision{the properties \eqref{assumption:chi:bounded}--\eqref{assumption:chi:convergence}} below. In~\eqref{eq:field}--\eqref{eq:pi}, the constants are given by
\begin{align}\label{eq:constants}
\Cexchange := \frac{2A}{\mu_0 M_s^2\revision{L^2}} \quad \text{ resp. } \quad \Canisotropy := \frac{K}{\mu_0 \revision{M_s}}.
\end{align}
\end{subequations}
\revision{\begin{remark}
For the multiscale formulation~\eqref{se:multiscale}, we employ 
\revision{$Y=\L^2(\Omega_2)$} 
and $\zeta=\hext$, since this data is required 
in~\eqref{eq:uext}--\eqref{eq:u:omega2}. Details are given in Section~\ref{sec:multi} below.
For the classical contributions like anisotropy field and stray field,
the operator $\operator$ is independent of $\zeta$ and depends only on $\m$.
\end{remark}}
\par With \revision{these preparations}, our \revision{definition} of a weak solution reads as follows:
\begin{definition}\label{def:weaksol}
\revision{Let $\hext\in\L^2(\Omega_1)$, $\zeta \in Y$ and $\m^0\in\H^1(\Omega_1)$ with $|m|=1$ in $\Omega_1$.}
A function $\m$ is called a \emph{weak solution} to LLG in $\revision{\Omega_T}$, if
\begin{itemize}
\item[(i)] $\m \in \H^1(\revision{\Omega_T})$ with $|\m| = 1$ in $\revision{\Omega_T}$ and $\m(0)=\m^0$ in the sense of traces;
\item[(ii)] for all $\pphi \in C^\infty(\revision{\overline\Omega_T})$, we have
\begin{align}\label{eq:weaksol_def}
&\dual{\partial_t\m}{\pphi}_{\Omega_T}
-\alpha\,\dual{\m \times \revision{\partial_t\m}}{\pphi}_{\revision{\Omega_T}}\\
&= -\Cexchange\,\dual{\nabla \m \times \m}{\nabla \pphi}_{\revision{\Omega_T}}
- \dual{\operator(\m, \zeta)\times \m}{\pphi}_{\revision{\Omega_T}}
+ \dual{\hext\times \m}{\pphi}_{\Omega_T};\nonumber
 \end{align}
\item[(iii)] for almost all $t \in (0, \revision{T})$, we have
\begin{align}\label{eq:weaksol:energy}
\norm{\nabla \m (t)}{L^2(\Omega_1)}^2 + \norm{\partial_t\m}{L^2(\Omega_t)}^2 \le C,
\end{align}
for some constant $C>0$ which depends only on $\m^0$ and $\hext$.
\end{itemize}
\end{definition}
The existence (and non-uniqueness) of weak solutions has first been
shown in Ref.~\refcite{as} for the small particle limit, where \revision{$\operator$}
and $\hext$ are omitted. We stress, however, that our convergence proof is
constructive in the sense that the analysis does not only show convergence
towards, but also existence of weak solutions without any assumptions on the smoothness of the quantities involved.
\begin{remark}
Under certain assumptions on \revision{$\operator$},
the energy estimate~\eqref{eq:weaksol:energy} can be improved. We refer to Proposition~\ref{lem:energy:improved} in the appendix.
\end{remark}
\subsection{Linear-implicit integrator}
We discretize the magnetization $\m$ and its time derivative $\v = \partial_t\m$ in space by lowest-order Courant finite elements
\begin{equation*}
 \VV_h := 
 \set{\n_h:\overline\Omega_1\to\R^3\text{ continuous}}{\n_h|_T\text{ affine for all }T\in \TT_h^{\Omega_1}},
\end{equation*}
where $\TT_h^{\Omega_1}$ is a \revision{quasi-uniform and} conforming triangulation of $\Omega_1$ into 
tetrahedra $T\in\TT_h^{\Omega_1}$ with mesh-size \revision{$h\simeq{\rm diam}(T)$}. Let $\NN_h$ denote the set of nodes of $\TT_h^{\Omega_1}$. For fixed time $t_j$, the discrete magnetization is sought in the set
\begin{align*}
 \m(t_j) \approx \m_h^j \in
 \MM_h := \set{\n_h\in\VV_h}{|\n_h(z)|=1\text{ for all }z\in\NN_h},
\end{align*}
whereas the discrete time derivative is sought in the discrete tangent space
\begin{align*}
 \v(t_j) \approx \v_h^j \in
 \KK_{\m_h^j} := \set{\n_h\in\VV_h}{\n_h(z)\cdot\m_h^j(z)=0\text{ for all }z\in\NN_h}.
\end{align*}
For the time discretization, we impose a uniform partition $\II_k$ of the time interval $[0,T]$ with time step-size $k=T/N$ and time steps $t_j = jk$, $j=0,\dots,N$.
\par Let $\operator_h$ be a numerical realization \revision{of $\operator$}
which maps $\m(t_j)\approx\m_h^j\in\MM_h$ and $\zeta(t_j) \approx \zeta_h^j\in Y$ to some
$\operator_h(\m_h^j, \zeta_h^j)\in\L^2(\Omega_1)$.
Finally, let \revision{$\hext_h^j\in\L^2(\Omega_1)$} be an approximation of $\hext(t_j)$ specified below. Then, our numerical time integrator reads as follows:
\begin{algorithm}\label{algorithm}
Input: Initial datum $\m_h^0\in\MM_h$, parameters $\alpha>0$ and $0 \leq\theta\leq 1$, data $\left\{\zeta_h^i\right\}_{i=0,\dots,N-1}$.
Then, for all $i=0,\dots,N-1$ iterate:
\begin{itemize}
\item[(i)] Compute $\v_h^i\in\KK_{\m_h^i}$ such that for all $\ppsi_h\in\KK_{\m_h^i}$, it holds
\begin{align}\label{eq:alg}
 &\alpha\dual{\v_h^i}{\ppsi_h}_{\Omega_1}
 + \Cexchange k \theta \,\dual{\nabla\v_h^i}{\nabla\ppsi_h}_{\Omega_1}
 + \dual{\m_h^i\times\v_h^i}{\ppsi_h}_{\Omega_1}
 \\&\quad
 = -\Cexchange \dual{\nabla\m_h^i}{\nabla \ppsi_h}_{\Omega_1}
 - \dual{\operator_h(\m_h^i,\zeta_h^i)}{\ppsi_h}_{\Omega_1}
 + \dual{\hext_h^i}{\ppsi_h}_{\Omega_1}. \nonumber
\end{align}
\item[(ii)] Define $\m_h^{i+1}\in\MM_h$ by
$\m_h^{i+1}(z) = \displaystyle\frac{\m_h^i(z) + k \v_h^i(z)}{|\m_h^i(z) + k \v_h^i(z)|}$ for all nodes $z\in\NN_h$.
\end{itemize}
Output: Discrete time derivatives $\v_h^i$ and magnetizations $\m_h^{i+1}$, for $i=0,\dots,N-1$.
\end{algorithm}
\revision{The input as well as the output of Algorithm~\ref{algorithm} consists of discrete-in-time values $\gamma_h^i$,
e.g., $\gamma_h^i\in\{\m_h^i,\v_h^i\}\subseteq\VV_h$.
By~\eqref{eq:timeapprox} we define continuous-in-time interpretations, where we consider continuous and 
piecewise affine in time (denoted by $\SS^1$) resp.\ piecewise constant in time 
(denoted by $\PP^0$): For $t_i \le t < t_{i+1}$,
$\gamma_{hk}\in\SS^1(\II_k;\VV_h)\subset\H^1(\revision{\Omega_T})$ 
and  $\gamma_{hk}^-\in\PP^0(\II_k; \VV_h)\subset L^2(\H^1)$ are defined by
\begin{subequations} \label{eq:timeapprox}
\begin{align}
\gamma_{hk}(t) &:= \frac{t-ik}{k}\,\gamma_h^{i+1} + \frac{(i+1)k-t}{k}\,\gamma_h^i
\label{eq:mhk}\\
\gamma_{hk}^-(t) &:= \gamma_h^i
\label{eq:mhk-}.
\end{align}
\end{subequations}
We note that $\partial_t\gamma_{hk}=(\gamma_h^{i+1}-\gamma_h^i)/k$.
The same notation is used for $\hext_{hk}^-\in\PP^0(\II_k;\L^2(\Omega))$ and 
$\zeta_{hk}^-\in\PP^0(\II_k;Y)$.}
\revision{\begin{lemma}
Algorithm~\ref{algorithm} is well-defined, and it holds $\norm{\m_{hk}}{\L^\infty(\revision{\Omega_T})} = \norm{\m_{hk}^-}{\L^\infty(\revision{\Omega_T})} = 1$.
\end{lemma}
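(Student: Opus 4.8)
The plan is to prove both statements simultaneously by induction on the time-step index $i=0,\dots,N-1$, the induction hypothesis being that $\m_h^i\in\MM_h$; this holds for $i=0$ by the assumption on the input datum. Fix such an $i$ and consider step~(i). Equation~\eqref{eq:alg} is a square linear system on the finite-dimensional space $\KK_{\m_h^i}$, so it suffices to check uniqueness, i.e.\ that the bilinear form $(\v_h,\ppsi_h)\mapsto\alpha\dual{\v_h}{\ppsi_h}_{\Omega_1}+\Cexchange k\theta\dual{\nabla\v_h}{\nabla\ppsi_h}_{\Omega_1}+\dual{\m_h^i\times\v_h}{\ppsi_h}_{\Omega_1}$ is positive definite on $\KK_{\m_h^i}$. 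Testing with $\ppsi_h=\v_h$, the term $\dual{\m_h^i\times\v_h}{\v_h}_{\Omega_1}$ vanishes because $\m_h^i\times\v_h$ is pointwise orthogonal to $\v_h$, leaving $\alpha\norm{\v_h}{\L^2(\Omega_1)}^2+\Cexchange k\theta\norm{\nabla\v_h}{\L^2(\Omega_1)}^2\ge\alpha\norm{\v_h}{\L^2(\Omega_1)}^2$. Since $\alpha>0$, this is strictly positive for $\v_h\neq0$, so \eqref{eq:alg} has a unique solution $\v_h^i\in\KK_{\m_h^i}$.

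Next I would verify that the update in step~(ii) is admissible. Because $\v_h^i\in\KK_{\m_h^i}$, we have $\v_h^i(z)\cdot\m_h^i(z)=0$ for every node $z\in\NN_h$, while $|\m_h^i(z)|=1$ since $\m_h^i\in\MM_h$. Hence, by the Pythagorean identity, $|\m_h^i(z)+k\v_h^i(z)|^2=1+k^2|\v_h^i(z)|^2\ge1$, so the denominator in step~(ii) never vanishes, $\m_h^{i+1}$ is well-defined and lies in $\VV_h$ with $|\m_h^{i+1}(z)|=1$ at all nodes, i.e.\ $\m_h^{i+1}\in\MM_h$. This closes the induction and shows that Algorithm~\ref{algorithm} is well-defined.

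For the norm identities I would use the elementary observation that the nodal constraint propagates to a pointwise bound: if $\n_h\in\VV_h$ satisfies $|\n_h(z)|\le1$ at all nodes, then on every tetrahedron $T\in\TT_h^{\Omega_1}$ the value $\n_h(x)$ is a convex combination of the (at most four) vertex values, whence $|\n_h(x)|\le1$ for all $x\in\overline\Omega_1$ by the triangle inequality. Applied to $\m_h^i$, which attains modulus $1$ at every node, this gives $\norm{\m_h^i}{\L^\infty(\Omega_1)}=1$ for all $i$, and therefore $\norm{\m_{hk}^-}{\L^\infty(\Omega_T)}=1$. For $\m_{hk}$, at a fixed time $t\in[t_i,t_{i+1}]$ the function $\m_{hk}(t)$ is a convex combination of $\m_h^i$ and $\m_h^{i+1}$, so $|\m_{hk}(t,x)|\le1$ pointwise; since $\m_{hk}$ is continuous on $\overline\Omega_T$ and $|\m_{hk}(t_i,z)|=|\m_h^i(z)|=1$ at every node, the value $1$ is attained, and $\norm{\m_{hk}}{\L^\infty(\Omega_T)}=1$ follows.

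I do not expect a genuine obstacle here; the argument is short. The two points that deserve care are the orthogonality identity in step~(ii) — it is precisely $\v_h^i(z)\perp\m_h^i(z)$ that keeps $|\m_h^i(z)+k\v_h^i(z)|$ bounded away from zero and makes the explicit nodal normalization legitimate — and the convexity remark that upgrades the nodal bound $|\n_h(z)|\le1$ to the pointwise bound $|\n_h|\le1$ on all of $\overline\Omega_1$, which is needed because $\MM_h$ constrains only the nodal values of its elements.
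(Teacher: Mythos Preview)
Your proof is correct and follows essentially the same approach as the paper: positive definiteness of the bilinear form (via the vanishing of the cross-product term), the nodal Pythagorean identity to justify step~(ii), and the barycentric/convex-combination argument for the pointwise bound. You supply more detail than the paper does---in particular you spell out why the value~$1$ is actually attained, which the paper's proof leaves implicit---but the structure is identical.
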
}
\begin{proof}
Problem~\eqref{eq:alg} is a linear problem on a finite dimensional space.
Therefore, existence and uniqueness of $\v_h^i\in\KK_{\m_h^i}$ follow from the fact that the corresponding bilinear form is positive definite.
By definition of the discrete tangent space \revision{$\KK_{\m_h^i}$}, it holds
$|\m_h^i + k\v_h^i|^2 = 1 + k^2\,|\v_h^i|^2\ge1$ nodewise.
Therefore, \revision{Step~(ii) in Algorithm~\ref{algorithm}} is well-defined. 
By use of barycentric coordinates, an elementary calculation finally proves the pointwise estimates
$|\m_{hk}^-|\le1$ as well as $|\m_{hk}| \le 1$, see, e.g., Ref.~\refcite{alouges08}.
\end{proof}
By definition of $\m_h^{i+1}$ in Step~(ii) of Algorithm~\ref{algorithm}, the
following two auxiliary results follow from elementary geometric considerations
(see Refs.~\refcite{alouges08}, \refcite{alouges11}, \refcite{goldenits}).
\revision{\begin{lemma}\label{lemma1:aux}
For all $i=0,\dots,N-1$, it holds nodewise $|\m_h^{i+1}-\m_h^i| \le k\,|\v_h^i|$.\hfill\qed
\end{lemma}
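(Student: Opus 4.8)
The plan is to argue nodewise, so fix an arbitrary node $z\in\NN_h$ and write $\aa := \m_h^i(z)$ and $\bb := \v_h^i(z)$, so that $|\aa|=1$ and $\aa\cdot\bb=0$ by definition of $\MM_h$ and $\KK_{\m_h^i}$. By Step~(ii) of Algorithm~\ref{algorithm},
\begin{align*}
\m_h^{i+1}(z) - \m_h^i(z) = \frac{\aa + k\bb}{|\aa+k\bb|} - \aa,
\end{align*}
and from the orthogonality $\aa\cdot\bb=0$ we get $|\aa+k\bb|^2 = 1 + k^2|\bb|^2$, hence $|\aa+k\bb| = \sqrt{1+k^2|\bb|^2} \ge 1$. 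The goal is therefore to show $\big|\tfrac{\aa+k\bb}{|\aa+k\bb|} - \aa\big| \le k|\bb|$.

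The key computation is to expand the squared norm of the left-hand side. Writing $\lambda := |\aa+k\bb| = \sqrt{1+k^2|\bb|^2}$ and using $|\aa|=1$, $\aa\cdot(\aa+k\bb) = 1$, one finds
\begin{align*}
\Big|\frac{\aa+k\bb}{\lambda} - \aa\Big|^2
= \frac{|\aa+k\bb|^2}{\lambda^2} - \frac{2\,\aa\cdot(\aa+k\bb)}{\lambda} + |\aa|^2
= 1 - \frac{2}{\lambda} + 1
= 2 - \frac{2}{\lambda}.
\end{align*}
Since $\lambda = \sqrt{1+k^2|\bb|^2}$, it remains to check the elementary scalar inequality $2 - 2/\sqrt{1+s} \le s$ for all $s = k^2|\bb|^2 \ge 0$; equivalently $2\sqrt{1+s} \le (2+s)\,$, which upon squaring becomes $4(1+s) \le 4 + 4s + s^2$, i.e. $0 \le s^2$. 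This holds trivially, so $\big|\m_h^{i+1}(z)-\m_h^i(z)\big|^2 \le s = k^2|\bb|^2 = k^2|\v_h^i(z)|^2$, and taking square roots gives the claim at the node $z$. Since $z\in\NN_h$ was arbitrary, the nodewise estimate follows.

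There is essentially no obstacle here: the statement reduces, after using the orthogonality relation $\m_h^i(z)\cdot\v_h^i(z)=0$ built into the discrete tangent space, to the one-line scalar inequality $0\le s^2$. The only point requiring a word of care is that the computation is purely nodewise — the inequality is asserted and used at the vertices of $\TT_h^{\Omega_1}$, not pointwise on all of $\Omega_1$ — which is exactly the form in which it is stated and in which the subsequent arguments (e.g. bounding $\|\m_{hk}^{i+1}-\m_{hk}^i\|$ in various norms via the nodal basis) will invoke it.
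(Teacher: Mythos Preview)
Your proof is correct; the paper itself does not give a proof of this lemma but only states that it ``follows from elementary geometric considerations'' and cites earlier references, so your explicit nodewise computation is precisely what is intended. One small quibble: the step ``equivalently $2\sqrt{1+s}\le 2+s$'' is not literally an equivalence (multiplying $2-2/\sqrt{1+s}\le s$ by $\sqrt{1+s}$ gives $2\sqrt{1+s}\le 2+s\sqrt{1+s}$), but since $s\sqrt{1+s}\ge s$ the inequality you prove is stronger and therefore sufficient, so the argument goes through unchanged.
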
}
\revision{\begin{lemma}\label{lemma2:aux}
For all $i=0,\dots,N-1$, it holds nodewise $|\m_h^{i+1}-\m_h^i-k\v_h^i| \le \frac12\,k^2\,|\v_h^i|^2$.\hfill\qed
\end{lemma}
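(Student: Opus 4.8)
The plan is to argue nodewise and to reduce the claim to an elementary one-variable inequality. Fix $i\in\{0,\dots,N-1\}$ and a node $z\in\NN_h$, and abbreviate $\aa:=\m_h^i(z)\in\R^3$ and $\bb:=\v_h^i(z)\in\R^3$. The two structural facts I would use are $|\aa|=1$ (since $\m_h^i\in\MM_h$) and $\aa\cdot\bb=0$ (since $\v_h^i\in\KK_{\m_h^i}$). Together these give
\begin{align*}
 |\aa+k\bb|^2 = |\aa|^2 + 2k\,\aa\cdot\bb + k^2|\bb|^2 = 1 + k^2|\bb|^2,
\end{align*}
so that $|\aa+k\bb|=\sqrt{1+k^2|\bb|^2}\ge1$; in particular the denominator in Step~(ii) of Algorithm~\ref{algorithm} never vanishes.

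Next I would rewrite the quantity to be estimated. By definition of $\m_h^{i+1}$ in Step~(ii) of Algorithm~\ref{algorithm},
\begin{align*}
 \m_h^{i+1}(z) - \aa - k\bb
 = (\aa+k\bb)\left(\frac{1}{|\aa+k\bb|} - 1\right),
\end{align*}
and taking Euclidean norms, together with $|\aa+k\bb|\ge1$, yields
\begin{align*}
 |\m_h^{i+1}(z) - \aa - k\bb|
 = \big|\,|\aa+k\bb| - 1\,\big|
 = \sqrt{1+k^2|\bb|^2} - 1.
\end{align*}

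Finally I would reduce to a scalar estimate: with $s:=k^2|\bb|^2\ge0$, the asserted bound $|\m_h^{i+1}(z)-\aa-k\bb|\le\tfrac12 k^2|\bb|^2$ is exactly $\sqrt{1+s}\le 1+\tfrac{s}{2}$, which follows by squaring the two nonnegative sides, since $1+s\le 1+s+\tfrac{s^2}{4}$. As $z\in\NN_h$ was arbitrary, this proves the nodewise estimate. I do not expect any genuine obstacle here: the only nontrivial ingredient is the nodewise orthogonality $\m_h^i(z)\cdot\v_h^i(z)=0$ encoded in $\KK_{\m_h^i}$, and the same short computation (estimating $2-2/\sqrt{1+s}\le s$ for $|\m_h^{i+1}(z)-\aa|^2$) simultaneously delivers Lemma~\ref{lemma1:aux}.
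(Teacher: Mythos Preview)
Your proof is correct and is exactly the ``elementary geometric consideration'' the paper alludes to; note that the paper does not spell out a proof of this lemma but simply cites Refs.~\refcite{alouges08}, \refcite{alouges11}, \refcite{goldenits} for the nodewise argument you carried out.
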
}
These nodal estimates shall be used together with the following elementary lemma which follows from standard scaling arguments.
\revision{\begin{lemma}\label{lemma3:aux}
For any discrete function ${\bf w}_h\in\VV_h$ and all $1\le p<\infty$, it holds
\begin{align*}
  \c{shape}^{-1}\,\norm{{\bf w}_h}{\L^p(\Omega)}^p
  \le h^3\sum_{z\in\NN_h}|{\bf w}_h(z)|^p
 \le \c{shape}\,\norm{{\bf w}_h}{\L^p(\Omega)}^p.
\end{align*}
The constant $\setc{shape}>0$ depends only on $p$ and the shape of the
elements in $\TT_h^{\Omega_1}$.\hfill\qed
\end{lemma}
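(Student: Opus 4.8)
# Proof Proposal for Lemma~\ref{lemma3:aux}

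\textbf{The plan is to} reduce the global $L^p$-estimate to a reference-element computation via the standard affine scaling argument, exploiting that $\VV_h$ consists of piecewise affine functions and that $\TT_h^{\Omega_1}$ is quasi-uniform. For a single tetrahedron $T\in\TT_h^{\Omega_1}$ with vertices $z_0,\dots,z_3\in\NN_h$, any ${\bf w}_h\in\VV_h$ restricted to $T$ is the affine interpolant of its four nodal values. Pulling back to the reference tetrahedron $\widehat T$ through the affine diffeomorphism $F_T:\widehat T\to T$, the function $\widehat{\bf w}:={\bf w}_h\circ F_T$ is affine on $\widehat T$ and takes the same four nodal values. Since the space of affine vector fields on $\widehat T$ is finite-dimensional, the two quantities $\widehat{\bf w}\mapsto\norm{\widehat{\bf w}}{\L^p(\widehat T)}^p$ and $\widehat{\bf w}\mapsto\sum_{j=0}^3|\widehat{\bf w}(z_j)|^p$ are both norms (to the $p$-th power) on this space (the second is a norm because a nonzero affine function cannot vanish at all four vertices of a nondegenerate simplex), hence equivalent with constants depending only on $p$.

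\textbf{Next I would} transport this local equivalence back to $T$. The change of variables formula gives $\norm{{\bf w}_h}{\L^p(T)}^p = |\det DF_T|\,\norm{\widehat{\bf w}}{\L^p(\widehat T)}^p$, and $|\det DF_T|\simeq |T|\simeq h^3$ by quasi-uniformity (the hidden constants depending only on the shape regularity of the elements). Combining with the reference equivalence yields
\begin{align*}
  \norm{{\bf w}_h}{\L^p(T)}^p \simeq h^3\sum_{j=0}^3|{\bf w}_h(z_j)|^p
  \qquad\text{for each }T\in\TT_h^{\Omega_1},
\end{align*}
with constants depending only on $p$ and the shape of the elements. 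Summing over all $T\in\TT_h^{\Omega_1}$ gives $\norm{{\bf w}_h}{\L^p(\Omega)}^p\simeq h^3\sum_{T}\sum_{j}|{\bf w}_h(z_j)|^p$. Finally, the double sum on the right differs from $\sum_{z\in\NN_h}|{\bf w}_h(z)|^p$ only by the multiplicity with which each node is counted, namely the number of elements sharing it; by shape regularity this multiplicity is bounded above by an absolute constant (and is of course at least $1$), which absorbs into $\c{shape}$.

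\textbf{The main obstacle}, such as it is, is really just bookkeeping rather than a genuine difficulty: one must be careful that all constants introduced—the reference-element norm equivalence constant, the comparison $|\det DF_T|\simeq h^3$, and the node-multiplicity bound—depend only on $p$ and on the shape regularity of $\TT_h^{\Omega_1}$, and not on $h$ itself. Quasi-uniformity is what guarantees the uniform comparison $|T|\simeq h^3$ across \emph{all} elements simultaneously (a merely shape-regular family would only give $|T|\simeq \operatorname{diam}(T)^3$ element-by-element). Everything else is the textbook affine-scaling machinery, and the claim follows with a single constant $\c{shape}$ as stated.
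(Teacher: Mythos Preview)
Your argument is correct and is precisely the ``standard scaling argument'' the paper alludes to; note that the paper does not actually give a proof of this lemma (it is stated with a \qed\ and the remark that it ``follows from standard scaling arguments''). Your reference-element norm equivalence, affine pullback with $|\det DF_T|\simeq h^3$, and node-multiplicity bound constitute exactly that standard argument.
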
}
\subsection{Main theorem}
The following theorem is the main result of this work.
It states convergence of the numerical integrator (at least for a subsequence) towards a weak solution of the general LLG equation.
Afterwards, we will show that the operator $\operator$ and its discretization $\operator_h$ of the multiscale LLG equation satisfy the general assumptions posed.
In particular, the concrete problem is thus covered by the general approach.
\begin{theorem}\label{theorem}
\textbf{(a)}
Let \revision{$1/2<\theta\le1$} and suppose that the spatial meshes $\TT_h^{\Omega_1}$ are uniformly shape regular
and satisfy the angle condition
\begin{align}\label{assumption:mesh}
 \dual{\nabla\eta_i}{\nabla\eta_j}_{\Omega_1}
 \le0
 \quad\text{for \revision{all nodal hat} functions }
 \eta_i,\eta_j\in\SS^1(\TT_h^{\Omega_1})
 \text{ with }i\neq j.
\end{align}
We suppose that
\begin{align}\label{assumption:f}
 \hext_{hk}^- \weakto \hext\text{ weakly in }\L^2(\revision{\Omega_T})
\end{align}
as well as
\begin{align}\label{assumption:m0}
 \revision{\m_h^0 \weakto \m^0\text{ weakly in }\H^1(\Omega_1).}
\end{align}
Moreover, we suppose that the spatial discretization $\revision{\operator_h}$
of \revision{$\operator$} satisfies
\begin{align}\label{assumption:chi:bounded}
 \norm{\operator_h(\n, y)}{\L^2(\Omega_1)}
 \le \c{bounded}\,\revision{(1+\norm{\nabla\n}{L^2(\Omega_1)})}
\end{align}
for all $h,k>0$ and all $\n \in \revision{\H^1(\Omega_1)}$ with $|\n| \le 1$ and \revision{all} $y \in Y$ with $\norm{y}{Y} \le \c{boundedy}$ for some $y$-independent constant $\c{boundedy}>0$.
Here, $\setc{bounded}>0$ denotes a constant that is independent of $h,k, \n,$ and $y$, but may depend on $\c{boundedy}$ and $\Omega_1$. We further assume $\norm{\zeta_h^j}{Y} \le \setc{boundedy}$ for all $j = 1, \hdots, N$. Under these assumptions, \revision{Algorithm~\ref{algorithm} yields} \revision{strong $\L^2(\Omega_T)$-convergence
of some subsequence of $\m_{hk}^-$ as well as weak $\H^1(\Omega_T)$-convergence 
of some subsequence of $\m_{hk}$ towards the same limit $\m\in \H^1(\Omega_T)$
which additionally satisfies $\m\in L^\infty(\H^1)$ with $|\m|=1$ in $\Omega_T$.}\\
\par \noindent
\textbf{(b)}
In addition to the above, we suppose
\begin{align}\label{assumption:chi:convergence}
 \operator_h(\m_{hk}^-, \zeta_{hk}^-) \weakto \operator(\m, \zeta)
 \quad\text{weakly in }\revision{\L^2(\Omega_T)} \text{ \revision{for some subsequence}}.
\end{align}
Then, the \revision{limit $\m\in\H^1(\Omega_T)$ from (a) is 
a weak solution} of general LLG
\revision{in the sense of Definition~\ref{def:weaksol}.}
\end{theorem}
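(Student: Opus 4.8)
The plan is to run the classical Alouges-type convergence argument, now carried through with the general operator $\operator$ in place of the concrete lower-order terms. First I would derive a discrete energy estimate: testing~\eqref{eq:alg} with $\ppsi_h = \v_h^i \in \KK_{\m_h^i}$ kills the cross-product term (since $\dual{\m_h^i\times\v_h^i}{\v_h^i}_{\Omega_1}=0$), leaves $\alpha\norm{\v_h^i}{\L^2(\Omega_1)}^2 + \Cexchange k\theta\norm{\nabla\v_h^i}{\L^2(\Omega_1)}^2$ on the left, and on the right the term $-\Cexchange\dual{\nabla\m_h^i}{\nabla\v_h^i}_{\Omega_1}$ plus the $\operator_h$ and $\hext_h^i$ contributions. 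The exchange term on the right is handled via $\dual{\nabla\m_h^i}{\nabla\v_h^i}_{\Omega_1} = \tfrac{1}{2k}\big(\norm{\nabla\m_h^{i+1}}{\L^2}^2 - \norm{\nabla\m_h^i}{\L^2}^2\big)$ up to a remainder controlled with the angle condition~\eqref{assumption:mesh} and Lemma~\ref{lemma2:aux} (here is where $\theta>1/2$ is needed to absorb the $k^2$ term); the $\operator_h$ and $\hext_h^i$ terms are bounded by Young's inequality using~\eqref{assumption:chi:bounded} and the assumed uniform bound on $\norm{\zeta_h^i}{Y}$ and $\norm{\hext_h^-}{\L^2}$. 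Summing over $i$ and using a discrete Gronwall argument yields, uniformly in $h,k$, bounds on $\max_j\norm{\nabla\m_h^j}{\L^2(\Omega_1)}$ and on $k\sum_i\norm{\v_h^i}{\H^1(\Omega_1)}^2$, hence $\norm{\m_{hk}}{L^\infty(\H^1)} + \norm{\partial_t\m_{hk}}{L^2(\L^2)} + \norm{\m_{hk}^-}{L^\infty(\H^1)}\lesssim 1$, plus $\norm{\m_{hk}-\m_{hk}^-}{L^2(\H^1)}\to 0$.

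Next I would extract, by Banach--Alaoglu and Rellich, a subsequence along which $\m_{hk}\weakto\m$ in $\H^1(\Omega_T)$, $\m_{hk}\to\m$ and $\m_{hk}^-\to\m$ strongly in $\L^2(\Omega_T)$ (Aubin--Lions for the former, and since the time-interpolation difference vanishes in $L^2(\H^1)$ the piecewise-constant version shares the same limit). The $L^\infty(\H^1)$ bound passes to the limit by weak-$*$ closedness, and the nodewise constraint $|\m_{hk}(z)|\le 1$ with $|\m_{hk}^-(z)|=1$ at nodes combined with Lemma~\ref{lemma3:aux} gives $|\m|=1$ a.e.\ in $\Omega_T$ in the limit; $\m(0)=\m^0$ follows from~\eqref{assumption:m0} and the trace theorem. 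This establishes part (a).

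For part (b), I would pass to the limit in the variational identity~\eqref{eq:alg}. Given a test function $\pphi\in C^\infty(\overline\Omega_T)$, one uses $\ppsi_h = \II_h\big((\m_h^i\times\pphi(t_i))\times\m_h^i\big)$ or, more simply, the nodal interpolant of $\pphi(t_i)-(\pphi(t_i)\cdot\m_h^i)\m_h^i$, which lies in $\KK_{\m_h^i}$; one then multiplies~\eqref{eq:alg} by $k$, sums over $i$, and rewrites everything in terms of $\m_{hk},\m_{hk}^-,\v_{hk}^-=\partial_t\m_{hk}$. The interpolation error $\|\pphi(t_i)-\ppsi_h\|$ and the difference between $\m_{hk}$ and $\m_{hk}^-$ are driven to zero using the $\H^1$ bounds, standard interpolation estimates on quasi-uniform meshes, and the strong $\L^2$-convergence of $\m_{hk}^-$; the term $\dual{\operator_h(\m_{hk}^-,\zeta_{hk}^-)}{\cdot}$ converges by the weak convergence~\eqref{assumption:chi:convergence} tested against the strongly convergent factor involving $\m_{hk}^-$, and $\dual{\hext_h^-}{\cdot}$ by~\eqref{assumption:f}. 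Identifying the cross-product terms (e.g.\ $\dual{\m_h^i\times\v_h^i}{\ppsi_h}\to -\alpha\dual{\m\times\partial_t\m}{\pphi}$ after the algebraic manipulation $(\aa\times\bb)\cdot((\bb\times\cc)\times\bb)$, etc.) yields exactly~\eqref{eq:weaksol_def}. Finally the energy inequality~\eqref{eq:weaksol:energy} follows from the discrete energy bound by weak lower semicontinuity of the norms. The main obstacle I anticipate is the limit passage in the nonlinear terms: one must combine the \emph{weak} $\L^2$-convergence from~\eqref{assumption:chi:convergence} with \emph{strong} $\L^2$-convergence of the $\m_{hk}^-$ factor it is paired against, and simultaneously control the fact that the test functions $\ppsi_h$ themselves depend on $\m_h^i$ (so they converge only strongly in $\L^2$, not $\H^1$), which forces a careful splitting so that every weakly convergent factor meets a strongly convergent partner; handling the exchange term $\dual{\nabla\m_{hk}^-}{\nabla\ppsi_h}$ this way, via the rewriting $\dual{\nabla\m_{hk}^-}{\nabla(\m_{hk}^-\times\pphi)}=-\dual{\nabla\m_{hk}^-\times\m_{hk}^-}{\nabla\pphi}$, is the delicate point.
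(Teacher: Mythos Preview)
Your overall strategy matches the paper's proof, and you correctly identify all the main steps (energy estimate via $\ppsi_h=\v_h^i$, angle condition, discrete Gronwall, compactness, limit passage). Two points deserve correction.

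First, the energy estimate does \emph{not} yield a bound on $k\sum_i\norm{\v_h^i}{\H^1(\Omega_1)}^2$: you only obtain $k\sum_i\norm{\v_h^i}{\L^2(\Omega_1)}^2\lesssim 1$ and $(\theta-\tfrac12)k^2\sum_i\norm{\nabla\v_h^i}{\L^2(\Omega_1)}^2\lesssim 1$, i.e.\ merely $\sqrt{k}\,\norm{\nabla\v_{hk}^-}{\L^2(\Omega_T)}\lesssim 1$. Accordingly, the restriction $\theta>1/2$ is not used to ``absorb the $k^2$ term'' in the energy estimate (there $\theta\ge 1/2$ already keeps it on the left with the correct sign) but rather in the limit passage, to guarantee $k\theta\int_0^T\dual{\nabla\v_{hk}^-}{\nabla\ppsi_h}_{\Omega_1}\to 0$. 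Relatedly, $\norm{\m_{hk}-\m_{hk}^-}{L^2(\H^1)}\to 0$ is neither proved nor needed; only the $\L^2(\Omega_T)$ difference vanishes, and the weak $L^2(\H^1)$ limits coincide by uniqueness.

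Second, and more importantly, your choice of test function in part~(b) is internally inconsistent. You propose $\ppsi_h=\II_h\big((\m_h^i\times\pphi)\times\m_h^i\big)$ (equivalently, the nodal tangential projection of $\pphi$), yet the exchange-term rewriting you invoke at the end,
\[
\dual{\nabla\m_{hk}^-}{\nabla(\m_{hk}^-\times\pphi)}_{\Omega_1}=\dual{\nabla\m_{hk}^-}{\m_{hk}^-\times\nabla\pphi}_{\Omega_1},
\]
holds for the \emph{different} test function $\ppsi_h=\II_h(\m_{hk}^-\times\pphi)$, which is exactly what the paper uses. With the tangential-projection choice, expanding $\nabla\big((\m\times\pphi)\times\m\big)$ produces a contribution $(\pphi\cdot\m)\,|\nabla\m|^2$ in the exchange term; this is quadratic in $\nabla\m_{hk}^-$ and cannot be passed to the limit with only weak $L^2(\H^1)$-convergence of $\m_{hk}^-$. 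The remedy is simply to take $\ppsi_h=\II_h(\m_{hk}^-\times\pphi)$ from the outset; then the identities $(\m\times\partial_t\m)\cdot(\m\times\pphi)=\partial_t\m\cdot\pphi$ and $\partial_t\m\cdot(\m\times\pphi)=-(\m\times\partial_t\m)\cdot\pphi$ deliver~\eqref{eq:weaksol_def} directly, and the exchange term has precisely the weak-times-strong structure you need.
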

\begin{remark} \label{rem:f}
{\rm(i)} Suppose that the applied exterior field is continuous in time, i.e., $\hext\in C([0,T];\L^2(\Omega_1))$. Let $\hext_h^j = \hext(t_j)$ denote the evaluation of $\hext$ at time $t_j$. Then, assumption~\eqref{assumption:f} is satisfied since $\hext_{hk}^-\to\hext$ strongly in $L^\infty(\L^2)$.\\
{\rm(ii)} Suppose that the applied exterior field is continuous in space-time, i.e., $\hext\in C(\revision{\overline\Omega_T})$. Let $\hext_h^j$ denote the nodal interpolant of $\hext(t_j)\in C(\overline\Omega_1)$ in space. Then, assumption~\eqref{assumption:f} is satisfied since $\hext_{hk}^-\to\hext$ strongly in $\L^\infty(\revision{\Omega_T})$.\\
{\rm(iii)} Suppose $\zeta$ is continuous in time, i.e., \revision{$\zeta \in C([0,T], Y)$} and let $\zeta_h^j = \zeta(t_j)$ denote the evaluation of $\zeta$ at time $t_j$. Then, we have $\zeta_{hk}^- \to \zeta$ strongly in $L^\infty(Y)$ and $\norm{\zeta_h^j}{Y} \le \sup_{t \in [0,T]} \norm{\zeta(t)}{Y}$.
\end{remark}
\begin{remark}
The angle condition~\eqref{assumption:mesh} is a technical ingredient for 
the convergence analysis. It is automatically fulfilled for tetrahedral meshes 
with dihedral angles that are smaller than $\pi/2$. If the
condition is satisfied by the initial mesh $\TT_0$, \revision{it can be preserved
by the mesh-refinement strategy (see, e.g., Ref.~\refcite{verfuerth}, Section~4.1).}
%
\end{remark} 
\revision{The remainder of this section consists of the proof of
Theorem~\ref{theorem} which is roughly split into three steps:}
\begin{itemize}
\item[(i)] Boundedness of the discrete quantities and energies.
\item[(ii)] Existence of weakly convergent subsequences.
\item[(iii)] Identification of the limits with weak solutions of LLG.
\end{itemize}
\begin{lemma}\label{lem:energy}
\revision{For all $j=0,\dots,N$, the}
discrete quantities $\m_h^j$ and $\left\{\v_h^i\right\}_{i=0,\dots,j-1}$ satisfy
\begin{align}\label{eq:energy_discrete}
\begin{split}
\norm{\nabla \m_h^j}{\L^2(\Omega_1)}^2 &+ 
k\sum_{i=0}^{j-1}\norm{\v_h^i}{\L^2(\Omega_1)}^2 
+ (\theta - 1/2)k^2
\sum_{i=0}^{j-1} \norm{\nabla \v_h^i}{\L^2(\Omega_1)}^2 
\le\revision{\c{energy}}.
\end{split}
\end{align}
\revision{The constant $\setc{energy} > 0$ depends only on $\hext$, $\m^0$, 
and the final time $T$, but is independent of $h$ and $k$.}
\end{lemma}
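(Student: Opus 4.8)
The plan is to test the variational equation~\eqref{eq:alg} with the natural choice $\ppsi_h = \v_h^i$, which is admissible since $\v_h^i\in\KK_{\m_h^i}$, and then to sum over $i=0,\dots,j-1$ to telescope the exchange term. First I would observe that the cross-product term vanishes, $\dual{\m_h^i\times\v_h^i}{\v_h^i}_{\Omega_1}=0$, so that~\eqref{eq:alg} collapses to
\begin{align*}
 \alpha\norm{\v_h^i}{\L^2(\Omega_1)}^2 + \Cexchange k\theta\,\norm{\nabla\v_h^i}{\L^2(\Omega_1)}^2
 = -\Cexchange\dual{\nabla\m_h^i}{\nabla\v_h^i}_{\Omega_1}
 - \dual{\operator_h(\m_h^i,\zeta_h^i)}{\v_h^i}_{\Omega_1}
 + \dual{\hext_h^i}{\v_h^i}_{\Omega_1}.
\end{align*}
The key algebraic identity is that, by Step~(ii) of the algorithm together with the angle condition~\eqref{assumption:mesh} (which guarantees $\norm{\nabla\m_h^{i+1}}{}\le\norm{\nabla\widetilde\m_h^{i+1}}{}$ for the normalization $\widetilde\m_h^{i+1}=\m_h^i+k\v_h^i$), one controls $\norm{\nabla\m_h^{i+1}}{\L^2(\Omega_1)}^2$ from above by $\norm{\nabla(\m_h^i+k\v_h^i)}{\L^2(\Omega_1)}^2 = \norm{\nabla\m_h^i}{}^2 + 2k\dual{\nabla\m_h^i}{\nabla\v_h^i}_{\Omega_1} + k^2\norm{\nabla\v_h^i}{}^2$. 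Rearranging and inserting the displayed identity for $\dual{\nabla\m_h^i}{\nabla\v_h^i}_{\Omega_1}$ yields, after multiplying by $k/(2)$ and summing,
\begin{align*}
 \tfrac12\norm{\nabla\m_h^j}{\L^2(\Omega_1)}^2 + \alpha k\sum_{i=0}^{j-1}\norm{\v_h^i}{\L^2(\Omega_1)}^2 + (\theta-\tfrac12)\Cexchange k^2\sum_{i=0}^{j-1}\norm{\nabla\v_h^i}{\L^2(\Omega_1)}^2
 \le \tfrac12\norm{\nabla\m_h^0}{\L^2(\Omega_1)}^2 + k\sum_{i=0}^{j-1}\big(\dual{\hext_h^i}{\v_h^i}_{\Omega_1} - \dual{\operator_h(\m_h^i,\zeta_h^i)}{\v_h^i}_{\Omega_1}\big),
\end{align*}
where the $k^2$ term has the correct sign precisely because $\theta>1/2$.

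It remains to absorb the right-hand side. For the forcing terms I would use Young's inequality: $k\sum_i|\dual{\hext_h^i}{\v_h^i}_{\Omega_1}| \le \tfrac{\alpha}{4}k\sum_i\norm{\v_h^i}{}^2 + \tfrac{1}{\alpha}k\sum_i\norm{\hext_h^i}{}^2$, and the last sum is a Riemann sum bounded by $\norm{\hext_{hk}^-}{L^2(\L^2)}^2$, which is bounded uniformly in $h,k$ by~\eqref{assumption:f} (weakly convergent sequences are bounded). For the nonlocal contribution, the boundedness assumption~\eqref{assumption:chi:bounded} gives $\norm{\operator_h(\m_h^i,\zeta_h^i)}{\L^2(\Omega_1)}\le \c{bounded}(1+\norm{\nabla\m_h^i}{\L^2(\Omega_1)})$ — here one uses $|\m_h^i|\le1$ a.e.\ and the imposed bound $\norm{\zeta_h^i}{Y}\le\c{boundedy}$ — so that $k\sum_i|\dual{\operator_h(\m_h^i,\zeta_h^i)}{\v_h^i}_{\Omega_1}| \le \tfrac{\alpha}{4}k\sum_i\norm{\v_h^i}{}^2 + C k\sum_i(1+\norm{\nabla\m_h^i}{\L^2(\Omega_1)}^2)$. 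After absorbing the $\v_h^i$ terms into the left-hand side, one is left with the quantity $\norm{\nabla\m_h^j}{}^2$ controlled by $C(1+T) + Ck\sum_{i=0}^{j-1}\norm{\nabla\m_h^i}{}^2$, and a discrete Gronwall argument closes the estimate, with the final constant depending only on $\m^0$ (via $\norm{\nabla\m_h^0}{}\le C$, which follows from~\eqref{assumption:m0} and boundedness of weakly convergent sequences), on $\hext$, and on $T$.

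The main obstacle is handling the coupling between the $\operator_h$ term and the exchange energy: because~\eqref{assumption:chi:bounded} bounds $\operator_h$ only in terms of $\norm{\nabla\m_h^i}{\L^2(\Omega_1)}$ — rather than by a constant, as would be the case for a purely bounded lower-order term — one cannot simply Young-split and absorb; instead the $\norm{\nabla\m_h^i}{}^2$ contributions must be carried along and defeated by the discrete Gronwall lemma, which is why the final constant depends on $T$ exponentially. A secondary technical point is being careful that the sign of the $(\theta-1/2)k^2\sum\norm{\nabla\v_h^i}{}^2$ term is genuinely nonnegative and that the angle condition~\eqref{assumption:mesh} is invoked correctly to pass from the normalized $\m_h^{i+1}$ to its un-normalized predecessor without losing Dirichlet energy; both are standard but must be stated cleanly.
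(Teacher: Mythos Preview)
Your proposal is correct and follows essentially the same route as the paper: test~\eqref{eq:alg} with $\ppsi_h=\v_h^i$, combine with the angle-condition estimate $\norm{\nabla\m_h^{i+1}}{}^2\le\norm{\nabla(\m_h^i+k\v_h^i)}{}^2$, telescope in $i$, split the right-hand side via Young's inequality using~\eqref{assumption:chi:bounded} for the $\operator_h$ term, and close with the discrete Gronwall lemma. The only minor slips are cosmetic (a stray $\Cexchange$ in the displayed summed inequality, and ``$\theta>1/2$'' where $\theta\ge1/2$ already suffices for the sign of the $k^2$ term); the argument itself matches the paper's proof.
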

\begin{proof}
In~\eqref{eq:alg}, we use the test function $\ppsi_h = \v_h^i \in \KK_{\m_h^i}$ 
and get
\begin{align*}
\alpha \norm{\v_h^i}{\L^2(\Omega_1)}^2 + \Cexchange \theta\,k \norm{\nabla \v_h^i}{\L^2(\Omega_1)}^2
= &-\Cexchange\dual{\nabla \m_h^i}{\nabla \v_h^i}_{\Omega_1}
+ \dual{\hext_h^i}{\v_h^i}_{\Omega_1} \\
&- \dual{\operator_h(\m_h^i,\zeta_h^i)}{\v_h^i}_{\Omega_1}.
\end{align*}
The angle condition~\eqref{assumption:mesh} ensures
$\norm{\nabla\m_h^{i+1}}{\L^2(\Omega_1)}^2 \le \norm{\nabla (\m_h^i + k \v_h^i)}{\L^2(\Omega_1)}^2$, see Refs. \refcite{alouges08}, \refcite{alouges11}, \refcite{goldenits}.
We thus get
\begin{align}
\frac12\norm{\nabla \m_h^{i+1}}{\L^2(\Omega_1)}^2 &\le \frac12 \norm{\nabla \m_h^i}{\L^2(\Omega_1)}^2 +
k\dual{\nabla\m_h^i}{\nabla\v_h^i}_{\Omega_1}
+ \frac{k^2}{2} \norm{\nabla \v_h^i}{\L^2(\Omega_1)}^2 \nonumber\\
& \leq \frac12 \norm{\nabla \m_h^i}{\L^2(\Omega_1)}^2 - (\theta - 1/2)k^2\norm{\nabla \v_h^i}{\L^2(\Omega_1)}^2 \label{eq:nabla_m_bounded} \\
&\quad - \frac{\alpha\, k}{\Cexchange}\norm{\v_h^i}{\L^2(\Omega_1)}^2
+ \frac{k}{\Cexchange}\dual{\hext_h^i}{\v_h^i}_{\Omega_1}
- \frac{k}{\Cexchange}\dual{\operator_h(\m_h^i,\zeta_h^i)}{\v_h^i}_{\Omega_1}. \nonumber
\end{align}
Next, we sum up over $i = 0, \hdots, j-1$ to see
\begin{align*}
\frac12 \norm{\nabla \m_h^j}{\L^2(\Omega_1)}^2 \le &\frac12\norm{\nabla
\m_h^0}{\L^2(\Omega_1)}^2 - (\theta - 1/2)k^2 \sum_{i=0}^{j-1}
\norm{\nabla \v_h^i}{\L^2(\Omega_1)}^2 \\
&- \frac{\alpha k}{\Cexchange}\sum_{i=0}^{j-1}\norm{\v_h^i}{\L^2(\Omega_1)}^2 
+ \frac{k}{\Cexchange}\sum_{i=0}^{j-1}\big(\dual{\hext_h^i}{\v_h^i}_{\Omega_1}
- \dual{\operator_h(\m_h^i, \zeta_h^i)}{\v_h^i}_{\Omega_1}\big).
\end{align*}
Using the inequalities of Young and H\"older, this can be further estimated by
\begin{align*}
\frac12 &\norm{\nabla \m_h^j}{\L^2(\Omega_1)}^2 
+ \frac{k}{\Cexchange}(\alpha - \eps)\sum_{i=0}^{j-1}\norm{\v_h^i}{\L^2(\Omega_1)}^2
+ (\theta - 1/2)k^2 \sum_{i=0}^{j-1}\norm{\nabla \v_h^i}{\L^2(\Omega_1)}^2
\\ 
&\le \frac12\norm{\nabla\m_h^0}{\L^2(\Omega_1)}^2 
 + \frac{k}{\revision{2} \Cexchange \eps}\sum_{i=0}^{j-1}\big(\norm{\hext_h^i}{\L^2(\Omega_1)}^2 + \norm{\operator_h(\m_h^i, \zeta_h^i)}{\L^2(\Omega_1)}^2\big)
\end{align*}
for any $\eps > 0$. 
\revision{With the boundedness~\revision{\eqref{assumption:chi:bounded}}
of $\operator_h$, the last sum is estimated by
\begin{align*}
 k \sum_{i=0}^{j-1}(\norm{\hext_h^i}{\L^2(\Omega_1)}^2 \!+\! \norm{\operator_h(\m_h^i, \zeta_h^i)}{\L^2(\Omega_1)}^2)
 &\lesssim \norm{\hext_{hk}^-}{\L^2(\Omega_T)}^2
 \!+\! k\sum_{i=0}^{j-1}(1+\norm{\nabla\m_h^i}{\L^2(\Omega_1)}^2)
 \\
 &\lesssim \norm{\hext_{hk}^-}{\L^2(\Omega_T)}^2
 + T + k\,\sum_{i=0}^{j-1}\norm{\nabla\m_h^i}{\L^2(\Omega_1)}^2.
\end{align*}
Choosing $\eps<\alpha$, 
we altogether obtain
\begin{align*}
&\norm{\nabla \m_h^j}{\L^2(\Omega_1)}^2 
+ k\,\sum_{i=0}^{j-1}\norm{\v_h^i}{\L^2(\Omega_1)}^2
+ (\theta - 1/2)k^2 \sum_{i=0}^{j-1}\norm{\nabla \v_h^i}{\L^2(\Omega_1)}^2
\\ 
&\qquad\lesssim \norm{\hext_{hk}^-}{\L^2(\Omega_T)}^2
 + T + k\,\sum_{i=0}^{j-1}\norm{\nabla\m_h^i}{\L^2(\Omega_1)}^2.
\end{align*}
According to weak convergence~\eqref{assumption:f}--\eqref{assumption:m0},
there holds uniform boundedness $\norm{\hext_{hk}^-}{\L^2(\Omega_T)}^2 + 
\norm{\nabla\m_h^0}{\L^2(\Omega_1)}^2 \le C$. Consequently, the discrete
Gronwall lemma \revision{(see, e.g., Ref.~\refcite{thomee}, Lemma 10.5)} applies and concludes the proof.}
\end{proof}
\revision{As a consequence of the energy estimate~\eqref{eq:energy_discrete}, 
we obtain uniform boundedness of the discrete quantities.}
\revision{\begin{lemma}\label{lemma:dpr}
For $1/2\le\theta\le1$, it holds
\begin{align}
 \begin{split}\label{dpr:boundedness}
 &\norm{\m_{hk}^-}{L^\infty(\H^1)}
 + \norm{\m_{hk}}{L^\infty(\H^1)}
 + \norm{\partial_t\m_{hk}}{\L^2(\Omega_T)}\\
 &\qquad+ \norm{\v_{hk}^-}{\L^2(\Omega_T)}
 + \sqrt{(\theta-1/2)k}\,\norm{\nabla\v_{hk}^-}{\L^2(\Omega_T)}
 \le \c{dpr},
 \end{split}
\end{align}
where $\setc{dpr}>0$ does not depend on $h$ or $k$.
\end{lemma}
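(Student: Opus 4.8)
The plan is to read off every term in~\eqref{dpr:boundedness} directly from the energy estimate~\eqref{eq:energy_discrete} of Lemma~\ref{lem:energy}, by unwinding the definitions~\eqref{eq:timeapprox} of the two continuous-in-time interpolants together with the nodewise bounds already collected. First I would treat the two $L^\infty(\H^1)$-terms. On each subinterval $[t_i,t_{i+1})$ we have $\m_{hk}^-(t)=\m_h^i$, so $\norm{\m_{hk}^-(t)}{\H^1(\Omega_1)}^2=\norm{\m_h^i}{\L^2(\Omega_1)}^2+\norm{\nabla\m_h^i}{\L^2(\Omega_1)}^2$; here the first summand is at most $|\Omega_1|$ because $|\m_{hk}^-|\le1$ a.e., and the second is at most $\c{energy}$ by~\eqref{eq:energy_discrete}. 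Taking the essential supremum over $t\in[0,T]$ --- equivalently, the maximum over $i=0,\dots,N$ --- bounds $\norm{\m_{hk}^-}{L^\infty(\H^1)}$. For $\m_{hk}$ one writes $\m_{hk}(t)=\lambda\,\m_h^{i+1}+(1-\lambda)\,\m_h^i$ on $[t_i,t_{i+1}]$ with $\lambda=(t-ik)/k\in[0,1]$, so by the triangle inequality $\norm{\nabla\m_{hk}(t)}{\L^2(\Omega_1)}\le\max\{\norm{\nabla\m_h^i}{\L^2(\Omega_1)},\norm{\nabla\m_h^{i+1}}{\L^2(\Omega_1)}\}$, while $|\m_{hk}|\le1$ a.e.; hence the same bound follows.

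Next, the two $\v_{hk}^-$-terms are immediate from the piecewise-constant structure: since $\v_{hk}^-(t)=\v_h^i$ on $[t_i,t_{i+1})$, one has $\norm{\v_{hk}^-}{\L^2(\Omega_T)}^2=k\sum_{i=0}^{N-1}\norm{\v_h^i}{\L^2(\Omega_1)}^2$ and $(\theta-1/2)\,k\,\norm{\nabla\v_{hk}^-}{\L^2(\Omega_T)}^2=(\theta-1/2)\,k^2\sum_{i=0}^{N-1}\norm{\nabla\v_h^i}{\L^2(\Omega_1)}^2$, and both right-hand sides are bounded by $\c{energy}$ upon applying~\eqref{eq:energy_discrete} with $j=N$. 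For the time-derivative term I would use $\partial_t\m_{hk}=(\m_h^{i+1}-\m_h^i)/k$ on $(t_i,t_{i+1})$, combine the nodal estimate $|\m_h^{i+1}-\m_h^i|\le k\,|\v_h^i|$ of Lemma~\ref{lemma1:aux} with the nodal-to-$\L^2$ norm equivalence of Lemma~\ref{lemma3:aux} (with $p=2$) to get $\norm{\m_h^{i+1}-\m_h^i}{\L^2(\Omega_1)}^2\lesssim h^3\sum_{z\in\NN_h}|\m_h^{i+1}(z)-\m_h^i(z)|^2\le h^3k^2\sum_{z\in\NN_h}|\v_h^i(z)|^2\lesssim k^2\norm{\v_h^i}{\L^2(\Omega_1)}^2$, and hence $\norm{\partial_t\m_{hk}}{\L^2(\Omega_T)}^2=k\sum_{i=0}^{N-1}\norm{(\m_h^{i+1}-\m_h^i)/k}{\L^2(\Omega_1)}^2\lesssim k\sum_{i=0}^{N-1}\norm{\v_h^i}{\L^2(\Omega_1)}^2\le\c{energy}$.

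Collecting the five estimates gives~\eqref{dpr:boundedness} with $\c{dpr}$ depending only on $\hext$, $\m^0$, $T$, $|\Omega_1|$ and the shape constant $\c{shape}$ from Lemma~\ref{lemma3:aux}, but not on $h$ or $k$. There is essentially no obstacle here: the statement is a bookkeeping corollary of Lemma~\ref{lem:energy}. The only point that needs a little care is the $\partial_t\m_{hk}$-term, where $\m_h^{i+1}-\m_h^i$ cannot be controlled in $\L^2(\Omega_1)$ directly and one has to route the estimate through the nodal bound of Lemma~\ref{lemma1:aux} and the discrete norm equivalence of Lemma~\ref{lemma3:aux}; the rest is just substituting the interpolant definitions into the already-proven discrete energy inequality.
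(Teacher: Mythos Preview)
Your proposal is correct and follows essentially the same approach as the paper: both arguments read the $\v_{hk}^-$ and $\nabla\v_{hk}^-$ bounds directly from~\eqref{eq:energy_discrete} with $j=N$, bound the $L^\infty(\H^1)$-norms via $\max_j\norm{\nabla\m_h^j}{\L^2(\Omega_1)}$ together with $|\m_{hk}|,|\m_{hk}^-|\le1$, and handle $\partial_t\m_{hk}$ by routing the nodewise estimate of Lemma~\ref{lemma1:aux} through the norm equivalence of Lemma~\ref{lemma3:aux}. Your write-up is slightly more explicit about the convex-combination argument for $\m_{hk}$ and about the constants, but the logic is identical.
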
}
\revision{\begin{proof}
Estimate~\eqref{eq:energy_discrete} reveals
\begin{align*}
 \max_{j=0,\dots,N}\norm{\nabla\m_h^j}{\L^2(\Omega_1)}^2
 + &
 \norm{\v_{hk}^-}{\L^2(\Omega_T)}^2
 + (\theta-1/2)k\,\norm{\nabla\v_{hk}^-}{\L^2(\Omega_T)}^2
 \lesssim 
 \c{energy}.
\end{align*}
Clearly, it holds 
\begin{align*}
 \norm{\nabla\m_{hk}}{L^\infty(\L^2)}^2
 + \norm{\nabla\m_{hk}^-}{L^\infty(\L^2)}^2 
 \lesssim 
 \max_{j=0,\dots,N}\norm{\nabla\m_h^j}{\L^2(\Omega_1)}^2.
\end{align*}
Together with $\norm{\m_{hk}}{\L^\infty(\Omega_T)}=1=\norm{\m_{hk}^-}{\L^\infty(\Omega_T)}$, this bounds the $L^\infty(\H^1)$-norms of $\m_{hk}$
and $\m_{hk}^-$.
For $t_j\le t<t_{j+1}$, Lemma~\ref{lemma1:aux} 
\revision{and Lemma~\ref{lemma3:aux}} prove
\begin{align*}
 \norm{\partial_t\m_{hk}(t)}{\L^2(\Omega_1)}^2
 = \norm{(\m_h^{j+1}-\m_h^{j})/k}{\L^2(\Omega_1)}^2
 \lesssim \norm{\v_h^j}{\L^2(\Omega_1)}^2,
\end{align*}
whence $\norm{\partial_t\m_{hk}}{\L^2(\Omega_T)}^2 
\lesssim \norm{\v_{hk}^-}{\L^2(\Omega_T)}^2$. This concludes the proof.
\end{proof}}%
\revision{Using~\eqref{dpr:boundedness}, we can extract weakly convergent
subsequences.}
\begin{lemma}\label{lem:subsequences}
There exist functions $\m \in \H^1(\Omega_T)$ and $\v \in \L^2(\Omega_T)$ such that
\begin{align*}
\begin{split}
 \m_{hk} &\rightharpoonup \m \text{ weakly in } \H^1(\Omega_T),\\
 \m_{hk}, \m_{hk}^- &\rightharpoonup \m \text{ weakly in } \revision{L^2(\H^1)}, \\
 \m_{hk}, \m_{hk}^- &\rightarrow \m \text{ strongly in } \L^2(\Omega_T),\\
 \v_{hk}^- &\rightharpoonup \v \text{ weakly in } \L^2(\Omega_T),
\end{split}
\end{align*}
as $(h,k) \rightarrow (0,0)$ independently of each other. Here, the 
\revision{convergences are to} be understood for one particular subsequence that is successively \revision{extracted}. 
\end{lemma}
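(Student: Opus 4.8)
The plan is to obtain all four convergences by successive weak-compactness extractions, each justified by the uniform bounds collected in Lemma~\ref{lemma:dpr}; no new analytic input is required.

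I would start from the bound $\norm{\m_{hk}}{\H^1(\Omega_T)} \le \c{dpr}$ of~\eqref{dpr:boundedness}. Since $\H^1(\Omega_T)$ is a Hilbert space, hence reflexive, the Banach--Alaoglu theorem provides a subsequence and a limit $\m\in\H^1(\Omega_T)$ with $\m_{hk}\rightharpoonup\m$ weakly in $\H^1(\Omega_T)$. The space-time cylinder $\Omega_T=[0,T]\times\Omega_1\subset\R^4$ is a bounded Lipschitz domain, so the Rellich--Kondrachov theorem yields the compact embedding $\H^1(\Omega_T)\hookrightarrow\hookrightarrow\L^2(\Omega_T)$; passing to a further subsequence therefore gives $\m_{hk}\to\m$ strongly in $\L^2(\Omega_T)$.

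Next I would transfer this to the piecewise-constant interpolant. On each subinterval $[t_i,t_{i+1})$ the definitions~\eqref{eq:timeapprox} give $\m_{hk}(t)-\m_{hk}^-(t)=(t-ik)\,\partial_t\m_{hk}(t)$ with $0\le t-ik<k$, whence $\norm{\m_{hk}-\m_{hk}^-}{\L^2(\Omega_T)}\le k\,\norm{\partial_t\m_{hk}}{\L^2(\Omega_T)}\le k\,\c{dpr}\to0$. Hence $\m_{hk}^-\to\m$ strongly in $\L^2(\Omega_T)$ along the same subsequence. Both $\m_{hk}$ and $\m_{hk}^-$ are bounded in $L^2(\H^1)$ by~\eqref{dpr:boundedness} (using $L^\infty(\H^1)\hookrightarrow L^2(\H^1)$ over the finite interval $[0,T]$), so one more extraction makes them converge weakly in $L^2(\H^1)$, and the weak limit must coincide with $\m$ because testing against functions in $\L^2(\Omega_T)$ reproduces the already-established strong $\L^2(\Omega_T)$-limit. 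Finally, $\norm{\v_{hk}^-}{\L^2(\Omega_T)}\le\c{dpr}$ yields, by a last extraction, a function $\v\in\L^2(\Omega_T)$ with $\v_{hk}^-\rightharpoonup\v$ weakly in $\L^2(\Omega_T)$. Since each extraction only thins out the previous subsequence, all stated convergences hold along one and the same final subsequence; the two parameters are handled in the usual way by working along an arbitrary sequence $(h_n,k_n)\to(0,0)$.

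There is essentially no obstacle here: everything reduces to reflexivity, Banach--Alaoglu and Rellich--Kondrachov. The only mildly delicate points are invoking the compact Sobolev embedding on the four-dimensional Lipschitz cylinder $\Omega_T$ and verifying that $\m_{hk}$ and $\m_{hk}^-$ admit the same limit — both addressed above.
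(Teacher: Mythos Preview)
Your proposal is correct and follows essentially the same route as the paper: extract weak subsequences from the uniform bounds of Lemma~\ref{lemma:dpr}, upgrade to strong $\L^2(\Omega_T)$-convergence via Rellich, and identify the limit of $\m_{hk}^-$ with that of $\m_{hk}$ by showing $\norm{\m_{hk}-\m_{hk}^-}{\L^2(\Omega_T)}\to0$. Your estimate $\norm{\m_{hk}-\m_{hk}^-}{\L^2(\Omega_T)}\le k\,\norm{\partial_t\m_{hk}}{\L^2(\Omega_T)}$ is in fact a bit more direct than the paper's, which instead passes through Lemma~\ref{lemma1:aux} and Lemma~\ref{lemma3:aux} to reach the same conclusion.
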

\begin{proof}
\revision{Due to the uniform boundedness~\eqref{dpr:boundedness}, 
one may extract weakly convergent subsequences (with possibly different limits).}
It thus only remains to show, that the limits coincide, e.g.,
\begin{align*}
 \revision{\m_{hk}^-\rightharpoonup\m \text{ weakly in } \L^2(\Omega_T)\text{ and }  L^2(\H^1),}
\end{align*}
where \revision{$\m_{hk}\rightharpoonup\m$ weakly} in $\H^1(\Omega_T)$. 
Due to the Rellich compactness theorem, we have
$\m_{hk}\to\m$ strongly in $\L^2(\Omega_T)$.
We rewrite $\m_{hk}$ for $t_j\le t < t_{j+1}$ as
\begin{align*}
\m_{hk}\revision{(t)} = \m_h^j + \frac{t-t_j}{k}(\m_h^{j+1} - \m_h^j).
\end{align*}
\revision{Lemma~\ref{lemma1:aux} and Lemma~\ref{lemma3:aux} thus yield}
\begin{align*}
&\norm{\m_{hk} - \m_{hk}^-}{\L^2(\Omega_T)}^2 = \sum_{j=0}^{N-1}\int_{t_j}^{t_{j+1}} \norm{\m_h^j + \frac{t-t_j}{k}(\m_h^{j+1} - \m_h^j) - \m_h^j}{\L^2(\Omega_1)}^2\\
&\quad\le \sum_{j=0}^{N-1}\int_{t_j}^{t_{j+1}} k^2\norm[\Big]{\frac{\m_h^{j+1} - \m_h^j}{k}}{\L^2(\Omega_1)}^2 \lesssim 
k^3 \sum_{j=0}^{N-1}\norm{\v_h^j}{\L^2(\Omega_1)}^2 \to 0.
\end{align*}
This proves the result for $\L^2(\Omega_T)$. From the uniqueness of weak limits and the continuous inclusion $L^2(\H^1) \subseteq L^2(\Omega_T)$, we also conclude the result for $\revision{L^2(\H^1)}$.
\end{proof}

Next, we identify the limit function $\v$.

\begin{lemma}\label{lem:v}
It holds $\v = \revision{\partial_t\m}$.
\end{lemma}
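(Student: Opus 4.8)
The plan is to identify the weak limit $\v$ of the piecewise-constant time derivatives $\v_{hk}^-$ with the weak time derivative $\partial_t\m$ of the limit $\m$, using the fact that $\partial_t\m_{hk} = (\m_h^{i+1}-\m_h^i)/k$ is the piecewise-constant interpolant of the difference quotients while $\v_h^i$ is the genuine discrete tangent update. First I would recall from Lemma~\ref{lem:subsequences} that $\m_{hk}\weakto\m$ weakly in $\H^1(\Omega_T)$, so in particular $\partial_t\m_{hk}\weakto\partial_t\m$ weakly in $\L^2(\Omega_T)$; hence it suffices to show that $\partial_t\m_{hk} - \v_{hk}^- \to 0$ strongly in $\L^2(\Omega_T)$, since then the two weak limits must agree by uniqueness of weak limits.

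The key estimate is nodal: by Lemma~\ref{lemma2:aux} we have $|\m_h^{i+1}-\m_h^i-k\v_h^i|\le \tfrac12 k^2|\v_h^i|^2$ at every node $z\in\NN_h$, so $|\partial_t\m_{hk}(t,z) - \v_h^i(z)| \le \tfrac12 k\,|\v_h^i(z)|^2$ for $t_i\le t<t_{i+1}$. Squaring, summing over the nodes, and invoking Lemma~\ref{lemma3:aux} (the norm equivalence between the discrete $\ell^p$-norm of nodal values and the $\L^p$-norm, here with $p=2$ and $p=4$) gives, for each time slab,
\begin{align*}
 \norm{\partial_t\m_{hk}(t) - \v_h^i}{\L^2(\Omega_1)}^2
 \lesssim h^3\sum_{z\in\NN_h} k^2\,|\v_h^i(z)|^4
 = k^2\, h^3\sum_{z\in\NN_h} |\v_h^i(z)|^4
 \lesssim k^2\,\norm{\v_h^i}{\L^4(\Omega_1)}^4.
\end{align*}
Next I would use the inverse estimate $\norm{\v_h^i}{\L^4(\Omega_1)}\lesssim h^{-3/4}\norm{\v_h^i}{\L^2(\Omega_1)}$ valid on the quasi-uniform mesh $\TT_h^{\Omega_1}$, which yields $\norm{\partial_t\m_{hk}(t)-\v_h^i}{\L^2(\Omega_1)}^2 \lesssim k^2 h^{-3}\norm{\v_h^i}{\L^2(\Omega_1)}^4$. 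Integrating over $[0,T]$ and using $k\sum_{i=0}^{N-1}\norm{\v_h^i}{\L^2(\Omega_1)}^2\le\c{energy}$ together with the uniform bound $\norm{\v_h^i}{\L^2(\Omega_1)}^2\le\c{energy}/k$ from the same energy estimate (which is admittedly crude), one arrives at $\norm{\partial_t\m_{hk}-\v_{hk}^-}{\L^2(\Omega_T)}^2 \lesssim k^2 h^{-3}\cdot (\c{energy}/k)\cdot \c{energy} = \c{energy}^2\, k\, h^{-3}$, which tends to zero only under a coupling condition $k=o(h^3)$.

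The main obstacle is precisely this coupling: since the theorem asserts convergence for $h,k\to0$ \emph{independently}, one cannot afford the inverse-estimate route above. The cleaner argument — and the one I would ultimately carry out — avoids inverse estimates entirely. From the energy estimate~\eqref{eq:energy_discrete} we already know $\norm{\v_{hk}^-}{\L^2(\Omega_T)}^2\le\c{energy}/k \cdot k$... more usefully, we control $\norm{\partial_t\m_{hk}}{\L^2(\Omega_T)}\le\c{dpr}$ by Lemma~\ref{lemma:dpr}, and we already established in the proof of Lemma~\ref{lem:subsequences} that $\norm{\m_{hk}-\m_{hk}^-}{\L^2(\Omega_T)}^2\lesssim k^3\sum_j\norm{\v_h^j}{\L^2(\Omega_1)}^2 = k^2\cdot k\sum_j\norm{\v_h^j}{\L^2(\Omega_1)}^2\lesssim k^2\to0$. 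So instead I would argue by testing: for any $\pphi\in C^\infty(\overline\Omega_T)$ with $\pphi(T)=0$, integration by parts gives $\dual{\partial_t\m_{hk}}{\pphi}_{\Omega_T} = -\dual{\m_{hk}}{\partial_t\pphi}_{\Omega_T} - \dual{\m_h^0}{\pphi(0)}_{\Omega_1}$; passing to the weak/strong limits gives $\dual{\v}{\pphi}_{\Omega_T} = -\dual{\m}{\partial_t\pphi}_{\Omega_T} - \dual{\m^0}{\pphi(0)}_{\Omega_1}$ — wait, this only identifies the limit of $\partial_t\m_{hk}$, not of $\v_{hk}^-$. To bridge the two I use Lemma~\ref{lemma2:aux} in its $\L^1$ rather than $\L^4$ form: $\norm{\m_h^{i+1}-\m_h^i-k\v_h^i}{\L^1(\Omega_1)}\lesssim k^2\norm{\v_h^i}{\L^2(\Omega_1)}^2$ via Lemma~\ref{lemma3:aux} with $p=1$ and $p=2$, so $\norm{\partial_t\m_{hk}-\v_{hk}^-}{L^1(\L^1)}\lesssim k\cdot k\sum_i\norm{\v_h^i}{\L^2(\Omega_1)}^2\lesssim k\to 0$. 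Hence $\partial_t\m_{hk}-\v_{hk}^-\to0$ in $L^1(\L^1)$; since both sequences are bounded in $\L^2(\Omega_T)$ and $\partial_t\m_{hk}\weakto\partial_t\m$, $\v_{hk}^-\weakto\v$ weakly in $\L^2(\Omega_T)$, testing against an arbitrary $\pphi\in C^\infty(\overline\Omega_T)\subset L^\infty(\Omega_T)$ and using the $L^1$-convergence forces $\dual{\v-\partial_t\m}{\pphi}_{\Omega_T}=0$, whence $\v=\partial_t\m$ as claimed. I would present this $L^1$-route as the proof, flagging the energy bound $k\sum\norm{\v_h^i}{\L^2(\Omega_1)}^2\lesssim\c{energy}$ as the one nontrivial input.
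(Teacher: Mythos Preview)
Your proposal is correct, and the argument you ultimately settle on --- the $\L^1$-route via Lemma~\ref{lemma2:aux} and Lemma~\ref{lemma3:aux} yielding $\norm{\partial_t\m_{hk}-\v_{hk}^-}{\L^1(\Omega_T)}\lesssim k\,\norm{\v_{hk}^-}{\L^2(\Omega_T)}^2\to0$ --- is exactly the paper's proof. The two false starts (the $\L^4$/inverse-estimate route and the integration-by-parts detour) should simply be dropped; the paper goes straight to the $\L^1$ estimate. The only cosmetic difference is in the final identification: the paper invokes weak lower semicontinuity of $\norm{\cdot}{\L^1(\Omega_T)}$ applied to the difference $\partial_t\m_{hk}-\v_{hk}^-$, whereas you test against smooth $\pphi$ and use the $\L^1$--$\L^\infty$ pairing, which is an equivalent way to conclude.
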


\begin{proof}
\revision{For $t_j\le t<t_{j+1}$, Lemma~\ref{lemma2:aux} and
Lemma~\ref{lemma3:aux} prove
\begin{align*}
 \norm{\partial_t\m_{hk}(t) -\v_{hk}(t)}{\L^1(\Omega_1)}
 = \norm{(\m_h^{j+1}-\m_h^j)/k - \v_h^j}{\L^1(\Omega_1)}
 \lesssim k\,\norm{\v_h^j}{\L^2(\Omega_1)}^2.
\end{align*}
Integration in time yields}
\begin{align*}
\norm{\partial_t\m_{hk} -\v_{hk}}{\L^1(\Omega_T)} \lesssim k \norm{\v_{hk}}{\L^2(\Omega_T)}^2.
\end{align*}
Exploiting weak semi-continuity of $\norm{\cdot}{\L^1(\Omega_T)}$, \revision{we obtain}
\begin{align*}
\norm{\revision{\partial_t\m} - \v}{\L^1(\Omega_T)} \le \liminf_{\revision{(h,k)\to0}} \norm{\partial_t\m_{hk} -\v_{hk}}{\L^1(\Omega_T)} = 0
\end{align*}
and thus \revision{prove} the desired result.
\end{proof}

\revision{So far, we have only used the boundedness 
assumptions~\eqref{assumption:f}--\eqref{assumption:chi:bounded} and $\theta\ge1/2$. 
To conclude the proof of Theorem~\ref{theorem}~$(a)$, it remains to prove that 
$|\m|=1$ in $\Omega_T$ (Definition~\ref{def:weaksol} (i)).
We also note that bounded energy 
(Definition~\ref{def:weaksol} (iii)) is already a direct consequence of 
Lemma~\ref{lemma:dpr}.}

\medskip

\revision{\noindent{\bf Verification of Definition~\ref{def:weaksol} (i).}}
From
\begin{align*}
\norm{|\m| - 1}{\L^2(\Omega_T)} \le \norm{|\m| - |\m_{hk}^-|}{\L^2(\Omega_T)} + \norm{|\m_{hk}^-|-1}{\L^2(\Omega_T)}
\end{align*}
and
\begin{align*}
\norm{|\m_{hk}^-(t,\cdot)|-1}{\L^2(\Omega_1)} \le h \max_{j=0,\dots,N}\norm{\nabla \m_h^j}{\L^2(\Omega_1)},
\end{align*}
we deduce $|\m| = 1$ almost everywhere in $\Omega_T$. \revision{Together with
$\m_{hk}(0)=\m_h^0$, the equality 
$\m(0) = \m^0$ in the trace sense follows from the convergences 
$\m_h^0\weakto\m^0$ weakly in $\H^1(\Omega_1)$ as well as 
$\m_{hk}\weakto\m$ weakly in $\H^1(\Omega_T)$ (at least for a subsequence) 
and thus weak convergence of the traces.}
\hfill\qed\\

\revision{To prove Theorem~\ref{theorem} (b), it remains to show that 
the limit function $\m$ also satisfies Definition~\ref{def:weaksol} (ii).
This is done in the following and requires assumption~\eqref{assumption:chi:convergence} as well as $\theta>1/2$.}

\revision{\noindent{\bf Verification of Definition~\ref{def:weaksol} (ii).}}
Let $\pphi \in C^\infty(\revision{\overline\Omega_T})$ be arbitrary. We define test functions by \revision{$\ppsi_h:=\II_h(\m_{hk}^-\times \pphi)$,
where $\II_h:C(\overline\Omega)\to\VV_h$ denotes the nodal interpolation operator
which only acts on the spatial variable}.
\revision{Note that $\psi_h(t)\in\KK_{\m_h^j}$ for all $t_j\le t<t_{j+1}$.
Integration of~\eqref{eq:alg} in time thus gives}
\begin{align*}
\alpha\int_0^{T} &\dual{\v_{hk}^-}{\ppsi_h}_{\Omega_1}
+ \Cexchange k\theta \int_0^{T} \dual{\nabla\v_{hk}^-}{\nabla \ppsi_h}_{\Omega_1}
+ \int_0^{T} \dual{\m_{hk}^-\times\v_{hk}^-}{\ppsi_h}_{\Omega_1}\\
&= -\Cexchange \int_0^{T} \dual{\nabla\m_{hk}^-}{\nabla\ppsi_h}_{\Omega_1}
- \int_0^{T} \dual{\operator_h(\m_{hk}^-,\zeta_{hk}^-)}{\ppsi_h}_{\Omega_1}
+ \int_0^T \dual{\hext_{hk}^-}{\ppsi_h}_{\Omega_1}.
\end{align*}
Exploiting the approximation properties of $\II_h$ for $\ppsi=\m_{hk}^-\times\phi$, 
we get
\begin{align*}
&\int_0^T \dual{\alpha \v_{hk}^- + \m_{hk}^- \times \v_{hk}^-}{\m_{hk}^- \times \pphi}_{\Omega_1}
+ \Cexchange k\theta\int_0^{T} \dual{\nabla \v_{hk}^-}{\nabla(\m_{hk}^- \times \pphi)}_{\Omega_1}\\
&\quad+ \Cexchange\int_0^{T}\dual{\nabla \m_{hk}^-}{\nabla(\m_{hk}^- \times \pphi)}_{\Omega_1}
+ \int_0^{T} \dual{\operator_h(\m_{hk}^-,\zeta_{hk}^-)}{\m_{hk}^-\times \pphi}_{\Omega_1}\\
&\quad- \int_0^{T} \dual{\hext_{hk}^-}{\m_{hk}^-\times \pphi}_{\Omega_1}
 =\mathcal{O}(h).
\end{align*}
Next, we proceed as in Refs.~\refcite{alouges08}, \refcite{goldenits} to see that
\begin{align}\label{eq:nablav}\nonumber
&\int_0^{T} \dual{\alpha \v_{hk}^- + \m_{hk}^- \times \v_{hk}^-}{\m_{hk}^-\times\pphi}_{\Omega_1}
\longrightarrow \int_0^{T} \dual{\alpha\partial_t\m + \m \times \partial_t\m}{\m \times \pphi}_{\Omega_1},\\
&k\,\theta\int_0^{T} \dual{\nabla \v_{hk}^-}{\nabla(\m_{hk}^- \times \pphi)}_{\Omega_1}
\longrightarrow 0, \quad \text{ and }
\\
&\int_0^{T}\dual{\nabla \m_{hk}^-}{\nabla(\m_{hk}^- \times \pphi)}_{\Omega_1}
=\int_0^{T}\dual{\nabla \m_{hk}^-}{\m_{hk}^- \times \nabla\pphi}_{\Omega_1}
\longrightarrow \int_0^{T}\dual{\nabla\m}{\m \times\nabla\pphi}_{\Omega_1}.\nonumber
\end{align}
Here, we have used the boundedness of $\sqrt{k}\norm{\nabla \v_{hk}^-}{\L^2(\Omega_T)}$, which follows from~\revision{\eqref{dpr:boundedness}}
and \revision{$1/2<\theta\le1$}.
From the convergence $\m_{hk}^-\times \pphi \rightarrow \m \times \pphi$ strongly in $\L^2(\Omega_T)$ and the assumptions~\eqref{assumption:f} and~\eqref{assumption:chi:convergence} on $\hext_{hk}^-$ and $\operator_h(\m_{hk}^-, \zeta_{hk}^-)$, we conclude
\begin{align*}
\int_0^{T} \dual{\operator_h(\m_{hk}^-, \zeta_{hk}^-)}{\m_{hk}^-\times \pphi}_{\Omega_1}
&\longrightarrow \int_0^{T} \dual{\operator(\m, \zeta)}{\m \times \pphi}_{\Omega_1},
\quad \text{ and }\\
\int_0^{T} \dual{\hext_{hk}^-}{\m_{hk}^- \times \pphi}_{\Omega_1}
&\longrightarrow \int_0^{T} \dual{\hext}{\m \times \pphi}_{\Omega_1}.
\end{align*}
Altogether, we have now shown
\begin{align*}
&\alpha \int_0^{T} \dual{\partial_t\m}{\m \times \pphi}_{\Omega_1}
+ \int_0^{T} \dual{\m \times \partial_t\m}{\m \times \pphi}_{\Omega_1} = \\
&\quad-\Cexchange \int_0^{T} \dual{\nabla \m}{\nabla(\m \times \pphi)}_{\Omega_1}
-\int_0^{T} \dual{\operator(\m,\zeta)}{\m \times \pphi}_{\Omega_1}
+ \int_0^{T} \dual{\hext}{\m \times \pphi}_{\Omega_1}.
\end{align*}
Using the identity
$(\m \times \partial_t\m)\cdot(\m \times \pphi) = \revision{\partial_t\m} \cdot \pphi,$
we conclude~\eqref{eq:weaksol_def}.
\hfill\qed\\

\begin{remark}
Note that in case of the Crank-Nicholson-type scheme $(\theta = 1/2)$ one needs an additional bound for $\nabla\v_{hk}^-$ in~\eqref{eq:nablav}.
As in Refs.~\refcite{alouges08}, \refcite{alouges11}, \refcite{goldenits}, this can be obtained from an inverse estimate.
In this case, however, we end up with a (weak) coupling of $h$ and $k$, but still prove convergence as long as $k/h$ tends to $0$.
\end{remark}
\section{Effective field contributions for multiscale LLG equation}
\label{section:heff}
\noindent In this section, we give examples for contributions $\operator$ and corresponding discretizations $\operator_h$ which guarantee the assumptions~\eqref{assumption:chi:bounded}--\eqref{assumption:chi:convergence} of Theorem~\ref{theorem}.
In particular, we show that the contributions of our multiscale LLG model satisfy these assumptions.
\subsection{Pointwise operators and anisotropy energy contribution}\label{sec:anisotropy}
With $\B:=\set{x\in\R^3}{|x|\le1}$ the compact unit ball in $\R^3$, let $\phi:\B\to\R$ be a \revision{continuously differentiable} anisotropy density. Possible examples include the uniaxial density $\phi(x) = -\frac12\,(x\cdot\e)^2$ with a given easy axis \revision{$\e\in\R^3$ with $|\e|=1$} 
as well as the cubic density $\phi(x) = K_1(x_1^2x_2^2+x_2^2x_3^2) + K_2 x_1^2x_2^2x_3^2$ with certain constants $K_1,K_2\ge0$. 
The anisotropy contribution to the effective field reads
\begin{align*}
 \operator(\n, \zeta) = \operator(\n) = D\phi\circ\n
 \quad\text{for }\n\in\L^2(\Omega_1),
\end{align*}
and $\operator_h = \operator$. Note that in this case, we neglected a possible dependence on $\zeta$, i.e., formally $Y = \{0\}$ and $\zeta_{hk}^-$ denotes the constant zero sequence.
\begin{proposition}\label{lemma:anisotropy}
Suppose that $\boldsymbol\Phi\in \revision{C(\B)}$, e.g., $\boldsymbol\Phi(x)= D\phi(x)$, and $\operator_h(\n):=\operator(\n):=\boldsymbol\Phi\circ\n$. Then, the assumptions~\eqref{assumption:chi:bounded}--\eqref{assumption:chi:convergence} of Theorem~\ref{theorem} are satisfied.
\end{proposition}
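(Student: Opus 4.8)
The plan is to verify the two assumptions \eqref{assumption:chi:bounded} and \eqref{assumption:chi:convergence} directly from the fact that $\boldsymbol\Phi$ is continuous on the compact set $\B$, hence bounded. First I would establish \eqref{assumption:chi:bounded}: since $\boldsymbol\Phi\in C(\B)$, the quantity $\|\boldsymbol\Phi\|_{\L^\infty(\B)}$ is finite, and for any $\n\in\H^1(\Omega_1)$ with $|\n|\le1$ the composition $\boldsymbol\Phi\circ\n$ is well-defined pointwise a.e.\ and satisfies $|\boldsymbol\Phi(\n(x))|\le\|\boldsymbol\Phi\|_{\L^\infty(\B)}$ for a.e.\ $x\in\Omega_1$. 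Integrating over $\Omega_1$ yields
\begin{align*}
 \norm{\operator_h(\n)}{\L^2(\Omega_1)}
 = \norm{\boldsymbol\Phi\circ\n}{\L^2(\Omega_1)}
 \le |\Omega_1|^{1/2}\,\norm{\boldsymbol\Phi}{\L^\infty(\B)}
 \le \c{bounded}\,(1+\norm{\nabla\n}{L^2(\Omega_1)}),
\end{align*}
with $\c{bounded}:=|\Omega_1|^{1/2}\,\norm{\boldsymbol\Phi}{\L^\infty(\B)}$, which is independent of $h$, $k$, $\n$ (and trivially of $y$, since there is no $y$-dependence). This gives \eqref{assumption:chi:bounded}.

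For \eqref{assumption:chi:convergence}, I would use the strong $\L^2(\Omega_T)$-convergence $\m_{hk}^-\to\m$ furnished by Lemma~\ref{lem:subsequences}, together with the observation that $|\m_{hk}^-|\le1$ and $|\m|=1$ almost everywhere, so all the relevant values lie in $\B$ where $\boldsymbol\Phi$ is (uniformly) continuous. Passing to a further subsequence, $\m_{hk}^-\to\m$ pointwise almost everywhere in $\Omega_T$; by continuity of $\boldsymbol\Phi$ on $\B$, this gives $\boldsymbol\Phi\circ\m_{hk}^-\to\boldsymbol\Phi\circ\m$ pointwise a.e. Moreover the sequence $\boldsymbol\Phi\circ\m_{hk}^-$ is uniformly bounded in $\L^\infty(\Omega_T)$ by $\norm{\boldsymbol\Phi}{\L^\infty(\B)}$, hence dominated. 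By the dominated convergence theorem, $\boldsymbol\Phi\circ\m_{hk}^-\to\boldsymbol\Phi\circ\m$ \emph{strongly} in $\L^2(\Omega_T)$, which is more than the weak convergence \eqref{assumption:chi:convergence} asks for; since $\operator_h=\operator$ here and $\operator(\m,\zeta)=\boldsymbol\Phi\circ\m$, this is precisely $\operator_h(\m_{hk}^-,\zeta_{hk}^-)\weakto\operator(\m,\zeta)$ along a subsequence.

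I expect no serious obstacle here: the whole point is that a pointwise (Nemytskii) operator built from a continuous function on the compact range $\B$ behaves well, so both assumptions reduce to $\L^\infty$-boundedness of $\boldsymbol\Phi$ plus continuity. The only mild care needed is bookkeeping about subsequences — extracting a pointwise-a.e.\ convergent subsequence from the strongly $\L^2$-convergent one before invoking continuity of $\boldsymbol\Phi$ — and noting that the nodal constraint $|\m_h^i(z)|=1$ together with Lemma-type barycentric estimates already guarantees $|\m_{hk}^-|\le1$ so that the argument of $\boldsymbol\Phi$ never leaves its domain $\B$. Since $\operator$ does not depend on $\zeta$ (formally $Y=\{0\}$), the hypotheses involving $\zeta_h^j$ and $\c{boundedy}$ are vacuous.
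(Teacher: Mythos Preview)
Your proposal is correct and follows essentially the same route as the paper's proof: verify \eqref{assumption:chi:bounded} via the $\L^\infty$-bound of $\boldsymbol\Phi$ on the compact ball $\B$, then for \eqref{assumption:chi:convergence} invoke the strong $\L^2(\Omega_T)$-convergence of $\m_{hk}^-$ (the paper cites Theorem~\ref{theorem}(a) rather than Lemma~\ref{lem:subsequences}, but these amount to the same thing), extract a pointwise a.e.\ convergent subsequence, and apply dominated convergence. Your version is slightly more explicit about the constant $|\Omega_1|^{1/2}$ and about why the argument of $\boldsymbol\Phi$ stays in $\B$, but the substance is identical.
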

\begin{proof}
Clearly,~\eqref{assumption:chi:bounded} holds with $\c{bounded} = \norm{\boldsymbol\Phi}{\revision{\L^\infty(\B)}}$. Part $(a)$ of Theorem~\ref{theorem} thus predicts convergence of a subsequence $\m_{hk}^- \to \m$ strongly in $\L^2(\Omega_T)$.
Now, choose sequences $h_\ell\to0$, $k_\ell\to0$ such that $\m_\ell:=\m_{h_\ell
k_\ell}^-$ converges strongly in $\L^2(\Omega_T)$ to $\m$. By extracting a
subsequence, we may in particular assume that $\m_\ell$ converges to $\m$ even
pointwise almost everywhere in $\Omega_T$. This implies
$\operator(\m_\ell)\to\operator(\m)$ pointwise almost everywhere in
$\Omega_T$. 
Moreover and because of~\eqref{assumption:chi:bounded}, $|\operator(\m)-\operator(\m_\ell)|\le2\c{bounded}$ is uniformly bounded in $\L^\infty(\Omega_T)$.
Finally, the Lebesgue dominated convergence theorem thus applies and proves even strong convergence of $\operator(\m_\ell)$ to $\operator(\m)$ in $\L^2(\Omega_T)$.
\end{proof}
\subsection{Notation and function spaces}
This section collects the notational and mathematical preliminaries needed for the discretization of the stray field (Section~\ref{section:fredkinkoehler}) as well as the multiscale contribution (Section~\ref{sec:multi}).
\subsubsection{Function spaces and trace operators}
\revision{By $\gamma_j^{\rm int}:H^1(\Omega_j)\to H^{1/2}(\Gamma_j)$, 
we denote the interior trace operator 
on $\Gamma_j=\partial\Omega_j$, i.e., $\gamma_j^{\rm int} = v|_{\Gamma_j}$ 
for functions
$v\in C(\overline\Omega_j)$. Likewise, $\gamma_j^{\rm ext}$ denotes the exterior
trace operator.}
Let $H_*^1(\Omega_j):=\set{v\in H^1(\Omega_j)}{\dual{v}{1}_{\Omega_j} = 0}$ and
$H_0^1(\Omega_j):=\set{v\in H^1(\Omega_j)}{\gamma_j^{\rm int}v = 0}$.
\par \revision{With the unit normal vector $\normal_j$ 
on $\Gamma_j$ which points from $\Omega_j$ to \revision{$\R^3\backslash\overline\Omega_j$}, we 
denote by $\delta_j^{\rm int}$ resp.\ $\delta_j^{\rm ext}$ the interior resp.\
exterior normal derivative with respect to $\normal_j$. These are formally defined by the first Green's formula for functions $v\in H^1(\Omega)$ with 
$\Delta v\in L^2(\Omega)$.
For smooth functions, it holds $\delta_j^{\rm int}v = \nabla v\cdot\normal_j = \delta_j^{\rm ext}v$.}
\par Let $\TT_h^{\Omega_j}$ denote a \revision{quasi-uniform and} conforming triangulation of $\Omega_j$ into tetrahedra $T\in\TT_h^{\Omega_j}$ with mesh-size $h\simeq{\rm diam}(T)$.
We denote by $\SS^1(\TT_h^{\Omega_j})$ the space of piecewise affine and globally continuous functions on $\TT_h^{\Omega_j}$.
We define the discrete function spaces 
$\SS_*^1(\TT_h^{\Omega_j}) = H_*^1(\Omega_j) \cap \SS^1(\TT_h^{\Omega_j})$ 
resp.\ $\SS_0^1(\TT_h^{\Omega_j}) = H_0^1(\Omega_j) \cap \SS^1(\TT_h^{\Omega_j})$.
\par The triangulation $\TT_h^{\Omega_j}$ induces a conforming triangulation of the boundary 
which is denoted by $\TT_h^{\Omega_j}|_{\Gamma_j}$.
Additionally, we define the discrete space $\PP^0(\TT_h^{\Omega_j}|_{\Gamma_j}) = \set{ \psi}{ \psi|_E \,\text{constant for all }
E\in\TT_h^{\Omega_j}|_{\Gamma_j} }$ of all piecewise constant functions on the boundary.
\par Finally, for Banach spaces $X$ and $Y$, $L(X,Y)$ denotes the space of all linear and continuous operators $S:X\to Y$.
\subsubsection{Integral operators and mapping properties}
\label{section:bio}
The following applications need two integral operators for either $\Gamma_j$, namely the double-layer potential $\widetilde K_j$ and the simple-layer potential
$\widetilde V_j$, which formally read
\begin{align*}
  (\widetilde K_j v)(x) &= \frac1{4\pi} \int_{\Gamma_j} \frac{(x-y)\cdot\normal(y)}{|x-y|^3}
  v(y)\,d\Gamma(y), \\
  (\widetilde V_j \phi)(x) &= \frac1{4\pi} \int_{\Gamma_j} \frac1{|x-y|} \phi(y) \,d\Gamma(y),
\end{align*}
for all $x\in\R^3\backslash\Gamma_j$. These operators may be extended to bounded, linear operators
$\widetilde K_j : H^{1/2}(\Gamma_j) \to H^1(\R^3\backslash\Gamma_j)$ and $\widetilde V_j :
H^{-1/2}(\Gamma_j) \to \revision{H^1_{\ell oc}(\R^3)}$, see, e.g., Refs.~\refcite{hw}, \refcite{mclean}, \refcite{sauschwa,s}. 
There holds
\begin{align}\label{eq:intop_laplace}
  \Delta \widetilde K_j v = 0 = \Delta \widetilde V_j \phi \quad\text{on }\R^3\backslash\Gamma_j
  \quad\text{and}\quad \widetilde K_jv, \widetilde V_j\phi \in C^\infty(\R^3\backslash\Gamma_j).
\end{align}
Via restriction to the boundary $\Gamma_j$, one obtains
\begin{align*}
 \gamma_j^{\rm int}\widetilde K_j v = (K_j-1/2)v 
 \quad\text{and}\quad 
 \gamma_j^{\rm int}\widetilde V_j\phi = V_j\phi,
\end{align*}
where the operators $K_j:H^{1/2}(\Gamma_j) \to H^{1/2}(\Gamma_j)$ and $V_j : H^{-1/2}(\Gamma_j) \to
H^{1/2}(\Gamma_j)$ coincide formally with $\widetilde K_j$ and $\widetilde V_j$, but are evaluated
on the boundary $\Gamma_j$. There hold the following jump properties across $\Gamma_j$, see, e.g., Ref.~\refcite{sauschwa}, Theorem~3.3.1:
\revision{\begin{align*}
 \gamma_j^{\rm ext}\widetilde K_j v - \gamma_j^{\rm int}\widetilde K_j v &= v, 
 & \delta_j^{\rm ext}\widetilde K_j v - \delta_j^{\rm int}\widetilde K_j v &= 0, 
 \\
 \gamma_j^{\rm ext}\widetilde V_j \phi - \gamma_j^{\rm int}\widetilde V_j\phi &= 0, 
 & \delta_j^{\rm ext} \widetilde V_j\phi - \delta_j^{\rm int} \widetilde V_j \phi &= -\phi.
\end{align*}}
\subsection{\revision{Strongly} monotone operators}
\label{sec:monotone}
We consider the frame of the Browder-Minty 
theorem, see Ref.~\refcite{zeidler}, Section~26.2: Let $X$ be a separable Hilbert space \revision{with dual space $X^*$}, 
$A: X \rightarrow X^*$ be a \revision{strongly} monotone and hemicontinuous (non-linear) operator, and $b \in X^*$.
Under these assumptions, the Browder-Minty theorem states that the operator equation
\begin{align}\label{eq:browderminty}
A w = b
\end{align}
has a unique solution $w \in X$. Arguing as in the original proof, one has the 
following: For $h>0$, let $X_h \subseteq X$ be finite dimensional subspaces of 
$X$ with $X_h \subseteq X_{h'}$ for $h>h'$ and
$\overline{\bigcup_{h>0}X_h} = X$. Let $b_h \in X_h^*$. 
Then, the Galerkin formulation
\begin{align*}
\dual{A w_h}{v_h}_{X^* \times X} = \dual{b_h}{v_h}_{X^* \times X} \quad \text{ for all } v_h \in X_h
\end{align*}
admits a unique solution $w_h \in X_h$. Provided 
$\norm{b_h}{X_h^*} \le M < \infty$ for all $h>0$, the sequence of 
Galerkin  solutions is bounded, i.e., $\norm{w_h}{X_h}\le C < \infty$ for all
$h>0$, and the $h$-independent constant $C>0$ depends only on $M$ and the 
coercivity \revision{constant} of $A$. In particular, the sequence $\left\{w_h\right\}_{h>0}$ \revision{admits a 
weakly convergent subsequence in $X$ with limit} $w \in X$.
If $b_h \to b$ strongly in $X^*$ for $h \to 0$, this limit solves the operator equation~\eqref{eq:browderminty}. 
Finally, \revision{strong} monotonicity implies that there even holds strong convergence $w_h \to w$ in $X$ of the entire sequence.
\par This framework is now used in the following lemma which guarantees the assumptions~\eqref{assumption:chi:bounded}--\eqref{assumption:chi:convergence} of Theorem~\ref{theorem} for certain energy contributions:
\begin{lemma}\label{prop:nonlin:conv}
\revision{Suppose that $X$ and $A:X\to X^*$ satisfy the foregoing assumptions.}
Let $Y$ be a Banach space and let $S, S_h \in L\left(X,\L^2(\revision{\Omega_1})\right)$, 
and $R, R_h \in L\big(\revision{\H^{1-\eps}(\Omega_1)} \times Y, X^*\big)$ 
\revision{for some $0\le\eps\le1$}
with 
\begin{align}
S_h x \weakto Sx
&\quad\text{weakly in $\L^2(\Omega_1)$ for all $x \in X$},
\label{prop:nonlinear:S}\\
R_h (\n, y) \rightarrow R (\n, y)
&\quad\text{strongly in $X^*$ for all $\n \in \revision{\H^{1-\eps}(\Omega_1)}, y \in Y$},
\label{prop:nonlinear:R}
\end{align}
and $\operator:=SA^{-1}R:\H^1(\Omega_1)\times Y\to\L^2(\Omega_1)$.
For $h > 0$, $\n\in\revision{\H^1(\Omega_1)}$, and $y \in Y$, define $\operator_h(\n, y):= S_h u_h$, where $u_h$ is the unique solution of 
\begin{align}\label{eq:multiscale:galerkin}
\dual{A u_h}{v_h}_{X^*\times X} = \dual{R_h (\n,y)}{v_h}_{X^*\times X} \quad \text{ for all } v_h \in X_h.
\end{align}
\revision{For all $y\in Y$, it then} holds that 
\begin{align}\label{eq:pi_bounded}
 \norm{\operator_h(\n, y)}{\L^2(\Omega_1)} 
 \le \c{multiscale}\revision{\,(1+\norm{\nabla\n}{\L^2(\Omega)})}.
 \end{align}
for all $\n \in \revision{\H^1(\Omega_1)}$ with $|\n| \le 1$ and for all $h>0$. The constant $\setc{multiscale} > 0$ does not depend \revision{on $h$} and $\n$, but only on \revision{$A$, $\norm{y}Y$},
$\Omega_1$, \revision{and the operators $S_h$ and $R_h$.} 
Moreover, suppose that $\revision{\norm{\m_{hk}^-}{L^2(\H^1)}+\norm{\zeta_{hk}^-}{L^\infty(Y)}}\le\setc{dp:bounded}$ and
$(\m_{hk}^-, \zeta_{hk}^-) \rightarrow (\m, \zeta)$ \revision{strongly} in $L^2\big([0,T]; \L^2(\Omega_1) \times Y\big) = L^2(\L^2(\Omega_1) \times Y)$ \revision{for some subsequence as $(h,k) \rightarrow (0,0)$. Then, 
\begin{align}\label{eq:lemma}
\operator_h(\m_{hk}^-, \zeta_{hk}^-) \rightharpoonup \operator(\m, \zeta)
\quad\text{weakly in $\L^2(\Omega_T)$}
\end{align}
 for the same subsequence.}
\end{lemma}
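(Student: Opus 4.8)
The plan is to establish the uniform bound \eqref{eq:pi_bounded} first and then deduce the weak convergence \eqref{eq:lemma}. For \eqref{eq:pi_bounded}, note that the uniform boundedness principle applied to \eqref{prop:nonlinear:S} and \eqref{prop:nonlinear:R} gives $\sup_{h>0}\|S_h\|_{L(X,\L^2(\Omega_1))}<\infty$ and $\sup_{h>0}\|R_h\|_{L(\H^{1-\eps}(\Omega_1)\times Y,X^*)}<\infty$. Testing the Galerkin problem \eqref{eq:multiscale:galerkin} with $v_h=u_h$ and using strong monotonicity of $A$ in the form $\dual{Au_h}{u_h}_{X^*\times X}\ge c_A\|u_h\|_X^2-\|A0\|_{X^*}\|u_h\|_X$, I would obtain $\|u_h\|_X\lesssim 1+\|R_h(\n,y)\|_{X^*}\lesssim 1+\norm{\n}{\H^{1-\eps}(\Omega_1)}+\norm{y}{Y}$; since $|\n|\le1$ and $\H^1(\Omega_1)\hookrightarrow\H^{1-\eps}(\Omega_1)$ continuously, this yields $\|u_h\|_X\lesssim 1+\norm{\nabla\n}{\L^2(\Omega_1)}$, hence $\norm{\operator_h(\n,y)}{\L^2(\Omega_1)}=\|S_hu_h\|_{\L^2(\Omega_1)}\lesssim 1+\norm{\nabla\n}{\L^2(\Omega_1)}$ with a constant depending only on the coercivity constant of $A$, $\|A0\|_{X^*}$, the uniform operator bounds, $\norm{y}{Y}$, and $\Omega_1$. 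Integrating the square over $[0,T]$ and using $|\m_{hk}^-|\le1$ together with $\norm{\m_{hk}^-}{L^2(\H^1)}+\norm{\zeta_{hk}^-}{L^\infty(Y)}\le\c{dp:bounded}$ then shows that $\{\operator_h(\m_{hk}^-,\zeta_{hk}^-)\}$ is bounded in $\L^2(\Omega_T)$.

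For \eqref{eq:lemma}, let $u_{hk}^-(t)\in X_h$ denote the solution of \eqref{eq:multiscale:galerkin} with data $(\m_{hk}^-(t),\zeta_{hk}^-(t))$, so $\operator_h(\m_{hk}^-(t),\zeta_{hk}^-(t))=S_hu_{hk}^-(t)$. Since the sequence is bounded in the Hilbert space $\L^2(\Omega_T)$, it suffices to show that every subsequence has a further subsequence converging weakly to $\operator(\m,\zeta)$. Fixing a subsequence, I would first upgrade $\m_{hk}^-\to\m$ (strong in $L^2(\L^2)$) to strong convergence in $L^2([0,T];\H^{1-\eps}(\Omega_1))$ via the interpolation inequality $\norm{v}{\H^{1-\eps}(\Omega_1)}\lesssim\norm{v}{\L^2(\Omega_1)}^{\eps}\norm{v}{\H^1(\Omega_1)}^{1-\eps}$, H\"older's inequality in time, and the uniform $L^2(\H^1)$-bound; passing to a further subsequence, $\m_{hk}^-(t)\to\m(t)$ in $\H^{1-\eps}(\Omega_1)$ and $\zeta_{hk}^-(t)\to\zeta(t)$ in $Y$ for a.e.\ $t$, and then $R_h(\m_{hk}^-(t),\zeta_{hk}^-(t))\to R(\m(t),\zeta(t))$ in $X^*$ for a.e.\ $t$ (using the uniform bound on $\|R_h\|$ and \eqref{prop:nonlinear:R}). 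For each such $t$, the right-hand sides of \eqref{eq:multiscale:galerkin} converge strongly in $X^*$, so the Browder--Minty framework recalled above gives strong convergence $u_{hk}^-(t)\to A^{-1}R(\m(t),\zeta(t))=:u(t)$ in $X$; writing $S_hu_{hk}^-(t)=S_h(u_{hk}^-(t)-u(t))+S_hu(t)$, bounding the first summand by $\sup_h\|S_h\|\,\|u_{hk}^-(t)-u(t)\|_X\to0$ and using \eqref{prop:nonlinear:S} for the second, I obtain $\operator_h(\m_{hk}^-(t),\zeta_{hk}^-(t))\weakto Su(t)=\operator(\m(t),\zeta(t))$ weakly in $\L^2(\Omega_1)$ for a.e.\ $t$. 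To pass to $\Omega_T$, fix $w\in\L^2(\Omega_T)$: the scalar functions $t\mapsto\dual{\operator_h(\m_{hk}^-(t),\zeta_{hk}^-(t))}{w(t)}_{\Omega_1}$ converge a.e.\ to $t\mapsto\dual{\operator(\m(t),\zeta(t))}{w(t)}_{\Omega_1}$, and by \eqref{eq:pi_bounded} and Cauchy--Schwarz they satisfy, for every measurable $E\subseteq[0,T]$, $\int_E|\dual{\operator_h(\m_{hk}^-(t),\zeta_{hk}^-(t))}{w(t)}_{\Omega_1}|\,dt\lesssim\int_E\norm{w(t)}{\L^2(\Omega_1)}\,dt+\big(\int_E\norm{w(t)}{\L^2(\Omega_1)}^2\,dt\big)^{1/2}\norm{\nabla\m_{hk}^-}{L^2(\L^2)}$, which tends to $0$ as $|E|\to0$ uniformly in $h,k$ since $\norm{\nabla\m_{hk}^-}{L^2(\L^2)}\le\c{dp:bounded}$ and $t\mapsto\norm{w(t)}{\L^2(\Omega_1)}$ lies in $L^2([0,T])$. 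Hence the family is uniformly integrable, Vitali's theorem applies, $\int_0^T\dual{\operator_h(\m_{hk}^-,\zeta_{hk}^-)}{w}_{\Omega_1}\to\int_0^T\dual{\operator(\m,\zeta)}{w}_{\Omega_1}$, and since $w$ was arbitrary this subsequence converges weakly in $\L^2(\Omega_T)$ to $\operator(\m,\zeta)$; as the limit is always the same, so does the whole given subsequence, which is \eqref{eq:lemma}.

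The non-linear solve itself is not the difficulty, since it is taken care of by the recalled Browder--Minty/Galerkin argument once the right-hand sides $R_h(\m_{hk}^-(t),\zeta_{hk}^-(t))$ are shown to converge in $X^*$. The two delicate points I expect are: (i) upgrading the strong $L^2(\L^2)$-convergence of $\m_{hk}^-$ to strong convergence in $L^2([0,T];\H^{1-\eps}(\Omega_1))$, so that continuity of $R_h$ on $\H^{1-\eps}(\Omega_1)\times Y$ becomes usable --- this is exactly where the interpolation estimate, and hence $\eps>0$, enters; and (ii) promoting the almost-everywhere-in-time weak $\L^2(\Omega_1)$-convergence of $\operator_h(\m_{hk}^-,\zeta_{hk}^-)$ to weak $\L^2(\Omega_T)$-convergence, where a plain dominated-convergence argument fails because the natural pointwise bound on $\norm{\operator_h(\m_{hk}^-(t),\zeta_{hk}^-(t))}{\L^2(\Omega_1)}$ involves $\norm{\nabla\m_{hk}^-(t)}{\L^2(\Omega_1)}$, which is not uniformly bounded in $t$; the uniform-integrability/Vitali argument above circumvents this by pairing the offending factor against the $h,k$-uniform bound $\norm{\nabla\m_{hk}^-}{L^2(\L^2)}\le\c{dp:bounded}$ and using the fixed test function to provide smallness as $|E|\to0$.
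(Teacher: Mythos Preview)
Your proof is correct and follows the same overall strategy as the paper: Banach--Steinhaus for uniform bounds on $S_h$ and $R_h$, strong monotonicity tested with $v_h=u_h$ for the a~priori estimate, interpolation between $L^2(\L^2)$ and $L^2(\H^1)$ to upgrade to strong $L^2(\H^{1-\eps})$-convergence, extraction of an a.e.-in-time subsequence, and Browder--Minty for the pointwise strong convergence $u_{hk}^-(t)\to u(t)$ in $X$, followed by the splitting $S_hu_{hk}^-(t)=S_h(u_{hk}^-(t)-u(t))+S_hu(t)$.

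The one substantive difference is the final step from a.e.-in-$t$ weak $\L^2(\Omega_1)$-convergence to weak $\L^2(\Omega_T)$-convergence. The paper simply invokes dominated convergence, citing \eqref{eq:pi_bounded} and the $L^2(\H^1)$-bound; you correctly identify that this is delicate, since the dominating function $(1+\norm{\nabla\m_{hk}^-(t)}{\L^2(\Omega_1)})\norm{w(t)}{\L^2(\Omega_1)}$ depends on $h,k$, and use a uniform-integrability/Vitali argument instead. The paper's step can in fact be justified: the proof of \eqref{eq:pi_bounded} actually yields the sharper bound $\norm{\operator_h(\n,y)}{\L^2(\Omega_1)}\lesssim 1+\norm{\n}{\H^{1-\eps}(\Omega_1)}$, and strong $L^2(\H^{1-\eps})$-convergence permits a further subsequence with $\norm{\m_\ell(t)}{\H^{1-\eps}(\Omega_1)}\le G(t)$ for a fixed $G\in L^2([0,T])$, giving a genuine dominating function. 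Your Vitali route avoids this extra extraction and is arguably cleaner.
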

\begin{proof}
The Banach-Steinhaus theorem implies uniform boundedness \revision{of the operator norms
$C_S := \sup_{h>0}\norm{S_h:X\to\L^2(\Omega_1)}{}<\infty$ and $C_R:= \sup_{h>0}\norm{R_h:\H^{1-\eps}(\Omega_1)\times Y\to X^*}{}<\infty$.} 
For fixed $\n\in\revision{\H^1(\Omega_1)}$ 
with $|\n|\le 1$, 
$y \in Y$, and $b_h:=R_h(\n,y)$, this implies
\begin{align*}
\norm{b_h}{\revision{X^*}} \le C_R \norm{(\n, y)}{\revision{\H^{1-\eps}(\Omega_1)}\times Y} \lesssim \big(\revision{\norm{\n}{\H^1(\Omega_1)}} + \revision{\norm{y}Y}\big)=: M < \infty.
\end{align*} 
\revision{Strong monotonicity of $A$ shows
\begin{align*}
 \norm{u_h}{X}^2 \lesssim \dual{Au_h-A(0)}{u_h}_{X^*\times X}
 &= \dual{b_h-A(0)}{u_h}_{X^*\times X}
 \\&\lesssim \norm{b_h-A(0)}{X^*}\norm{u_h}X.
\end{align*}
Thus, we infer with $|\n|\le1$
\begin{align*}
\norm{u_h}{X}\lesssim 
\norm{\nabla\n}{\L^2(\Omega_1)} + |\Omega_1|^{1/2} + \norm{y}Y
+ \norm{A(0)}{X^*}
\lesssim1+\norm{\nabla\n}{\L^2(\Omega_1)},
\end{align*}
where the hidden constant $C>0$ depends only on $A$, $C_R$, and $\norm{y}Y$}.
Consequently, this proves~\eqref{eq:pi_bounded} with $\c{multiscale}=CC_S$.

Next, we show that $\operator_h(\n_h, y_h) \weakto \operator_h(\n, y)$ weakly 
in $\L^2(\Omega_1)$ as $h\to0$ provided that $(\n_h, y_h) \to (\n, y)$ strongly in 
$\revision{\H^{1-\eps}(\Omega_1)} \times Y$.
Assumption~\eqref{prop:nonlinear:R} and the uniform boundedness of $R_h$ imply that
$R_h(\n_h,y_h) = R_h(\n,y) - R_h\big((\n-\n_h, y-y_h)\big) \to R(\n,y)$ strongly in $X^*$ as $h\to0$.
Therefore, the Browder-Minty theorem for \revision{strongly} monotone operators guarantees 
$u_h\to u$ strongly in $X$, where $u = A^{-1}R(\n,y)$
and $u_h\in X_h$ solves~\eqref{eq:multiscale:galerkin} with $(\n, y)$ replaced by $(\n_h, y_h)$.
The convergence assumption~\eqref{prop:nonlinear:S} and the uniform boundedness of $S_h$ thus show
$\operator_h(\n_h, y_h) = S_hu_h = S_hu - S_h(u-u_h) \weakto Su = \operator(\n, y)$
weakly in $\L^2(\Omega_1)$ as $h\to0$.
\par Finally, we prove $\operator_h(\m_{hk}^-, \zeta_{hk}^-)\weakto\operator(\m, \zeta)$
\revision{weakly} in $\L^2(\Omega_T)$ \revision{for a subsequence} as $(h,k)\to(0,0)$.
To that end, we choose sequences $h_\ell\to0$, $k_\ell\to0$ such that 
$(\m_\ell,\zeta_\ell):=(\m_{h_\ell k_\ell}^-, \zeta_{h_\ell k_\ell}^-)$ converges strongly in $L^2\big(\L^2(\Omega_1) \times Y\big)$ 
to $(\m, \zeta)$. 
\revision{According to interpolation theory (see, e.g., Ref.~\refcite{bl}, Section~5), interpolation
of $\L^2(\Omega_T)=L^2(\L^2)$ and $L^2(\H^1)$ yields $L^2(\H^s)$ for all $0<s<1$. 
From strong convergence
$\m_{hk}^-\to\m$ in $L^2(\L^2)$ and boundedness $\norm{\m_{hk}^-}{L^2(\H^1)}\lesssim1$, 
we thus infer strong convergence $\m_{hk}^-\to\m$ in $L^2(\H^{1-\eps})$.}
By extracting a further subsequence (not relabeled), we may assume that
$\m_\ell(t) \to \m(t)$ strongly in $\revision{\H^{1-\eps}}(\Omega_1)$ as well as 
$\zeta_\ell(t) \to \zeta(t)$ \revision{strongly} in $Y$, for almost all times $t$.
Define $\operator_\ell:=\operator_{h_\ell}$ and let $\pphi\in\L^2(\Omega_T)$. Then,
\begin{align*}
 \dual{\operator_\ell (\m_\ell, \zeta_\ell)-\operator(\m, \zeta)}{\pphi}_{\Omega_T}
  = \int_0^T \dual{\operator_\ell(\m_\ell(t),\zeta_\ell(t))-\operator(\m(t),\zeta(t))}{\pphi(t)}_{\Omega_1}\,dt.
\end{align*}
\revision{Due to} $\operator_\ell(\m_\ell(t), \zeta_\ell(t)) \weakto \operator(\m(t), \zeta(t))$ \revision{weakly in $\L^2(\Omega_1)$} as 
$\ell\to\infty$ for almost all $t \in [0,T]$, we see
pointwise convergence of the integrand to zero. According to~\eqref{eq:pi_bounded} and the assumption \revision{$\norm{\m_\ell}{L^2(\H^1)}+\norm{\zeta_\ell}{\L^\infty(Y)}\lesssim1$},
the Lebesgue dominated convergence theorem thus proves
\begin{align*}
\dual{\operator_{\ell}(\m_{\ell},\zeta_{\ell})-\operator(\m,\zeta)}{\pphi}_{\Omega_T} \to0
 \quad\text{as }\ell\to\infty.
\end{align*}
This concludes the proof.
\end{proof}
\begin{remark}
{\rm(i)} Similar arguments as in the proof of Lemma~\ref{prop:nonlin:conv} reveal 
that strong convergence $S_hx\to Sx$ in~\eqref{prop:nonlinear:S} also results in
strong convergence $\operator_h(\m_{hk}^-,\zeta_{hk}^-)\to\operator(\m,\zeta)$ in $\L^2(\Omega_T)$ 
as $h,k\to0$.\\
{\rm(ii)} The abstract framework applies, in particular, to linear contributions 
$\operator_h = R_h$ of the effective field $\heff$, where $X=\L^2(\Omega_1)$, $Y = \{0\}$, 
and the operators $A=A_h$ as well as $S=S_h$ are just the identities. In this case, $\zeta_{hk}^- = 0$ for all $(h,k) > 0$. 
In particular, we may therefore write $\operator_h(\m_{hk}^-, \zeta_{hk}^-) = \operator_h(\m_{hk}^-)$.\\
{\rm(iii)} For the multiscale approach, we use \revision{$Y = \L^2(\Omega_2)$}, $\zeta_{hk}^- = \hext_{hk}^-$, and $\zeta = \hext$, respectively.
\end{remark}
\revision{%
\begin{remark}
Provided that $R, R_h \in L\big(\L^2(\Omega_1)\times Y, X^*\big)$ with
$R_h (\n, y) \rightarrow R (\n, y)$
strongly in $X^*$ for all $(\n,y) \in \revision{\L^2(\Omega_1)}\times Y$
in~\eqref{prop:nonlinear:R}, the assumptions on the nonlinear operator $A$
can be weakened: Instead of strong monotonicity, uniform monotonicity 
of $A$ is sufficient. Then, $\norm{b_h}{X^*}\le C_R\norm{\n}{\L^2(\Omega)}
\le C_R|\Omega|^{1/2}=:M$ proves $\norm{u_h}{X}\le C$ for some constant
$C = C(M)>0$, see
Ref.~\refcite{zeidler}, Section~26.2. The remaining part of the proof of
Lemma~\ref{prop:nonlin:conv} remains unchanged with the formal choice $\eps=1$.
\end{remark}%
}
\subsection{Application: Hybrid FEM-BEM \revision{stray field computations}}\label{section:fredkinkoehler}%
\revision{In the following, we present the hybrid FEM-BEM approaches of \textsc{Fredkin} and \textsc{Koehler}, see Ref.~\refcite{fredkinkoehler}, 
and \textsc{Garc\'ia-Cervera} and \textsc{Roma}, see Ref.~\refcite{gcr},
for the approximate computation of the stray field.
We show that it satisfies the assumptions of Lemma~\ref{prop:nonlin:conv}.}
Given any $\m\in \L^2(\Omega_1)$, the non-dimensional form of~\eqref{eq:u1} reads
\begin{align*}
 \begin{array}{rcll}
 \Delta u_1 &=& \nabla \cdot \m \quad&\text{in } \Omega_1, \\
 \Delta u_{1} &=& 0\quad&\text{in }\R^3\backslash\overline\Omega_1,\\{}
 \revision{\gamma_1^{\rm ext}u_{1}-\revision{\gamma_1^{\rm int}}u_{1}} &=& 0&\text{on }\Gamma_1,\\{}
 \revision{\delta_1^{\rm ext}u_{1}-\delta_1^{\rm int}u_{1}}
 &=& -\m\cdot\revision{\normal_1}\quad&\text{on }\Gamma_1,\\
 u_{1}(x) &=& \OO(1/|x|)&\text{as }|x|\to\infty,
\end{array}
\end{align*}
\revision{where the target for our LLG integrator is the stray field $\operator(\m)=\nabla u_1$ on $\Omega_1$.}
\subsubsection{Fredkin-Koehler approach}\label{section:stray field:cont}
\noindent
\revision{The approach of \textsc{Fredkin} and \textsc{Koehler} (Ref.~\refcite{fredkinkoehler})
relies on the superposition principle
\begin{align}\label{dp:superposition}
 u_1 = \begin{cases} 
 u_{11}+u_{12}&\text{in }\Omega_1,\\
 u_{12}&\text{in }\R^3\backslash\overline\Omega_1,
 \end{cases}
\end{align}
where $u_{11}\in H^1_*(\Omega_1)$ satisfies
\begin{align}\label{eq:fk1}
 \dual{\nabla u_{11}}{\nabla v}_{\Omega_1}
 = \dual{\m}{\nabla v}_{\Omega_1}
 \quad\text{for all }v\in H^1_*(\Omega_1)
\end{align}
and $u_{12} = \widetilde K_1 \revision{\gamma_1^{\rm int}}u_{11}\in H^1(\R^3\backslash\Gamma_1)$. Since the  integration of LLG only requires 
$u_1$ on $\Omega_1$, we note that $u_{12}\in H^1(\Omega_1)$ solves
\begin{align}\label{eq:fk2}
 \revision{\gamma_1^{\rm int}}u_{12} = (K_1-1/2)\revision{\gamma_1^{\rm int}}u_{11}
 \text{ and }
 \dual{\nabla u_{12}}{\nabla v}_{\Omega_1} = 0
 \text{ for all }v\in H^1_0(\Omega_1).
\end{align}}%
To discretize the equations~\eqref{eq:fk1}--\eqref{eq:fk2}, let 
$u_{11h}\in\SS_*^1(\TT_h^{\Omega_1})$ be the unique FE solution of
\begin{align}\label{eq:fk1h}
 \dual{\nabla u_{11h}}{\nabla v_h}_{\Omega_1}
 = \dual{\m}{\nabla v_h}_{\Omega_1}
 \text{ for all }v_h\in\SS_*^1(\TT_h^{\Omega_1}).
\end{align}
Since an FE approximation $u_{12h}\in\SS^1(\TT_h^{\Omega_1})$ of~\eqref{eq:fk2} cannot satisfy
continuous Dirichlet data $(K_1-1/2)u_{11h}$, we need to discretize \revision{them}. To
that end, let $I_h^{\Omega_1}: H^1(\Omega_1) \to \SS^1(\TT_h^{\Omega_1})$ be the Scott-Zhang
projection from Ref.~\refcite{scottzhang}. Since $I_h^{\Omega_1}$ is $H^1$-stable and preserves discrete boundary data, it induces a stable projection $I_h^{\Gamma_1}: H^{1/2}(\Gamma_1) \to \SS^1(\TT_h^{\Omega_1}|_{\Gamma_1})$ with $\revision{\gamma_1^{\rm int}}I_h^{\Omega_1}v = I_h^{\Gamma_1}(\revision{\gamma_1^{\rm int}}v)$ for all $v \in H^1(\Omega_1)$, \revision{see, e.g., Ref.~\refcite{hypsing3d}.}
\revision{Let $u_{12h}\in\SS^1(\TT_h^{\Omega_1})$ 
be the unique solution of the inhomogeneous Dirichlet problem
\begin{align}\label{eq:fk2h}
 \revision{\gamma_1^{\rm int}}u_{12h} = I_h^{\Gamma_1}(K_1-1/2)\revision{\gamma_1^{\rm int}}u_{11h}
 \text{ and }
 \dual{\nabla u_{12h}}{\nabla v_h}_{\Omega_1} = 0
 \quad\text{for all }v_h\in\SS_0^1(\TT_h^{\Omega_1}).
\end{align}
The resulting approximate stray field $\operator_h(\m)=\nabla u_{11h}+\nabla u_{12h}$ is indeed covered by our approach from Section~\ref{sec:monotone}.}

\begin{proposition}\label{prop:stray field}
The operator $\operator_h(\m) = R_h(\m) := \nabla u_{11h} + \nabla u_{12h}$ defined
via~\eqref{eq:fk1h}--\eqref{eq:fk2h} satisfies $\operator_h\in
L(\L^2(\Omega_1);\L^2(\Omega_1))$, and convergence~\eqref{prop:nonlinear:R}
towards $\operator(\m) = R(\m) :=\nabla u_1$ holds even strongly in $\L^2(\Omega_1)$. In particular, Lemma~\ref{prop:nonlin:conv} applies with $X:=\L^2(\Omega_1)$ and $Y:=\{0\}$ and guarantees the assumptions~\eqref{assumption:chi:bounded}--\eqref{assumption:chi:convergence} of Theorem~\ref{theorem}.
\end{proposition}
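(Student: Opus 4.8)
The plan is to cast $\operator_h = R_h$ into the abstract framework of Lemma~\ref{prop:nonlin:conv} in the degenerate situation of Remark~(ii) following it, i.e.\ $X=\L^2(\Omega_1)$, $Y=\{0\}$, $A=S=\mathrm{id}$, and $\eps=1$, so that $\operator=R$ with $R(\m)=\nabla u_1$ on $\Omega_1$. Two things then have to be checked: that $R_h\in L(\L^2(\Omega_1);\L^2(\Omega_1))$ with an $h$-independent operator norm, and that $R_h(\m)\to R(\m)$ \emph{strongly} in $\L^2(\Omega_1)$ for every fixed $\m$. Assumption~\eqref{prop:nonlinear:S} holds trivially since $S=S_h$ is the identity, and once~\eqref{prop:nonlinear:R} is established, Lemma~\ref{prop:nonlin:conv} delivers~\eqref{eq:pi_bounded} -- hence~\eqref{assumption:chi:bounded} -- and, by Remark~(i) after that lemma, the convergence~\eqref{assumption:chi:convergence}; altogether this verifies the hypotheses of Theorem~\ref{theorem}.

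For boundedness, linearity of $R_h$ is clear since $\m\mapsto u_{11h}$ and $u_{11h}\mapsto u_{12h}$ are linear. Testing~\eqref{eq:fk1h} with $v_h=u_{11h}$ gives $\norm{\nabla u_{11h}}{\L^2(\Omega_1)}\le\norm{\m}{\L^2(\Omega_1)}$, and the Poincar\'e inequality on $H^1_*(\Omega_1)$ upgrades this to $\norm{u_{11h}}{H^1(\Omega_1)}\lesssim\norm{\m}{\L^2(\Omega_1)}$, both uniformly in $h$. For $u_{12h}$ I would invoke the $H^1$-stability of the discrete harmonic extension (obtained by lifting the discrete boundary datum through a continuous right-inverse of the trace composed with the Scott-Zhang projection $I_h^{\Omega_1}$, which preserves discrete boundary values and is $H^1$-stable), yielding $\norm{\nabla u_{12h}}{\L^2(\Omega_1)}\lesssim\norm{I_h^{\Gamma_1}(K_1-1/2)\gamma_1^{\rm int}u_{11h}}{H^{1/2}(\Gamma_1)}$. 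Stability of $I_h^{\Gamma_1}$ on $H^{1/2}(\Gamma_1)$, boundedness of $K_1-1/2:H^{1/2}(\Gamma_1)\to H^{1/2}(\Gamma_1)$, and the trace theorem bound the right-hand side by $\norm{u_{11h}}{H^1(\Omega_1)}\lesssim\norm{\m}{\L^2(\Omega_1)}$. Adding both contributions gives $\norm{R_h(\m)}{\L^2(\Omega_1)}\lesssim\norm{\m}{\L^2(\Omega_1)}$ with $h$-independent constant; the same estimate with $I_h^{\Gamma_1}$ and $u_{11h}$ replaced by the identity and $u_{11}$ shows that $R(\m)=\nabla u_1$ is a bounded operator, the natural (continuous) limit.

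For convergence, fix $\m\in\L^2(\Omega_1)$. Equation~\eqref{eq:fk1} is a coercive symmetric variational problem and~\eqref{eq:fk1h} its Galerkin discretization, so C\'ea's lemma together with the standard density of piecewise-affine functions in $H^1_*(\Omega_1)$ (for the quasi-uniform meshes with $h\to0$) gives $u_{11h}\to u_{11}$ in $H^1(\Omega_1)$, hence $\nabla u_{11h}\to\nabla u_{11}$ in $\L^2(\Omega_1)$. For the second term I would compare $u_{12h}$ with $u_{12}=\widetilde K_1\gamma_1^{\rm int}u_{11}\in H^1(\Omega_1)$ by a first-Strang-lemma estimate for the discrete harmonic extension with perturbed Dirichlet data: its consistency error is governed by the data error $\norm{(K_1-1/2)\gamma_1^{\rm int}u_{11}-I_h^{\Gamma_1}(K_1-1/2)\gamma_1^{\rm int}u_{11h}}{H^{1/2}(\Gamma_1)}$, which I would split into $\norm{(1-I_h^{\Gamma_1})(K_1-1/2)\gamma_1^{\rm int}u_{11}}{H^{1/2}(\Gamma_1)}\to0$ (approximation property of the boundary projection induced by Scott-Zhang) plus $\norm{I_h^{\Gamma_1}(K_1-1/2)\gamma_1^{\rm int}(u_{11}-u_{11h})}{H^{1/2}(\Gamma_1)}\lesssim\norm{u_{11}-u_{11h}}{H^1(\Omega_1)}\to0$, while the best-approximation part vanishes because $u_{12}\in H^1(\Omega_1)$ (using an $H^1$-stable discrete lifting to respect the trace constraint). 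Hence $\nabla u_{12h}\to\nabla u_{12}$ in $\L^2(\Omega_1)$, and since $u_1|_{\Omega_1}=u_{11}+u_{12}$ by the superposition~\eqref{dp:superposition}, we obtain $R_h(\m)=\nabla u_{11h}+\nabla u_{12h}\to\nabla u_1=R(\m)$ strongly in $\L^2(\Omega_1)$, which is~\eqref{prop:nonlinear:R}.

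The main obstacle is the inhomogeneous discrete Dirichlet problem~\eqref{eq:fk2h} and its variational crime: one must carefully set up the Strang-type stability estimate for the discrete harmonic extension when the boundary datum is itself only a discrete, $I_h^{\Gamma_1}$-projected, and $h$-perturbed approximation of the continuous trace $(K_1-1/2)\gamma_1^{\rm int}u_{11}$, keeping precise track of the two norms in play -- the $H^{1/2}(\Gamma_1)$-norm in which the data converges versus the $H^1(\Omega_1)$/energy norm of the extension -- and leaning on the $H^1$-stability and approximation properties of the Scott-Zhang projection together with the mapping properties of $K_1$. The remaining ingredients (testing, Poincar\'e, C\'ea, density) are routine.
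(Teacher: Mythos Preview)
Your proposal is correct and follows essentially the same route as the paper. The only presentational differences are: (i) you verify the uniform bound on $R_h$ directly, whereas the paper leaves this to the Banach--Steinhaus step inside Lemma~\ref{prop:nonlin:conv}; and (ii) for the convergence $u_{12h}\to u_{12}$ you phrase the argument as a first Strang lemma, while the paper introduces the auxiliary discrete harmonic extension $\widetilde u_{12h}$ with boundary datum $I_h^{\Gamma_1}(K_1-1/2)\gamma_1^{\rm int}u_{11}$ and then splits $\|u_{12}-u_{12h}\|_{H^1}\le\|u_{12}-\widetilde u_{12h}\|_{H^1}+\|\widetilde u_{12h}-u_{12h}\|_{H^1}$, invoking a C\'ea-type lemma for inhomogeneous Dirichlet problems for the first term and stability of the discrete harmonic extension for the second---which is exactly your Strang decomposition written out.
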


\begin{proof}
First, note that the FE solution $u_{11h}$ of~\eqref{eq:fk1h} is a Galerkin approximation of~\eqref{eq:fk1}. Therefore, stability \revision{and density
arguments prove $\norm{u_{11}-u_{11h}}{H^1(\Omega_1)}\to 0$ as $h\to 0$.}
%
\revision{Next, we consider the unique solution $\widetilde u_{12h}\in\SS^1(\TT_h^{\Omega_1})$ of the auxiliary problem
\begin{align*}
 \revision{\gamma_1^{\rm int}}\widetilde u_{12h} = I_h^{\Gamma_1}(K_1-1/2)\revision{\gamma_1^{\rm int}}u_{11}
 \text{ and }
 \dual{\nabla \widetilde u_{12h}}{\nabla v_h}_{\Omega_1} = 0
 \quad\text{for all }v_h\in\SS_0^1(\TT_h^{\Omega_1}).
\end{align*}
Note that $\gamma_1^{\rm int}\widetilde u_{12h} = I_h^{\Gamma_1}\gamma_1^{\rm int} u_{12}$. Therefore, the C\'ea lemma for inhomogeneous Dirichlet problems (see Prop.~2.3 in Ref.~\refcite{dirichlet3d}) and density arguments prove
\begin{align*}
 \norm{u_{12}-\widetilde u_{12h}}{H^1(\Omega_1)}
 \lesssim \min_{v_h\in\SS^1(\TT_h^{\Omega_1})}\norm{u_{12}-v_h}{H^1(\Omega_1)}
 \xrightarrow{h\to0}0.
\end{align*}}
\revision{Third, stability of the inhomogeneous Dirichlet problem provides
\begin{align*} 
 \norm{u_{12h}-\widetilde u_{12h}}{H^1(\Omega_1)}
 \lesssim \norm{\gamma_1^{\rm int}(u_{11}-u_{11h})}{H^{1/2}(\Gamma_1)}
 \lesssim \norm{u_{11}-u_{11h}}{H^1(\Omega_1)},
\end{align*}
and the triangle inequality reveals
\begin{align*}
 \norm{u_{12}-u_{12h}}{H^1(\Omega_1)}
 &\le \norm{u_{12}-\widetilde u_{12h}}{H^1(\Omega_1)}
 + \norm{u_{12h}-\widetilde u_{12h}}{H^1(\Omega_1)}
 \xrightarrow{h\to0}0.
\end{align*}}%
\revision{Finally,} the triangle inequality yields
\begin{align*}
\norm{\operator_h(\m)-\operator(\m)}{\L^2(\Omega_1)} \le \norm{\nabla(u_{11}-u_{11h})}{\L^2(\Omega_1)}
+ \norm{\nabla(u_{12}-u_{12h})}{\L^2(\Omega_1)}\to0
\end{align*}
for all $\m \in X = \L^2(\Omega_1)$. 
\revision{Together with Lemma~\ref{prop:nonlin:conv}, we conclude the proof.}
\end{proof}
\revision{\begin{remark}
Instead of the Scott-Zhang projection $I_h^{\Gamma_1}$, any Cl\'ement-type 
operator $I_h^{\Gamma}:L^2(\Gamma_1)\to\SS^1(\TT_h^{\Omega_1}|_{\Gamma_1})$ can be employed. 
The assertion of Proposition~\ref{prop:stray field} holds accordingly, see Ref.~\refcite{goldenits}, Section~4.3.
We note that Ref.~\refcite{fredkinkoehler}
employs nodal interpolation which is \emph{not} suitable for the numerical 
analysis as $H^1$-functions are not continuous, in general.
\end{remark}}

\subsubsection{Garc\'ia-Cervera-Roma approach}\label{section:strayfield2:cont}

\noindent
The approach of \textsc{Garc\'ia-Cervera} and \textsc{Roma}, see Ref.~\refcite{gcr},
relies also on the superposition~\eqref{dp:superposition}, where now 
$u_{11}\in H^1_0(\Omega_1)$ satisfies
\begin{align}\label{eq:gcr1}
 \dual{\nabla u_{11}}{\nabla v}_{\Omega_1}
 = \dual{\m}{\nabla v}_{\Omega_1}
 \quad\text{for all }v\in H^1_0(\Omega_1)
\end{align}
and $u_{12} = \widetilde V_1(\m\cdot\normal_1-\delta_1^{\rm int}u_{11})
\in H^1_{\ell oc}(\R^3)$. Note that $u_{12}\in H^1(\Omega_1)$ solves
\begin{align}\label{eq:gcr2}
 \gamma_1^{\rm int}u_{12} = V_1(\m\cdot\normal_1-\delta_1^{\rm int}u_{11})
 \text{ and }
  \dual{\nabla u_{12}}{\nabla v}_{\Omega_1} = 0
 \text{ for all }v\in H^1_0(\Omega_1).
\end{align}
To discretize~\eqref{eq:gcr1}--\eqref{eq:gcr2}, we employ the $L^2$-projection 
$\Pi_h:L^2(\Gamma_1)\to\PP^0(\TT_h^{\Omega_1}|_{\Gamma_1})$ 
as well as the Scott-Zhang projection $I_h^{\Gamma_1}$
and solve for $u_{11h}\in\SS^1_0(\TT_h^{\Omega_1})$ with
\begin{align}\label{eq:gcr1h}
 \dual{\nabla u_{11h}}{\nabla v_h}_{\Omega_1}
 = \dual{\m}{\nabla v_h}_{\Omega_1}
 \quad\text{for all }v_h\in \SS^1_0(\TT_h^{\Omega_1}) 
\end{align}
and for $u_{12h}\in\SS^1(\TT_h^{\Omega_1})$ with
\begin{subequations}\label{eq:gcr2h}
\begin{align}
 &\gamma_1^{\rm int}u_{12h} = I_h^{\Gamma_1}V_1(\Pi_h(\m\cdot\normal_1)-\partial u_{11h}/\partial\normal_1),
\\&
  \dual{\nabla u_{12h}}{\nabla v_h}_{\Omega_1} = 0
 \quad\text{for all }v_h\in \SS^1_0(\TT_h^{\Omega_1}).
\end{align}
\end{subequations}
The resulting approximate stray field $\operator_h(\m)=\nabla u_{11h}+\nabla u_{12h}$ is indeed covered by our approach from Section~\ref{sec:monotone}.
Unlike the Fredkin-Koehler approach, however, the numerical analysis is
slightly more involved, since the well-posedness of~\eqref{eq:gcr2}
requires at least that the normal trace $\m\cdot\normal_1$ exists 
in $H^{-1/2}(\Gamma_1)$ which prevents to consider $\m\in\L^2(\Omega_1)$ only.

\begin{proposition}\label{prop:gcr}
There exists some $\eps>0$ such that the operator 
$\operator_h(\m) = R_h(\m) := \nabla u_{11h} + \nabla u_{12h}$ defined via~\eqref{eq:gcr1h}--\eqref{eq:gcr2h} satisfies $\operator_h\in
L(\H^{1-\eps}(\Omega_1);\L^2(\Omega_1))$ as well as convergence~\eqref{prop:nonlinear:R} towards $\operator\in L(\H^{1-\eps}(\Omega_1);\L^2(\Omega_1))$,
$\operator(\m) = R(\m) := \nabla u_1 = \nabla u_{11}+\nabla u_{12}$. In particular, Lemma~\ref{prop:nonlin:conv} applies with $X:=\L^2(\Omega_1)$ and $Y:=\{0\}$ and guarantees the assumptions~\eqref{assumption:chi:bounded}--\eqref{assumption:chi:convergence} of Theorem~\ref{theorem}.
\end{proposition}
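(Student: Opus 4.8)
The plan is to verify the hypotheses of Lemma~\ref{prop:nonlin:conv} with the choices $X=\L^2(\Omega_1)$, $Y=\{0\}$, $A=A_h=\mathrm{id}$ and $S=S_h=\mathrm{id}$, so that $\operator=R$ and $\operator_h=R_h$; then~\eqref{prop:nonlinear:S} is trivial, and it remains to show that $R$ and $R_h$ are linear and bounded from $\H^{1-\eps}(\Omega_1)$ into $\L^2(\Omega_1)$, with an $h$-independent norm bound for $R_h$, and that $R_h(\m)\to R(\m)$ strongly in $\L^2(\Omega_1)$ for every fixed $\m\in\H^{1-\eps}(\Omega_1)$, which is~\eqref{prop:nonlinear:R}. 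The admissible range is $0<\eps<1/2$: by the trace theorem this yields $\m|_{\Gamma_1}\in\H^{1/2-\eps}(\Gamma_1)$, hence $\m\cdot\normal_1\in L^2(\Gamma_1)$, which is precisely what makes $\Pi_h(\m\cdot\normal_1)$, and thus $R_h$, well-defined, and it also ensures that the interior normal derivative $\delta_1^{\rm int}u_{11}$ exists separately in $H^{-1/2}(\Gamma_1)$ (for merely $\m\in\L^2(\Omega_1)$ only the combination $(\m-\nabla u_{11})\cdot\normal_1$ is intrinsically defined). Note that the continuous operator $R$ in fact extends boundedly to all of $\L^2(\Omega_1)$ and is stated on $\H^{1-\eps}(\Omega_1)$ only for consistency with Lemma~\ref{prop:nonlin:conv}.

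For the continuous well-posedness, $u_{11}\in H^1_0(\Omega_1)$ solving~\eqref{eq:gcr1} is obtained from the Lax--Milgram lemma, with $\norm{\nabla u_{11}}{\L^2(\Omega_1)}\lesssim\norm{\m}{\L^2(\Omega_1)}$. Since $\mathrm{div}(\m-\nabla u_{11})=0$ in $\Omega_1$, the field $\m-\nabla u_{11}$ lies in $\boldsymbol{H}(\mathrm{div},\Omega_1)$, so its normal trace $g:=(\m-\nabla u_{11})\cdot\normal_1=\m\cdot\normal_1-\delta_1^{\rm int}u_{11}$ is well-defined in $H^{-1/2}(\Gamma_1)$ with $\norm{g}{H^{-1/2}(\Gamma_1)}\lesssim\norm{\m}{\L^2(\Omega_1)}$. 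The mapping properties of $\widetilde V_1$ recalled in Section~\ref{section:bio} then show that $u_{12}=\widetilde V_1 g$ restricts to an $H^1(\Omega_1)$-function which is harmonic in $\Omega_1$, satisfies $\gamma_1^{\rm int}u_{12}=V_1 g$, and obeys $\norm{u_{12}}{H^1(\Omega_1)}\lesssim\norm{g}{H^{-1/2}(\Gamma_1)}$. Hence $R:\m\mapsto\nabla u_1=\nabla u_{11}+\nabla u_{12}$ defines an element of $L(\L^2(\Omega_1);\L^2(\Omega_1))\subseteq L(\H^{1-\eps}(\Omega_1);\L^2(\Omega_1))$.

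For the discrete part I would first note that $u_{11h}\in\SS_0^1(\TT_h^{\Omega_1})$ from~\eqref{eq:gcr1h} is the Galerkin approximation of $u_{11}$, so $\norm{\nabla u_{11h}}{\L^2(\Omega_1)}\lesssim\norm{\m}{\L^2(\Omega_1)}$ and, by density, $u_{11h}\to u_{11}$ in $H^1(\Omega_1)$. Writing $g_h:=\Pi_h(\m\cdot\normal_1)-\partial u_{11h}/\partial\normal_1\in\PP^0(\TT_h^{\Omega_1}|_{\Gamma_1})$ for the discrete flux entering~\eqref{eq:gcr2h}, I would then copy the argument of Proposition~\ref{prop:stray field}: introduce the auxiliary $\widetilde u_{12h}\in\SS^1(\TT_h^{\Omega_1})$ solving the discrete Laplace problem with Dirichlet data $\gamma_1^{\rm int}\widetilde u_{12h}=I_h^{\Gamma_1}V_1 g$; the C\'ea lemma for inhomogeneous Dirichlet problems (Ref.~\refcite{dirichlet3d}) together with a density argument give $\norm{u_{12}-\widetilde u_{12h}}{H^1(\Omega_1)}\to0$, while stability of the discrete Dirichlet problem, $H^{1/2}(\Gamma_1)$-stability of $I_h^{\Gamma_1}$, and boundedness of $V_1:H^{-1/2}(\Gamma_1)\to H^{1/2}(\Gamma_1)$ yield $\norm{u_{12h}-\widetilde u_{12h}}{H^1(\Omega_1)}\lesssim\norm{g_h-g}{H^{-1/2}(\Gamma_1)}$. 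The same chain applied to $g_h$ rather than to $g_h-g$ produces the $h$-independent bound $\norm{R_h(\m)}{\L^2(\Omega_1)}\lesssim\norm{\m}{\H^{1-\eps}(\Omega_1)}$; and combining $\nabla u_{11h}\to\nabla u_{11}$ in $\L^2(\Omega_1)$ with the triangle inequality and $\norm{g_h-g}{H^{-1/2}(\Gamma_1)}\to0$ gives $R_h(\m)\to R(\m)$ in $\L^2(\Omega_1)$. Lemma~\ref{prop:nonlin:conv} then delivers~\eqref{assumption:chi:bounded}--\eqref{assumption:chi:convergence}.

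The main obstacle is thus concentrated in the two claims that $\norm{g_h}{H^{-1/2}(\Gamma_1)}\lesssim\norm{\m}{\H^{1-\eps}(\Omega_1)}$ holds $h$-independently and that $g_h\to g$ in $H^{-1/2}(\Gamma_1)$. Splitting $g_h-g=\bigl[\Pi_h(\m\cdot\normal_1)-\m\cdot\normal_1\bigr]+\bigl[\delta_1^{\rm int}u_{11}-\partial u_{11h}/\partial\normal_1\bigr]$, the first bracket tends to $0$ even in $L^2(\Gamma_1)$ by the $L^2$-stability and convergence of the projection $\Pi_h$ --- which is exactly the step requiring $\m\cdot\normal_1\in L^2(\Gamma_1)$, i.e.\ $\eps<1/2$, and which explains why $\m\in\L^2(\Omega_1)$ alone does not suffice. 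The second bracket --- convergence of the element-wise discrete conormal derivative towards $\delta_1^{\rm int}u_{11}$ in the weak boundary norm --- is the genuinely delicate point: I expect to treat it via the variational identity $\dual{g}{\gamma_1^{\rm int}v}_{\Gamma_1}=\dual{\m-\nabla u_{11}}{\nabla v}_{\Omega_1}$ for $v\in H^1(\Omega_1)$, its discrete counterpart obtained from~\eqref{eq:gcr1h} by an element-wise Green's formula, Galerkin orthogonality, and control of the inter-element jumps of $\partial u_{11h}/\partial\normal_1$ over interior faces by the quasi-uniformity of the meshes together with inverse and trace estimates; this is the analogue, for the Garc\'ia-Cervera--Roma scheme, of the boundary-flux analysis performed for the Fredkin--Koehler coupling in Ref.~\refcite{goldenits}.
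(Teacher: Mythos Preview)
Your overall architecture matches the paper: reduce to Lemma~\ref{prop:nonlin:conv} with trivial $A$ and $S$, introduce the auxiliary $\widetilde u_{12h}$ with boundary data $I_h^{\Gamma_1}V_1 g$, use the C\'ea lemma for inhomogeneous Dirichlet problems plus stability to reduce everything to $\norm{g_h-g}{H^{-1/2}(\Gamma_1)}\to0$, and split $g_h-g$ into the $\Pi_h$-part and the normal-derivative part. The divergence from the paper is entirely in how the second bracket $\delta_1^{\rm int}u_{11}-\partial u_{11h}/\partial\normal_1$ is handled.

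The paper does \emph{not} use a variational identity, Galerkin orthogonality, or interior-jump control. Instead it invokes elliptic regularity for the homogeneous Dirichlet problem on the polyhedral domain $\Omega_1$ (Ref.~\refcite{monk}, Thm.~3.8) together with interpolation: for $\m\in\H^{1-\eps}(\Omega_1)$ one obtains $u_{11}\in H^{3/2+\eps}(\Omega_1)$ and the quantitative rate $\norm{u_{11}-u_{11h}}{H^1(\Omega_1)}=\OO(h^{1/2+\eps})$. The extra half-derivative makes $\delta_1^{\rm int}u_{11}$ and $D^2u_{11}$ available in $L^2$, and a multiplicative trace inequality (Ref.~\refcite{fkmp}, Lemma~3.4) applied elementwise then yields $\norm{\delta_1^{\rm int}u_{11}-\partial u_{11h}/\partial\normal_1}{L^2(\Gamma_1)}=\OO(h^{\eps})$ directly, hence convergence in $H^{-1/2}(\Gamma_1)$. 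The $\Pi_h$-term is handled by the duality estimate $\norm{(1-\Pi_h)(\m\cdot\normal_1)}{H^{-1/2}(\Gamma_1)}\lesssim h^{1/2}\norm{\m\cdot\normal_1}{L^2(\Gamma_1)}$. So the role of $\eps$ in the paper is twofold: it gives $\m\cdot\normal_1\in L^2(\Gamma_1)$ (which you identified) \emph{and} it triggers the regularity shift $u_{11}\in H^{3/2+\eps}$ that drives the normal-derivative estimate. Your proposed route---controlling the \emph{strong} elementwise boundary flux via the variational flux plus interior jumps---is the natural machinery for the variational conormal, but the GCR scheme uses the pointwise $\partial u_{11h}/\partial\normal_1\in\PP^0$, and bridging the two without a rate on $\norm{u_{11}-u_{11h}}{H^1}$ (i.e., without regularity) is not standard and is left open in your sketch. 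The missing ingredient is precisely elliptic regularity; once you add it, the trace-inequality argument is shorter and avoids interior faces altogether.
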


\begin{proof}
We argue essentially as in the proof of Proposition~\ref{prop:stray field}.
First, we see that 
\begin{align*}
 \norm{u_{11}-u_{11h}}{H^1(\Omega_1)}
 \lesssim \min_{v_h\in\SS^1_0(\TT_h^{\Omega_1})}\norm{u_{11}-v_h}{H^1(\Omega_1)}
 \xrightarrow{h\to0}0,
\end{align*}
for all $\m\in\L^2(\Omega_1)$. Moreover, for $\m\in \H^1(\Omega_1)$, elliptic 
regularity for the Dirichlet problem~\eqref{eq:gcr1} even predicts 
$u_{11}\in H^{3/2+\mu}(\Omega_1)$ and hence 
$\norm{u_{11}-u_{11h}}{H^1(\Omega_1)} = \OO(h^{1/2+\mu})$ for some $\mu>0$
which depends only on the shape of the polyhedral Lipschitz domain $\Omega_1$,
see, e.g., Ref.~\refcite{monk}, Theorem~3.8. By interpolation, these
observations yield the existence of some (small) $0<\eps<1/2$ such that 
\begin{align}\label{dp:regularity}
 u_{11}\in H^{3/2+\eps}(\Omega_1)\text{ with }
 \norm{u_{11}-u_{11h}}{H^1(\Omega_1)} = \OO(h^{1/2+\eps})
 \text{ for all } \m\in\H^{1-\eps}(\Omega_1).
\end{align}
From now on, we assume $\m\in\H^{1-\eps}(\Omega_1)$ and note that,
in particular, $\delta_1^{\rm int}u_{11} = \partial u_{11}/\partial\normal_1$ 
exists in $L^2(\Gamma_1)$. The trace inequality (e.g. Ref.~\refcite{fkmp}, 
Lemma~3.4) proves for any face
$E\in\TT_h^{\Omega_1}|_{\Gamma_1}$ with corresponding element $T\in\TT_h^{\Omega_1}$
(i.e., $E\subset\partial T\cap\Gamma_1$) that
\begin{align*}
 \norm{\delta_1^{\rm int}&u_{11} - \partial u_{11h}/\partial\normal_1}{L^2(\partial T\cap\Gamma_1)}^2\\
 &\lesssim h^{-1}\norm{\nabla(u_{11}-u_{11h})}{\L^2(T)}^2 + \norm{\nabla(u_{11}-u_{11h})}{\L^2(T)} \norm{D^2(u_{11}-u_{11h})}{\L^2(T)}.
\end{align*}
With $D^2u_{11h}=0$ on $T$, we sum over all elements $T\in\TT_h^{\Omega_1}$ 
and obtain
\begin{align*}
 \norm{\delta_1^{\rm int}&u_{11}- \partial u_{11h}/\partial\normal_1}{L^2(\Gamma_1)}^2\\
 &\lesssim h^{-1}\norm{\nabla(u_{11}-u_{11h})}{\L^2(\Omega_1)}^2 + \norm{\nabla(u_{11}-u_{11h})}{\L^2(\Omega_1)}\norm{D^2u_{11}}{\L^2(\Omega_1)}
 = \OO(h^{2\eps}).
\end{align*}
Together with the continuous inclusion 
$H^{-1/2}(\Gamma_1)\subseteq L^2(\Gamma_1)$, 
it follows $\norm{\delta_1^{\rm int}u_{11}- \partial u_{11h}/\partial\normal_1}{H^{-1/2}(\Gamma_1)}\to0$ as $h\to0$.
Let $\widetilde u_{12h}\in\SS^1(\TT_h^{\Omega_1})$ be the unique
solution of the auxiliary problem
\begin{align*}
 \gamma_1^{\rm int}\widetilde u_{12h} = I_h^{\Gamma_1}V_1(\m\cdot\normal_1-\delta_1^{\rm int}u_{11})
 \text{ and }
 \dual{\nabla \widetilde u_{12h}}{\nabla v_h}_{\Omega_1} = 0
 \quad\text{for all }v_h\in\SS_0^1(\TT_h^{\Omega_1}).
\end{align*}
Again, it holds $\gamma_1^{\rm int}\widetilde u_{12h} = I_h^{\Gamma_1}u_{12}$
and hence $\norm{u_{12}-\widetilde u_{12h}}{H^1(\Omega_1)}\to0$ as $h\to0$.
Stability of the inhomogeneous Dirichlet problem proves
\begin{align*} 
 \norm{\widetilde u_{12h}-u_{12h}}{H^1(\Omega_1)}
 &\lesssim\norm{I_h^{\Gamma_1}V_1\big((1-\Pi_h)\m\cdot\normal_1-(\delta_1^{\rm int}u_{11}-\partial u_{11h}/\partial\normal_1)\big)}{H^{1/2}(\Gamma_1)}
 \\&\lesssim
 \norm{(1-\Pi_h)\m\cdot\normal_1}{H^{-1/2}(\Gamma_1)}
 + \norm{\delta_1^{\rm int}u_{11}-\partial u_{11h}/\partial\normal_1}{H^{-1/2}(\Gamma_1)}.
\end{align*}
We already saw that the second term on the right-hand side vanishes
as $h\to0$.
For the first term, a duality argument (see, e.g., Ref.~\refcite{ccdpr}, Section~4) proves
\begin{align*}
 \norm{(1-\Pi_h)\m\cdot\normal_1}{H^{-1/2}(\Gamma_1)}
 \lesssim h^{1/2}\norm{\m\cdot\normal_1}{L^2(\Gamma_1)}
 \lesssim h^{1/2}\norm{\m}{\H^{1-\eps}(\Omega_1)},
\end{align*}
where we also used $0<\eps<1/2$ to admit a continuous trace operator
$\gamma_1^{\rm int}:\H^{1-\eps}(\Omega_1)\to\L^2(\Gamma_1)$.
Overall, we thus see
\begin{align}\label{eq:gcr:u12conv}
 \norm{u_{12}-u_{12h}}{H^1(\Omega_1)}
 \le \norm{u_{12}-\widetilde u_{12h}}{H^1(\Omega_1)}
 + \norm{\widetilde u_{12h}-u_{12h}}{H^1(\Omega_1)}
 \xrightarrow{h\to0}0.
\end{align}
The combintation of~\eqref{dp:regularity}--\eqref{eq:gcr:u12conv}
concludes $\norm{\pi(\m)-\pi_h(\m)}{\L^2(\Omega_1)}
\to0$ as $h\to0$, for all $\m\in\H^{1-\eps}(\Omega_1)$.
\end{proof}

\begin{figure}
\begin{center}
\newcommand\psbox[4][white]{\rput(#2){\rnode{#4}{%
  \psframebox[fillcolor=#1]{\tiny{\makebox{\tabular{c}#3\endtabular}}}}}}
\psset{framearc=0,shadow=true,xunit=0.8mm,yunit=0.8mm,fillstyle=solid, linestyle=none, shadowcolor=white}
\begin{pspicture}(8,-0)(135,50)
\psbox[gray!30]{30,25}{$\widetilde R$\\\qquad\qquad\qquad\qquad\\\,\\\,\\\,\\\,\\\,\\\,\\\,\\\,\\\,\\\,\\\,\\\,\\\,\\\,\\\,\\\,\\\,\\\,\\\,\\\,}{b1}
\psbox[gray!30]{74,25}{$\widetilde A^{-1}$\\\qquad\qquad\qquad\qquad\\\,\\\,\\\,\\\,\\\,\\\,\\\,\\\,\\\,\\\,\\\,\\\,\\\,\\\,\\\,\\\,\\\,\\\,\\\,\\\,}{b2}
\psbox[gray!30]{127,25}{\vphantom{$\widetilde S$}$S$\\\qquad\qquad\qquad\qquad\\\,\\\,\\\,\\\,\\\,\\\,\\\,\\\,\\\,\\\,\\\,\\\,\\\,\\\,\\\,\\\,\\\,\\\,\\\,\\\,}{b3}
\psset{framearc=0.2,shadow=true,xunit=0.8mm,yunit=0.8mm,fillstyle=solid,linestyle=solid, shadowcolor=black!55}
\psbox[green!30]{-8,25}{Input: $\m$}{input}
\psbox[green!30]{-8,5}{Input: $\f$}{input2}
\psbox[blue!30]{30,25}{Solve~\eqref{eq:fk1} to obtain\\{\color{red}\textbf{$\mathbf{u_{11}}$ on $\mathbf{\Omega_1}$}}}{one}
\psbox[blue!30]{30,45}{Solve~\eqref{eq:fk5} to obtain\\{\color{red}\textbf{$\mathbf{u_1}$ on $\mathbf{\Omega_2}$}}}{two}
\psbox[blue!30]{74,25}{Solve~\eqref{eq:jn1} to obtain\\%
{\color{red}\textbf{$\mathbf{u}$ on $\mathbf{\Omega_2}$}} and {\color{red}\textbf{$\mathbf{\delta_2^{\rm ext}u_2}$ on $\Gamma_2$}}}{four}
\psbox[blue!30]{127,25}{Solve~\eqref{eq:jn3} to obtain\\{\color{red}\textbf{$\mathbf{u_2}$ on $\mathbf{\Omega_1}$}}}{five}
\psbox[blue!30]{30,5}{Solve~\eqref{eq:uext:nondim} to obtain\\{\color{red}\textbf{$\mathbf{\uext}$ on $\mathbf{\Omega_2}$}}}{three}
\psbox[red!30]{127,5}{Output: $\mathbf{\operator(\m, \f)=\nabla u_2}$ on $\Omega_2$}{output}
\psset{nodesep=3pt,arrows=c->,shortput=nab,shadow=false,linewidth=0.05,arrowinset=0,arrowlength=0.8}
\ncline{input}{one}
\ncline{one}{two}
\ncline{two}{four}
\ncline{two}{five}
\ncline{three}{four}
\ncline{three}{five}
\ncline{four}{five}
\ncline{five}{output}
\ncline{input2}{four}
\ncline{input2}{three}
\end{pspicture}

\end{center}
\caption{Overview on the computation of $\operator(\m, \f) = \nabla u_2$ on $\Omega_1$.} \label{fig:u2}
\end{figure}
\subsection{Application: Multiscale approach for total magnetic field}
\label{sec:multi}
We aim to apply Lemma~\ref{prop:nonlin:conv} to the model problem posed in Section~\ref{sec:maxwell}, i.e., the
computation of $\operator(\m,\hext) = \nabla u_2$ on $\Omega_1$. 
In the following, we consider the subproblems needed for the computation of $\nabla u_2$
as well as their discretizations. An overview illustration is given in Figure~\ref{fig:u2}.
Throughout this section, we let
\begin{itemize}
\item $X:=H^{-1/2}(\Gamma_2)\times H^1(\Omega_2)$,
\item $Y:=\L^2(\Omega_2)$.
\end{itemize}
We recall that $H^{-1/2}(\Gamma_2)$ is the dual space of the trace space
$H^{1/2}(\Gamma_2)$ and that $\widetilde H^{-1}(\Omega_2)$ is the dual space of 
$H^1(\Omega_2)$, where duality is understood according to the respective $L^2$-scalar 
products. In particular, the dual space of $X$ is 
$X^* = H^{1/2}(\Gamma_2)\times \widetilde H^{-1}(\Omega_2)$.
\subsubsection{Continuous formulation}
To compute $\nabla u_2$ on $\Omega_1$, we proceed as implicitly outlined in Section~\ref{sec:maxwell}.
For a magnetization $\m \in \L^2(\Omega_1)$, we compute $u_{1}\in H^1(\Omega_1)$ 
as solution of the stray field operator on the microscopic part.
Recall from Section~\ref{section:fredkinkoehler} that in $\R^3\backslash\overline\Omega_1\supset\Omega_2$ it holds
$u_1 = u_{12} = \widetilde K_1 \gamma_1^{\rm int}u_{11}$ with $u_{11}\in H_*^1(\Omega_1)$ being the solution of~\eqref{eq:fk1}.
According to~\eqref{eq:intop_laplace},
$u_1$ on $\Omega_2$ thus solves the inhomogeneous Dirichlet problem
\begin{align}\label{eq:fk5}
 \gamma_2^{\rm int}u_1 
 = \gamma_2^{\rm int}\widetilde K_1 \gamma_1^{\rm int}u_{11}
 \text{ and }
 \dual{\nabla u_1}{\nabla v}_{\Omega_2}=0
 \text{ for all }v\in H^1_0(\Omega_2).
\end{align}
Recall $\nabla\cdot\hext=0$ from~\eqref{eq:Hext:div}, whence
$\dual{\hext\cdot\normal_2}{\gamma_2^{\rm int}v}_{\Gamma_2}
=\dual{\hext}{\nabla v}_{\Omega_2}$ for all $v\in H^1(\Omega_2)$.
 For the auxiliary potential $\uext\in H^1_*(\Omega_2)$,
the non-dimensional weak formulation of~\eqref{eq:uext} reads
\begin{align}\label{eq:uext:nondim}
  \dual{\nabla\uext}{\nabla v}_{\Omega_2}
  = -\dual{\hext}{\nabla v}_{\Omega_2}
 \text{ for all }v\in H^1_*(\Omega_2).
\end{align}
\par In the next step, we then compute the total magnetostatic potential 
$u = u_1 + u_2 + \uext$ on the macroscopic domain $\Omega_2$. With 
$\widetilde\chi(|\nabla u|) = \chi\big(M_s|\hext-\nabla u_1-\nabla u_2|\big)$,
the non-dimensional form 
of~\eqref{eq:u:omega2} 
reads
\begin{subequations}\label{eq:jn0}
\begin{align}
  \label{eq_:u:omega2:interior}
  \nabla\cdot\big( (1+\widetilde\chi(|\nabla u|))\nabla u\big) &= 0
  \hspace*{29.1mm}\text{in }\Omega_2, \\
  \Delta u_2 &= 0 
  \hspace*{29.1mm}\text{in }\R^3\backslash\overline\Omega_2, \\
  \label{eq_:u:omega2:jumpu}
  \gamma_2^{\rm ext}u_2 - \gamma_2^{\rm int}u &= -\gamma_2^{\rm int}(u_1 +\uext)
  \hspace*{5.2mm}\text{on }\Gamma_2, \\
  \label{eq_:u:omega2:jumpdu}
  \delta_2^{\rm ext}u_2 - (1+\widetilde\chi(|\nabla u|))\nabla u\cdot\normal_2 &= \hext\cdot\normal_2 - \delta_2^{\rm int}u_1
  \hspace*{5.6mm}\quad\text{on }\Gamma_2, \\
  u_2(x) &= \OO(1/|x|) 
  \hspace*{18.0mm}\text{as }|x|\to\infty.
\end{align}
\end{subequations}
Let $V_2 : H^{-1/2}(\Gamma_2) \rightarrow H^{1/2}(\Gamma_2)$ and $K_2:H^{1/2}(\Gamma_2) \rightarrow H^{1/2}(\Gamma_2)$ denote the simple-layer potential and 
the double-layer potential with respect to $\Gamma_2$ (see Section~\ref{section:bio}).
The transmission problem~\eqref{eq:jn0}
is then equivalently stated by means of the Johnson-N\'ed\'elec 
coupling from Ref.~\refcite{johnson-nedelec},
\begin{subequations}\label{eq:jn1}
\begin{align}
 \dual{(1+\widetilde\chi(|\nabla u|))\nabla u}{\nabla v}_{\Omega_2}
 -\dual{\phi}{\gamma_2^{\rm int}v}_{\Gamma_2}
 &= \dual{\delta_2^{\rm int}u_1}{\gamma_2^{\rm int}v}_{\Gamma_2}
 \!-\! \dual{\hext}{\nabla v}_{\Omega_2},
\\
 V_2\phi + (1/2-K_2)\gamma_2^{\rm int}u &= (1/2-K_2)\gamma_2^{\rm int}(u_1+\uext),
\end{align}
\end{subequations}
for all $v\in H^1(\Omega_2)$, see Ref.~\refcite{affkmp} for the non-linear case
and Refs.~\refcite{johnson-nedelec}, \refcite{sayas09} for the linear one.
The coupling formulation~\eqref{eq:jn1} provides the total potential $u\in H^1(\Omega_2)$ as well as 
the exterior normal derivative $\phi = \delta_2^{\rm ext}u_2\in H^{-1/2}(\Gamma_2)$. 
Existence and uniqueness of the solution $(\phi,u)\in X = H^{-1/2}(\Gamma)\times H^1(\Omega)$ of~\eqref{eq:jn1} hinges strongly on the material law 
$\widetilde\chi$ and will be discussed in Section~\ref{section:jn} 
below.

Since $u_2$ solves $-\Delta u_2 = 0$ in $\R^3\backslash\overline\Omega_2$, $u_2$ can be
computed by means of the representation formula
\begin{align}\label{eq:jn2}
 u_2 = -\widetilde V_2\delta_2^{\rm ext}u_2 + \widetilde K_2\gamma_2^{\rm ext}u_2
 \quad\text{in }\R^3\backslash\overline\Omega_2 \supset \Omega_1,
\end{align}
see, e.g., Ref.~\refcite{sauschwa}, Theorem~3.1.6.
To lower the computational cost for the later implementation, we will, however, not use 
the representation formula~\eqref{eq:jn2} on $\Omega_1$, but only on $\Gamma_1$ and solve an 
inhomogeneous Dirichlet problem instead. 
It holds $\gamma_2^{\rm ext}u_2 = \gamma_2^{\rm int}u_2
= \gamma_2^{\rm int}(u - u_1 - \uext)$. With 
$\phi=\delta_2^{\rm ext}u_2$ on $\Gamma_2$, we obtain
\begin{subequations}\label{eq:jn3}
\begin{align}
 -\Delta u_2 &= 0 \hspace*{58.5mm}\text{in }\Omega_1,\\
 \gamma_1^{\rm int}u_2 &= \gamma_1^{\rm int}\big(-\widetilde V_2\phi + \widetilde K_2\gamma_2^{\rm int}(u - u_1 - \uext)\big)\quad\text{on }\Gamma_1.
\end{align}
\end{subequations}
\subsubsection{Discrete formulation}
As for the stray field, we solve~\eqref{eq:fk1h} to obtain an approximation 
$u_{11h} \in\SS_*^1(\TT_h^{\Omega_1})$ of $u_{11}$. To discretize~\eqref{eq:fk5},
let $u_{1h}\in\SS^1(\TT_h^{\Omega_2})$ solve
\begin{align}\label{eq:fk5:disc}
 \gamma_2^{\rm int}u_{1h} = I_h^{\Gamma_2} K_1 \gamma_1^{\rm int}u_{11h} 
 \text{ and }
 \dual{\nabla u_{1h}}{\nabla v_h}_{\Omega_2}= 0 
 \text{ for all }v_h\in\SS_0^1(\TT_h^{\Omega_2}).
\end{align}
The discrete version of~\eqref{eq:uext:nondim} reads as follows:
Let $u_{{\rm app},h} \in \SS_*^1(\TT_h^{\Omega_2})$ solve
\begin{align}\label{eq:uext:disc}
 \dual{\nabla u_{{\rm app},h}}{\nabla v_h}_{\Omega_2}
 = - \dual{\hext}{\nabla v_h}_{\Omega_2}
 \quad\text{for all }v_h \in \SS_*^1(\TT_h^{\Omega_2}).
\end{align}
For the numerical solution of~\eqref{eq:jn1}, we compute $(\phi_h,u_h)\in X_h:=\PP^0(\TT_h^{\Omega_2}|_{\Gamma_2})\times \SS^1(\TT_h^{\Omega_2})$ 
such that
\begin{align}\label{eq:jn:disc}
\begin{split}
  \dual{(1+\widetilde\chi(|\nabla u_h|))\nabla u_h}{\nabla v_h}_{\Omega_2}  
  - \dual{\phi_h}{v_h}_{\Gamma_2}
  &= \dual{\partial u_{1h}/\partial\normal_2}{v_h}_{\Gamma_2}
  \!-\!\dual{\hext}{\nabla v_h}_{\Omega_2}, \\
  \dual{V_2\phi_h + (1/2-K_2)u_h}{\psi_h}_{\Gamma_2} 
 &= \dual{(1/2-K)(u_{1h} + u_{{\rm app},h})}{\psi_h}_{\Gamma_2}
\end{split}
\end{align}
for all $(\psi_h,v_h)\in X_h$. Existence and uniqueness of $(\phi_h,u_h)$
is discussed in Section~\ref{section:jn} below.
To discretize~~\eqref{eq:jn3}, let $u_{2h} \in \SS^1(\TT_h^{\Omega_1})$ solve
\begin{align}\label{eq:jn3:disc}
\begin{split}
&\gamma_1^{\rm int}u_{2h} = I_h^{\Gamma_1}\gamma_1^{\rm int}\big(-\widetilde V_2\phi_h + \widetilde K_2\gamma_2^{\rm int}(u_h - u_{1h} - u_{{\rm app},h})\big)
\\
&\dual{\nabla u_{2h}}{\nabla v_h}_{\Omega_1} = 0 \quad \text{ for all } v_h \in \SS^1_0(\TT_h^{\Omega_1}).
\end{split}
\end{align}

\subsubsection{Operator formulation}
\label{section:SAR}

With respect to the abstract notation of Lemma~\ref{prop:nonlin:conv},
the solutions of the problems~\eqref{eq:fk5}--\eqref{eq:uext:nondim} and~\eqref{eq:fk5:disc}--\eqref{eq:uext:disc} give rise to
the continuous linear operators
\begin{align}
\begin{split}\label{eq:multiscale:R}
 \widetilde R,\widetilde R_h&:\H^1(\Omega_1)\times \L^2(\Omega_2)
 \to H^{1/2}(\Gamma_2)\times \widetilde H^{-1}(\Omega_2),\\
 \widetilde R(\m,\hext) 
 &:=
 \big((1/2-K_2)\gamma_2^{\rm int}(u_1+\uext),(\gamma_2^{\rm int})^*\delta_2^{\rm int} u_1 -\nabla^*\hext\big),\\
  \widetilde R_h(\m,\hext) 
 &:=
 \big((1/2-K_2)\gamma_2^{\rm int}(u_{1h}+u_{{\rm app},h}),(\gamma_2^{\rm int})^*\partial u_{1h}/\partial\normal_2 -\nabla^*\hext\big),
\end{split}
\end{align}
where $(\gamma_2^{\rm int})^*:H^{-1/2}(\Gamma_2)\to \widetilde H^{-1}(\Omega_2)$ 
denotes the adjoint of the trace operator 
$\gamma_2^{\rm int}:H^1(\Omega_2)\to H^{1/2}(\Gamma_2)$
and $\nabla^*:\L^2(\Omega_2)\to\widetilde H^{-1}(\Omega_2)$ is the adjoint gradient.
Note that $\widetilde R$, $\widetilde R_h$ are also well-defined and bounded operators 
on $\L^2(\Omega_1)\times\L^2(\Omega_2)$ and hence by interpolation, for all $0<s<1,$ also
on $\H^s(\Omega_1)\times\L^2(\Omega_2)$.

The left-hand side of the coupling formulation~\eqref{eq:jn1} gives rise to the non-linear operator
\begin{align}\label{eq:multiscale:A}
\begin{split}
 &\widetilde A:H^{-1/2}(\Gamma_2)\times H^{1}(\Omega_2) 
 \to H^{1/2}(\Gamma_2)\times \widetilde H^{-1}(\Omega_2)
\end{split}
\end{align}
and is then equivalently stated by
\begin{align}\label{eq2:jn}
 \widetilde A(\phi,u) = \widetilde R(\m,\hext).
\end{align}
Note that the FEM-BEM coupling~\eqref{eq:jn:disc} takes the abstract form
\begin{align}\label{eq2:jn:disc}
 \dual{\widetilde A(\phi_h,u_h)}{(\psi_h,v_h)}_{X^*\times X}
 = \dual{\widetilde R_h(\m,\hext)}{(\psi_h,v_h)}_{X^*\times X}
\end{align}
for all 
$(\psi_h,v_h)\in X_h:=\PP^0(\TT_h^{\Omega_2}|_{\Gamma_2})\times \SS^1(\TT_h^{\Omega_2})$. In the subsequent Section~\ref{section:jn}, we comment on 
the existence and uniqueness of the solutions
of~\eqref{eq2:jn}--\eqref{eq2:jn:disc}.

Finally, the solution of~\eqref{eq:jn3} resp.\ its discretization~\eqref{eq:jn3:disc} give rise to the continuous linear operators
\begin{align}\label{eq:multiscale:S}
\begin{split}
 &S,S_h:H^{-1/2}(\Gamma_2)\times H^{1}(\Omega_2)\to\L^2(\Omega_1),
 \\
 &S(\phi,u) := \nabla u_2,
 \qquad
 S_h(\phi_h,u_h) := \nabla u_{2h}.
\end{split}
\end{align}
Overall, it holds 
\begin{align}\label{kotz:multiscale}
 \operator(\m, \hext) := S\widetilde A^{-1}\widetilde R(\m,\hext) = \nabla u_2
 \quad\text{and}\quad
 \operator_h(\m,\hext) := S_h(\phi_h,u_h) = \nabla u_{2h}
\end{align}
where 
$(\phi_h,u_h)\in X_h:=\PP^0(\TT_h^{\Omega_2}|_{\Gamma_2})\times \SS^1(\TT_h^{\Omega_2})$
solves~\eqref{eq:jn:disc} resp.~\eqref{eq2:jn:disc}.


\begin{remark}
Note that the formal definition of the operator $S$ (resp.\ $S_h$) once 
again requires the solution
of~\eqref{eq:fk5}--\eqref{eq:uext:nondim} 
(resp.~\eqref{eq:fk5:disc}--\eqref{eq:uext:disc}) to provide 
$\gamma_2^{\rm int}(u_1+\uext)$ on the right-hand side of~\eqref{eq:jn3}
(resp.~\eqref{eq:jn3:disc} with according discrete traces).
Theoretically, this can be dealt with by considering the extended operators
\begin{align*}
&\widehat R\big(\m,\hext\big)
= \big(\widetilde R(\m,\hext),\gamma_2^{\rm int}(u_1+\uext)\big),\\
&\widehat A\big(\phi,u,\gamma_2^{\rm int}(u_1+\uext)\big)
=  \big(\widetilde A(\phi,u),\gamma_2^{\rm int}(u_1+\uext)\big)\\
&\widehat S\big(\phi,u,\gamma_2^{\rm int}(u_1+\uext)\big)
= \nabla u_2.
\end{align*}%
Then, $\widehat S$ and $\widehat R$ are still linear and continuous. Provided
$A$ satisfies the assumptions of the Browder-Minty theorem for strongly
monotone operators, the inverse of $\widehat A$ is well-defined and
continuous so that (an obvious extension of) Lemma~\ref{prop:nonlin:conv}
still applies.
\end{remark}

\subsubsection{Well-posedness of Johnson-N\'ed\'elec coupling}
\label{section:jn}
The following lemma provides sufficient conditions such that the non-linear part of~\eqref{eq:jn1} is
strongly monotone and Lipschitz continuous~\eqref{eq:kotz}. The elementary proof is left 
to the reader.
\begin{lemma}\label{lemma:lipAmonA}
  Let $\widetilde\chi: \R_{\geq0} \rightarrow \R$ be a continuous function
  such that the function
  \begin{align*}
    g:\R_{\geq0} &\rightarrow\R, \quad g(t) =  t+\widetilde\chi(t)t
  \end{align*}
  is differentiable and fulfils
  \begin{align}\label{eq:gproperty}
    g'(t) \in [\gamma,L] \quad\text{for all }t\geq0
  \end{align}
  with constants $L\geq\gamma>0$.
  Then, the (non-linear) operator
  \begin{align*}
   \A : \L^2(\Omega_2) &\rightarrow \L^2(\Omega_2), \quad \A\ww = (1+\widetilde\chi(|\ww|))\ww 
  \end{align*}
  is Lipschitz continuous and strongly monotone, i.e., there holds
  \begin{align}\label{eq:kotz}
    L^{-2}\,\norm{\A\uu-\A\vv}{\L^2(\Omega_2)}^2 &\leq \norm{\uu-\vv}{\L^2(\Omega_2)}^2
    \le\gamma^{-1}\,\dual{\A\uu-\A\vv}{\uu-\vv}_{\Omega_2}
  \end{align}%
  for all $\uu,\vv\in\L^2(\Omega_2)$.
\qed
\end{lemma}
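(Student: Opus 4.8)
The plan is to reduce the $\L^2(\Omega_2)$-estimates to two purely algebraic inequalities on $\R^3$ and then to integrate. The point is that, since $g(0)=0+\widetilde\chi(0)\cdot0=0$, the map
\[
 F:\R^3\to\R^3,\qquad F(\aa):=g(|\aa|)\,\tfrac{\aa}{|\aa|}\ \ (\aa\ne\boldsymbol0),\qquad F(\boldsymbol0):=\boldsymbol0,
\]
is exactly the radial extension of the scalar function $g$; it is continuous (because $g(t)/t\to g'(0)$ as $t\to0^+$) and satisfies $F(\aa)=(1+\widetilde\chi(|\aa|))\aa$, so that $\A\ww=F\circ\ww$ acts pointwise. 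Hence it suffices to prove, for all $\aa,\bb\in\R^3$,
\[
 |F(\aa)-F(\bb)|\le L\,|\aa-\bb|\qquad\text{and}\qquad\bigl(F(\aa)-F(\bb)\bigr)\cdot(\aa-\bb)\ge\gamma\,|\aa-\bb|^2 ;
\]
squaring the first and integrating both over $\Omega_2$ then yields $\norm{\A\uu-\A\vv}{\L^2(\Omega_2)}\le L\norm{\uu-\vv}{\L^2(\Omega_2)}$ and $\dual{\A\uu-\A\vv}{\uu-\vv}_{\Omega_2}\ge\gamma\norm{\uu-\vv}{\L^2(\Omega_2)}^2$, which is precisely~\eqref{eq:kotz}.

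For the two pointwise bounds I would first record the elementary consequence $\gamma t\le g(t)\le Lt$ of $g(0)=0$ and $g'\in[\gamma,L]$, dispose of the trivial cases $\aa=\boldsymbol0$ or $\bb=\boldsymbol0$, and otherwise write $r:=|\aa|$, $s:=|\bb|$, $c:=(\aa\cdot\bb)/(rs)\in[-1,1]$. A short computation gives
\[
 |F(\aa)-F(\bb)|^2=g(r)^2+g(s)^2-2g(r)g(s)\,c,\qquad\bigl(F(\aa)-F(\bb)\bigr)\cdot(\aa-\bb)=g(r)r+g(s)s-\bigl(g(r)s+g(s)r\bigr)c,
\]
while $|\aa-\bb|^2=r^2+s^2-2rs\,c$. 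Both desired inequalities are affine in $c$, and using $g(r)g(s)\le L^2rs$ resp.\ $g(r)s+g(s)r\ge2\gamma rs$ one checks that in both cases the extremal value is $c=1$ (i.e.\ $\aa,\bb$ collinear). At $c=1$ they collapse to $\bigl(g(r)-g(s)\bigr)^2\le L^2(r-s)^2$ and $(r-s)\bigl(g(r)-g(s)-\gamma(r-s)\bigr)\ge0$, both of which follow immediately from $g(r)-g(s)=\int_s^r g'(\sigma)\,d\sigma$ together with $g'\in[\gamma,L]$ (the first because $|g(r)-g(s)|\le L|r-s|$, the second because $g(r)-g(s)-\gamma(r-s)$ has the same sign as $r-s$).

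I do not expect any genuine obstacle here — the statement is elementary, as the authors note. The only place that needs a moment of care is the reduction itself: recognising that $(1+\widetilde\chi(|\cdot|))(\cdot)$ is the radial extension of $g$, observing that $g(0)=0$ is what makes $F$ continuous and hence $\A$ a well-defined map $\L^2(\Omega_2)\to\L^2(\Omega_2)$, and identifying $c=1$ as the worst configuration in both affine-in-$c$ inequalities; after that everything reduces to the one-variable bounds $\gamma\le g'\le L$.
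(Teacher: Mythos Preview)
Your argument is correct. The paper does not actually give a proof of this lemma --- it states ``The elementary proof is left to the reader'' and places a \qed\ after the statement --- so there is nothing to compare; your reduction to the two pointwise inequalities on $\R^3$, followed by the affine-in-$c$ analysis that pins the worst case at $c=1$ and then appeals to $g'\in[\gamma,L]$, is exactly the kind of elementary verification the authors had in mind.
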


We stress that the operator $\widetilde A$ from~\eqref{eq:jn1} resp.~\eqref{eq:multiscale:A} is \emph{not} strongly monotone as, e.g., the left-hand side of~\eqref{eq:jn1} is zero for $(\phi,u)=(0,1)$. 
To overcome this problem, we define the linear operator 
\begin{align}\label{eq:multiscale:L}
  L: X^*\to X^*,\quad Lx^* := x^* + \dual{x^*}{(1,0)}_{X^*\times X} \dual{\widetilde A(\cdot,\cdot)}{(1,0)}_{X^*\times X},
\end{align}
where $1\in\PP^0(\TT_h^{\Omega_2}|_{\Gamma_2})$ denotes the constant function.
As observed in Ref.~\refcite{affkmp}, Section~4, the Johnson-N\'ed\'elec coupling equations can then be equivalently rewritten as follows:

\begin{lemma}\label{prop:equivalenceA}
The operator $L: X^* \to X^*$ from~\eqref{eq:multiscale:L} is well-defined, linear, and continuous. Let $\widetilde A$ be the operator from~\eqref{eq:jn1} resp.~\eqref{eq:multiscale:A}. Define $A := L\widetilde{A}$.
Let $X_\star$ be a closed subspace of $X=H^{-1/2}(\Gamma_2)\times H^1(\Omega_2)$
with $(1,0)\in X_\star$. Then, for any $\tilde x^*\in X^*$ and $x^*:=L\tilde x^*$,
the pair $(\phi_\star,u_\star) \in X_\star$ solves the operator formulation
\begin{align*}
 \dual{\widetilde A(\phi_\star, u_\star)}{(\psi_\star,v_\star)}_{X^*\times X}
 = \dual{\tilde x^*}{(\psi_\star,v_\star)}_{X^*\times X}
 \quad\text{for all }(\psi_\star,v_\star)\in X_\star
\end{align*}
if and only if
  \begin{align*}
    \dual{A (\phi,u)}{(\psi_\star,v_\star)}_{X^*\times X} = \dual{x^*}{(\psi_\star,v_\star)}_{X^*\times X}
 \quad\text{for all }(\psi_\star,v_\star)\in X_\star.
  \end{align*}
Under the assumptions of Lemma~\ref{lemma:lipAmonA} with $\gamma>1/4$, the operator $A = L\widetilde A$ 
  is Lipschitz continuous and strongly monotone.
  In particular, it fulfils the assumptions of the Browder-Minty theorem for 
  strongly monotone operators.
  In this case, $A$ as well as $\widetilde A$ are, in particular, invertible, and 
  $\widetilde A^{-1}\tilde x^* = A^{-1}x^*$.
    \qed
\end{lemma}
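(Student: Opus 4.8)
\emph{Strategy.} I would verify the four assertions in turn, reducing each to the rank-one structure of $L$ and to the monotonicity and Lipschitz properties of the interior nonlinearity supplied by Lemma~\ref{lemma:lipAmonA}, together with the mapping properties of the layer potentials $V_2,K_2$ from Section~\ref{section:bio}. First, the functional $\ell:=\dual{\widetilde A(\cdot,\cdot)}{(1,0)}_{X^*\times X}$ is a genuine element of $X^*$, because pairing $\widetilde A(\phi,u)$ with $(1,0)$ activates only the boundary component $V_2\phi+(1/2-K_2)\gamma_2^{\rm int}u$ of the coupling~\eqref{eq:jn1}, which is linear and bounded in $(\phi,u)$; the nonlinear term $(1+\widetilde\chi(|\nabla u|))\nabla u$ is tested against $v=0$ and drops out. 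Hence $L=\mathrm{id}_{X^*}+\dual{\,\cdot\,}{(1,0)}\,\ell$ is a bounded rank-one perturbation of the identity, which is the first claim. For the equivalence, expanding $A=L\widetilde A$ gives $\dual{A(\phi,u)}{(\psi,v)}=\dual{\widetilde A(\phi,u)}{(\psi,v)}+\dual{\widetilde A(\phi,u)}{(1,0)}\,\dual{\ell}{(\psi,v)}$ for all $(\psi,v)\in X$, and similarly $\dual{L\tilde x^*}{(\psi,v)}=\dual{\tilde x^*}{(\psi,v)}+\dual{\tilde x^*}{(1,0)}\,\dual{\ell}{(\psi,v)}$. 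Since $(1,0)\in X_\star$ by assumption, testing either variational identity with $(\psi_\star,v_\star)=(1,0)$ and using $\dual{\ell}{(1,0)}=\dual{\widetilde A(1,0)}{(1,0)}=\dual{V_2 1}{1}_{\Gamma_2}>0$ (ellipticity of the simple-layer operator in $\R^3$, so that the scalar $1+\dual{\ell}{(1,0)}\neq0$) forces $\dual{\widetilde A(\phi_\star,u_\star)}{(1,0)}=\dual{\tilde x^*}{(1,0)}$ in both directions; substituting this back cancels the rank-one corrections, so the $\widetilde A$- and the $A$-formulations over $X_\star$ coincide.

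\emph{Lipschitz continuity and the monotonicity identity.} Since $L$ is bounded and linear, $A=L\widetilde A$ is Lipschitz as soon as $\widetilde A$ is, and the latter follows from the Lipschitz bound~\eqref{eq:kotz} for $\ww\mapsto(1+\widetilde\chi(|\ww|))\ww$ composed with the bounded maps $\nabla$, $\nabla^*$, $\gamma_2^{\rm int}$, $(\gamma_2^{\rm int})^*$, $V_2$, $K_2$. For strong monotonicity, the crucial step --- valid precisely because $\ell$ is linear, so that $\dual{\widetilde A w-\widetilde A w'}{(1,0)}=\dual{\ell}{w-w'}$ --- is the identity
\[
 \dual{A w-A w'}{w-w'}=\dual{\widetilde A w-\widetilde A w'}{w-w'}+\big|\dual{\ell}{w-w'}\big|^2 .
\]
Writing $w-w'=(\delta\phi,\delta u)$ and $\A\ww:=(1+\widetilde\chi(|\ww|))\ww$, testing $\widetilde A w-\widetilde A w'$ against $(\delta\phi,\delta u)$ and combining the term $-\dual{\delta\phi}{\gamma_2^{\rm int}\delta u}_{\Gamma_2}$ with the boundary contribution $\dual{(1/2-K_2)\gamma_2^{\rm int}\delta u}{\delta\phi}_{\Gamma_2}$ produces
\begin{align*}
 \dual{\widetilde A w-\widetilde A w'}{w-w'}
 &=\dual{\A\nabla u-\A\nabla u'}{\nabla\delta u}_{\Omega_2}+\dual{V_2\delta\phi}{\delta\phi}_{\Gamma_2}\\
 &\quad-\dual{(1/2+K_2)\gamma_2^{\rm int}\delta u}{\delta\phi}_{\Gamma_2}.
\end{align*}

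\emph{The three estimates and the threshold $\gamma>1/4$.} By~\eqref{eq:kotz} the first summand is bounded below by $\gamma\,\norm{\nabla\delta u}{\L^2(\Omega_2)}^2$, the second is $\gtrsim\norm{\delta\phi}{H^{-1/2}(\Gamma_2)}^2$ by ellipticity of $V_2$, and the coupling term is controlled via the contraction property of the double-layer potential (Of--Steinbach, resp.\ Sayas on Lipschitz boundaries; see Ref.~\refcite{affkmp} and the references therein): $\norm{(1/2+K_2)\gamma_2^{\rm int}w}{V_2^{-1}}\le\norm{\nabla w}{\L^2(\Omega_2)}$ for all $w\in H^1(\Omega_2)$, where $\norm{\,\cdot\,}{V_2^{-1}}$ denotes the norm on $H^{1/2}(\Gamma_2)$ dual to $\dual{V_2\,\cdot\,}{\,\cdot\,}^{1/2}$; Cauchy--Schwarz in the $V_2$-inner product then yields $|\dual{(1/2+K_2)\gamma_2^{\rm int}\delta u}{\delta\phi}_{\Gamma_2}|\le\norm{\nabla\delta u}{\L^2(\Omega_2)}\,\dual{V_2\delta\phi}{\delta\phi}_{\Gamma_2}^{1/2}$. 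With $a:=\norm{\nabla\delta u}{\L^2(\Omega_2)}$ and $b:=\dual{V_2\delta\phi}{\delta\phi}_{\Gamma_2}^{1/2}$, the three estimates combine to $\dual{\widetilde A w-\widetilde A w'}{w-w'}\ge\gamma a^2+b^2-ab$, a quadratic form in $(a,b)$ which is positive definite exactly when its discriminant $\gamma-1/4$ is positive, i.e.\ under the hypothesis $\gamma>1/4$; in that case $\dual{\widetilde A w-\widetilde A w'}{w-w'}\gtrsim a^2+b^2\gtrsim\norm{\nabla\delta u}{\L^2(\Omega_2)}^2+\norm{\delta\phi}{H^{-1/2}(\Gamma_2)}^2$. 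Since $(1/2+K_2)1=0$, the stabilization term $\big|\dual{\ell}{w-w'}\big|^2=\big|\dual{\delta\phi}{V_2 1}_{\Gamma_2}+\dual{(1/2-K_2)\gamma_2^{\rm int}\delta u}{1}_{\Gamma_2}\big|^2$ then controls the mean value of $\delta u$ over $\Omega_2$, which the gradient term leaves unconstrained, so a Poincar\'e--Wirtinger argument upgrades the estimate to full coercivity $\dual{A w-A w'}{w-w'}\gtrsim\norm{w-w'}{X}^2$. I expect this coupling-term estimate --- the source of the threshold $1/4$ --- to be the one genuinely technical point; it is exactly the analysis carried out in Ref.~\refcite{affkmp}, Section~4.

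\emph{Invertibility.} The operator $A:X\to X^*$ is strongly monotone and Lipschitz, hence hemicontinuous, so the Browder--Minty theorem (Ref.~\refcite{zeidler}, Section~26.2) makes it bijective. Moreover $L$ is bijective on $X^*$: it has the form $\mathrm{id}+\dual{\,\cdot\,}{(1,0)}\,\ell$ with $1+\dual{\ell}{(1,0)}=1+\dual{V_2 1}{1}_{\Gamma_2}\neq0$, so its inverse is again an explicit rank-one perturbation of the identity. Consequently $\widetilde A=L^{-1}A$ is a composition of bijections, hence bijective, and for $\tilde x^*\in X^*$ with $x^*=L\tilde x^*$ one obtains $\widetilde A^{-1}\tilde x^*=A^{-1}L\tilde x^*=A^{-1}x^*$, which is the final claim.
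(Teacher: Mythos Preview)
Your proposal is correct and follows exactly the approach of Ref.~\refcite{affkmp}, Section~4, which is what the paper itself invokes; the paper gives no proof of its own here (note the \qed\ immediately after the statement), so you have in fact supplied more detail than required. The key ingredients you identify --- linearity of $\ell=\dual{\widetilde A(\cdot,\cdot)}{(1,0)}$ because the nonlinear volume term drops out when tested against $v=0$, the rank-one structure of $L$ giving the equivalence and the monotonicity identity $\dual{Aw-Aw'}{w-w'}=\dual{\widetilde Aw-\widetilde Aw'}{w-w'}+|\ell(w-w')|^2$, the Steinbach/Sayas contraction estimate for $(1/2+K_2)$ yielding the quadratic form $\gamma a^2+b^2-ab$ and hence the threshold $\gamma>1/4$, and the stabilization $|\ell(\cdot)|^2$ recovering the constants via $(1/2-K_2)1=1$ --- are precisely those of the cited reference.
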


For $\gamma>1/4$, the preceding lemma applies to $X_\star = X = H^{-1/2}(\Gamma_2)\times H^1(\Omega_2)$ as well as $X_\star = X_h = \PP^0(\TT_h^{\Omega_2}|_{\Gamma_2})
\times\SS^1(\TT_h^{\Omega_2}|_{\Gamma_2})$ and thus proves 
that~\eqref{eq2:jn} as well as~\eqref{eq2:jn:disc} admit unique solutions.
\par Finally, we give some examples of material laws $\widetilde\chi$, covered by Lemma~\ref{prop:equivalenceA}.
\begin{remark}\label{rem:materiallaw}
  (i) Consider the material law 
  \begin{align*}
    \widetilde\chi(t) = \c{tanh1}\tanh(\c{tanh2} t)/t\quad\text{for }t>0,
    \quad\widetilde\chi(0) = \c{tanh1}\c{tanh2}
  \end{align*}
  with dimensionless constants
  $\setc{tanh1},\setc{tanh2}>0$. Then, $g(t) = t + \c{tanh1}\tanh{\c{tanh2}t}$ fulfils~\eqref{eq:gproperty}
  with $\gamma = 1$ and $L = 1+\c{tanh1}\c{tanh2}$.
  \\ (ii)
  According to Ref.~\refcite{rzmp81}, it is reasonable to approximate the magnetic susceptibility in terms of a
  rational function, e.g.,
  \begin{align*}
    \widetilde\chi(t) = \frac{\c{chit1} + \c{chit2}t }{1+\c{chit3}t + \c{chit4}t^2 }
  \end{align*}
  with certain, material-dependent constants
  $\setc{chit1},\setc{chit2},\setc{chit3},\setc{chit4}>0$. For typical materials, it holds~\eqref{eq:gproperty} with $\gamma=1$ and some $L>1$ that depends on $\c{chit1},\c{chit2},\c{chit3},\c{chit4}$, see Ref.~\refcite{rzmp81}, Table~1.
\end{remark}
\subsubsection{Convergence Analysis}
The main result of this section is the following proposition.
\begin{proposition}
In addition to $\hext\in L^2(\Omega_T)$, suppose that $\hext\in L^\infty(\L^2(\Omega_2))$.
Adopt the notation of Section~\ref{section:SAR} for the operators 
$\widetilde R,R_h$ from~\eqref{eq:multiscale:R}, $\widetilde A$ from~\eqref{eq:multiscale:A}
and $\widetilde S,S_h$ from~\eqref{eq:multiscale:S}. Under the assumptions of Lemma~\ref{lemma:lipAmonA} with $\gamma>1/4$, the operator 
$\operator:=S\widetilde A^{-1}\widetilde R$
and its discretization $\operator_h$ from~\eqref{kotz:multiscale} satisfy the 
assumptions~\eqref{assumption:chi:bounded}--\eqref{assumption:chi:convergence}
of Theorem~\ref{theorem}.
\end{proposition}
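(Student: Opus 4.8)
The plan is to recognize the pair $(\operator,\operator_h)$ from~\eqref{kotz:multiscale} as an instance of the abstract setting $SA^{-1}R$ of Lemma~\ref{prop:nonlin:conv} and to verify its hypotheses one by one. Since the coupling operator $\widetilde A$ from~\eqref{eq:multiscale:A} is \emph{not} monotone (the left-hand side of~\eqref{eq:jn1} vanishes at $(0,1)$), the first step is to pass to $A:=L\widetilde A$ with $L$ from~\eqref{eq:multiscale:L}: by Lemma~\ref{prop:equivalenceA} --- this is where the hypothesis $\gamma>1/4$ enters --- $A$ is strongly monotone and Lipschitz continuous, hence hemicontinuous, so it fits the Browder--Minty framework of Section~\ref{sec:monotone} (with the nested, dense Galerkin spaces $X_h=\PP^0(\TT_h^{\Omega_2}|_{\Gamma_2})\times\SS^1(\TT_h^{\Omega_2})$), and moreover $\widetilde A^{-1}\tilde x^*=A^{-1}(L\tilde x^*)$. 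Consequently $\operator=S\widetilde A^{-1}\widetilde R=SA^{-1}(L\widetilde R)$, and I set $R:=L\widetilde R$, $R_h:=L\widetilde R_h$, which are linear and continuous because $L$, $\widetilde R$, $\widetilde R_h$ are. The only structural nuisance is that $S$ and $R$ implicitly require $\gamma_2^{\rm int}(u_1+\uext)$ (resp.\ its discrete analogue), which is not a component of $(\phi,u)$; this is absorbed by passing to the extended operators $\widehat A,\widehat R,\widehat S$ of the remark after~\eqref{kotz:multiscale}, which retain all the relevant properties and for which $\widehat A^{-1}$ is well defined (block triangular), so it suffices to check the hypotheses of Lemma~\ref{prop:nonlin:conv} for the extended triple.

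For the boundedness, the operators $S,S_h$ solve the inhomogeneous Dirichlet problems~\eqref{eq:jn3}/\eqref{eq:jn3:disc} on $\Omega_1$, whose data on $\Gamma_1$ are images of $(\phi,u,u_1,\uext)$ under $\widetilde V_2,\widetilde K_2$ traced to $\Gamma_1$; since $\dist(\Gamma_1,\Gamma_2)>0$ these potentials map into $C^\infty(\Gamma_1)$, so $S,S_h\in L(X,\L^2(\Omega_1))$ uniformly in $h$ (using $H^1$-stability of the discrete Dirichlet solve and of $I_h^{\Gamma_1}$). For $R,R_h$ I would chain the stability of the subproblems: the $u_{11}$-solve~\eqref{eq:fk1}/\eqref{eq:fk1h} is $\L^2(\Omega_1)$-stable in $H^1(\Omega_1)$; the $u_1|_{\Omega_2}$-solve~\eqref{eq:fk5}/\eqref{eq:fk5:disc} has smooth boundary data (again because $\Gamma_1,\Gamma_2$ are disjoint), so $u_1|_{\Omega_2}\in H^{3/2+\mu}(\Omega_2)$ and $\delta_2^{\rm int}u_1\in L^2(\Gamma_2)\hookrightarrow H^{-1/2}(\Gamma_2)$ is controlled; and the $\uext$-solve~\eqref{eq:uext:nondim}/\eqref{eq:uext:disc} is $\L^2(\Omega_2)$-stable. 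Composing with the bounded operators $(1/2-K_2)$, $(\gamma_2^{\rm int})^*$, $\nabla^*$, the traces, and $L$ gives $R,R_h\in L(\L^2(\Omega_1)\times\L^2(\Omega_2),X^*)$. Since $A=L\widetilde A$ is strongly (a fortiori uniformly) monotone, the case $\eps=1$ of Lemma~\ref{prop:nonlin:conv} (cf.\ the remark at the end of Section~\ref{sec:monotone}) applies, with $X=H^{-1/2}(\Gamma_2)\times H^1(\Omega_2)$ and $Y=\L^2(\Omega_2)$; there is no loss in working with $\L^2(\Omega_1)$ because, unlike the Garc\'ia-Cervera--Roma situation of Proposition~\ref{prop:gcr}, the normal trace appearing here is $\delta_2^{\rm int}u_1$ of the regular harmonic function $u_1$, not a normal trace of $\m$. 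Then~\eqref{eq:pi_bounded} is exactly~\eqref{assumption:chi:bounded}, and $\norm{\zeta_h^j}{Y}=\norm{\hext_h^j}{\L^2(\Omega_2)}\le C$ follows from $\hext\in L^\infty(\L^2(\Omega_2))$.

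For the convergence~\eqref{prop:nonlinear:S}, I would fix $(\phi,u)\in X$ (together with the extra datum of $\widehat A$) and show $S_h(\phi,u)\to S(\phi,u)$ strongly in $\L^2(\Omega_1)$: \eqref{eq:jn3:disc} is a conforming discretization of~\eqref{eq:jn3}, and the C\'ea lemma for inhomogeneous Dirichlet problems, together with the $H^1$-stability and approximation properties of $I_h^{\Gamma_1}$ and the smoothing of $\widetilde V_2,\widetilde K_2$ onto $\Gamma_1$, gives $\nabla u_{2h}\to\nabla u_2$ in $\L^2(\Omega_1)$ --- even strongly, so Remark~(i) after Lemma~\ref{prop:nonlin:conv} yields strong convergence of $\operator_h$. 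For~\eqref{prop:nonlinear:R}, fix $(\n,y)\in\L^2(\Omega_1)\times\L^2(\Omega_2)$ and show $R_h(\n,y)\to R(\n,y)$ strongly in $X^*$; the ingredients, each analogous to a step already carried out for the stray field, are: $u_{11h}\to u_{11}$ in $H^1(\Omega_1)$ (Galerkin, as in Proposition~\ref{prop:stray field}); $u_{1h}\to u_1$ in $H^1(\Omega_2)$ from~\eqref{eq:fk5:disc} versus~\eqref{eq:fk5} (C\'ea for inhomogeneous Dirichlet, stability with respect to the replacement of $u_{11}$ by $u_{11h}$ in the boundary data, approximation of $I_h^{\Gamma_2}$, smoothing of the double-layer potential onto $\Gamma_2$); $u_{{\rm app},h}\to\uext$ in $H^1(\Omega_2)$ (Galerkin for~\eqref{eq:uext:disc}); and the discrete conormal derivative $\partial u_{1h}/\partial\normal_2\to\delta_2^{\rm int}u_1$ in $L^2(\Gamma_2)$, by the local flux-recovery/trace inequality used in the proof of Proposition~\ref{prop:gcr} (with $D^2u_{1h}=0$ elementwise and elliptic regularity of the harmonic $u_1$ on $\Omega_2$), whence $(\gamma_2^{\rm int})^*\partial u_{1h}/\partial\normal_2\to(\gamma_2^{\rm int})^*\delta_2^{\rm int}u_1$ in $\widetilde H^{-1}(\Omega_2)$ via $L^2(\Gamma_2)\hookrightarrow H^{-1/2}(\Gamma_2)$. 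Assembling these with the continuity of $(1/2-K_2)$, $(\gamma_2^{\rm int})^*$, $\nabla^*$, the trace operators, and $L$ yields $R_h(\n,y)=L\widetilde R_h(\n,y)\to L\widetilde R(\n,y)=R(\n,y)$ in $X^*$, i.e.\ \eqref{prop:nonlinear:R}.

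With all hypotheses of Lemma~\ref{prop:nonlin:conv} in place, estimate~\eqref{eq:pi_bounded} is~\eqref{assumption:chi:bounded}, and for~\eqref{assumption:chi:convergence} one combines: the strong convergence $\m_{hk}^-\to\m$ in $\L^2(\Omega_T)$ with $\norm{\m_{hk}^-}{L^2(\H^1)}\lesssim1$ supplied by Theorem~\ref{theorem}~(a) and Lemma~\ref{lemma:dpr}; and the strong convergence $\zeta_{hk}^-=\hext_{hk}^-\to\hext$ in $L^2(\L^2(\Omega_2))$ with $\norm{\zeta_{hk}^-}{L^\infty(Y)}\lesssim1$, which holds under $\hext\in L^\infty(\L^2(\Omega_2))$ for the natural choice of $\hext_h^j$ (e.g.\ along the lines of Remark~\ref{rem:f}(iii) when $\hext$ is time-continuous into $\L^2(\Omega_2)$). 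Then~\eqref{eq:lemma} of Lemma~\ref{prop:nonlin:conv} is precisely~\eqref{assumption:chi:convergence}, which finishes the proof. I expect the main obstacle to be the convergence~\eqref{prop:nonlinear:R}, and within it the $L^2(\Gamma_2)$-convergence of the discrete conormal derivative $\partial u_{1h}/\partial\normal_2$ --- which forces the flux-recovery estimate and a modicum of elliptic regularity --- together with the bookkeeping required to reduce the $(u_1+\uext)$-dependence of $S$ and $R$ to the extended operators $\widehat A,\widehat R,\widehat S$.
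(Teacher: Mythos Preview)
Your overall strategy --- passing to $A=L\widetilde A$ via Lemma~\ref{prop:equivalenceA}, setting $R:=L\widetilde R$, $R_h:=L\widetilde R_h$, and then applying Lemma~\ref{prop:nonlin:conv} --- is exactly the paper's, and most of your verifications (the $S_h$-convergence via C\'ea for inhomogeneous Dirichlet problems, $u_{{\rm app},h}\to\uext$ by Galerkin, $u_{1h}\to u_1$ on $\Omega_2$, the bookkeeping with $\widehat A,\widehat R,\widehat S$) are correct. However, your claim that the case $\eps=1$ of Lemma~\ref{prop:nonlin:conv} applies, i.e.\ that $R,R_h\in L(\L^2(\Omega_1)\times\L^2(\Omega_2),X^*)$ with pointwise convergence~\eqref{prop:nonlinear:R} for all $\n\in\L^2(\Omega_1)$, has a gap precisely at the step you flag as the main obstacle. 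The flux-recovery/trace estimate reads
\[
\norm{\delta_2^{\rm int}u_1-\partial u_{1h}/\partial\normal_2}{L^2(\Gamma_2)}^2
\lesssim h^{-1}\norm{\nabla(u_1-u_{1h})}{\L^2(\Omega_2)}^2
+\norm{\nabla(u_1-u_{1h})}{\L^2(\Omega_2)}\norm{D^2u_1}{\L^2(\Omega_2)},
\]
and the first term tends to zero only if $\norm{u_1-u_{1h}}{H^1(\Omega_2)}=o(h^{1/2})$. You are right that $u_1$ is smooth on $\Omega_2$ (so $\|D^2u_1\|<\infty$ and the best-approximation error is $\OO(h)$), but $u_{1h}$ carries boundary data $I_h^{\Gamma_2}K_1\gamma_1^{\rm int}u_{11h}$, whence
\[
\norm{u_1-u_{1h}}{H^1(\Omega_2)}\lesssim \OO(h)+\norm{u_{11}-u_{11h}}{H^1(\Omega_1)}.
\]
For $\m\in\L^2(\Omega_1)$ only, the Galerkin error $\norm{u_{11}-u_{11h}}{H^1(\Omega_1)}$ tends to zero \emph{without any rate}, so the $h^{-1}$ factor is not compensated. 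The smoothing of $\widetilde K_1$ from $\Gamma_1$ to $\Gamma_2$ upgrades the target norm, but the rate is still governed by $\norm{u_{11}-u_{11h}}{H^1(\Omega_1)}$; your heuristic ``the normal trace is that of the regular $u_1$, not of $\m$'' overlooks that the \emph{discrete} $u_{1h}$ inherits the lack of rate from $u_{11h}$.

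The paper closes this gap by choosing $\eps>0$ small (not $\eps=1$): for $\m\in\H^{1-\eps}(\Omega_1)$, elliptic regularity of the Neumann problem~\eqref{eq:fk1} yields $u_{11}\in H^{3/2+\eps}(\Omega_1)$ and hence $\norm{u_{11}-u_{11h}}{H^1(\Omega_1)}=\OO(h^{1/2+\eps})$, which propagates to $\norm{u_1-u_{1h}}{H^1(\Omega_2)}=\OO(h^{1/2+\eps})$ and then to $\norm{\delta_2^{\rm int}u_1-\partial u_{1h}/\partial\normal_2}{H^{-1/2}(\Gamma_2)}=\OO(h^{\eps})$. With this single correction your argument becomes the paper's; the interpolation step built into Lemma~\ref{prop:nonlin:conv} (strong convergence $\m_{hk}^-\to\m$ in $L^2(\H^{1-\eps})$ from $L^2(\L^2)$-convergence plus $L^2(\H^1)$-boundedness) absorbs the extra $\H^{1-\eps}$ regularity requirement automatically.
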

\begin{proof}
With Lemma~\ref{prop:equivalenceA}, there exists a linear and continuous operator 
$L:X^*\to X^*$ such that $A:=L\widetilde A$ is Lipschitz continuous and strongly
monotone. It holds $\operator = SA^{-1}R$ with $R:=L\widetilde R$ and $\operator_h(\m,\hext) 
= S_h(\phi_h,u_h)$, where $(\phi_h,u_h)$ solves with $R_h:=L\widetilde R_h$
the variational formulation
\begin{align*}
 \dual{A(\phi_h,u_h)}{(\psi_h,v_h)}_{X^*\times X}
 = \dual{R_h(\m,\hext)}{(\psi_h,v_h)}_{X^*\times X}
 \text{ for all }(\psi_h,v_h)\in X_h.
\end{align*}
Therefore, the claim follows from Lemma~\ref{prop:nonlin:conv} if we prove that
there exists some $\eps>0$ such that
\begin{itemize}
\item[(i)] $\widetilde R_h(\m,\hext) \to \widetilde R(\m,\hext)$ strongly in $X^*$ 
for all $(\m,\hext)\in\H^{1-\eps}(\Omega_1)\times\L^2(\Omega_2)$;
\item[(ii)] $\widetilde S_h x \to \widetilde Sx$ strongly in $\L^2(\Omega_1)$ for all 
$x\in X$.
\end{itemize}
To verify (i), we argue as in the proofs of Proposition~\ref{prop:stray field}
and Proposition~\ref{prop:gcr}.  First, elliptic regularity for the Neumann problem
\eqref{eq:fk1} (see, e.g., Ref.~\refcite{monk}, Theorem 3.8) provides 
some $\eps>0$ such that, for $\m\in\H^{1-\eps}(\Omega_1)$, it holds
$\norm{u_{11}-u_{11h}}{H^1(\Omega_1)} = \OO(h^{1/2+\eps})$.
Second, recall that $u_1=\widetilde K_1\gamma_1^{\rm int}u_{11}\in C^\infty(\overline\Omega_2)\subset H^2(\Omega_2)$. 
Hence, the inhomogeneous Dirichlet problem~\eqref{eq:fk1} leads to
\begin{align*}
 \norm{u_1-u_{1h}}{H^1(\Omega_2)}
 \lesssim \min_{v_h\in\SS^1(\TT_h^{\Omega_2})}\norm{u_{1}-v_h}{H^1(\Omega_2)}
 + \norm{u_{11}-u_{11h}}{H^1(\Omega_1)} = \OO(h^{1/2+\eps}).
\end{align*}
Third, arguing as in the proof of Proposition~\ref{prop:gcr}, we derive
\begin{align*}
 \norm{\delta_2^{\rm int}u_1 - \partial u_{1h}/\partial\normal_2}{H^{-1/2}(\Gamma_2)}
 =\OO(h^\eps).
\end{align*}
Fourth, the discretization of the auxiliary potential guarantees
\begin{align*}
 \norm{u_{\rm app}-u_{{\rm app},h}}{H^1(\Omega_1)}
 \lesssim \min_{v_h\in\SS^1(\TT_h^{\Omega_1})}\norm{u_{\rm app}-v_h}{H^1(\Omega_1)}
 \xrightarrow{h\to0}0.
\end{align*}
By definition~\eqref{eq:multiscale:R} of the operators $\widetilde R$ and $\widetilde R_h$, the combination of the 
foregoing three convergences proves (i).

The verification of (ii) follows along the same lines. This concludes the proof.
\end{proof}
\appendix
\section{Improved energy estimate}
\noindent Under some additional assumptions on the general field contribution $\operator$ and on the applied field $\hext$, as well as on their respective discretizations, we can derive the following physically meaningful energy estimate.
In this section, we neglect any possible dependence of $\operator$ and $\operator_h$ on a second quantity $\zeta$.
\begin{propappendix}\label{lem:energy:improved}
Let $\operator:\L^2(\Omega_1)\to\L^2(\Omega_1)$ be a linear, bounded, and self-adjoint operator, satisfying
\begin{equation} \label{eq:boundedness_l4}
\norm{\operator(\w)}{\L^4(\Omega_1)} \leq \c{l4_boundedness} \norm{\w}{\L^4(\Omega_1)} \quad \text{ for all } \w \in \L^4(\Omega_1)
\end{equation}
with a constant $\setc{l4_boundedness}>0$.
Let $\operator_h$ satisfy
\begin{equation}\label{assumption:chi:convergence2}
 \operator_h(\m_{hk}^-) \to \operator(\m)
 \quad\text{strongly in }\L^2(\Omega_T) \text{ for some subsequence}.
\end{equation}
Let the applied field $\hext \in \L^4(\Omega_1)$ be constant in time.
Assume that $\hext_h^{j+1}=\hext_h^j=\hext_h$ for all $j$, and
\begin{align}\label{assumption:f2}
 \hext_h \to \hext \quad\text{ strongly in }\L^2(\Omega_1).
\end{align}
Then, the energy
\begin{align}\label{eq:energy}
\EE\left(\m(t)\right) :=
\frac{\Cexchange}{2}\norm{\nabla \m(t)}{\L^2(\Omega_1)}^2
+\frac{1}{2} \dual{\operator(\m(t))}{\m(t)}_{\Omega_1}
- \dual{\hext}{\m(t)}_{\Omega_1}
\end{align}
satisfies
\begin{align}\label{eq:energy_est}
\EE\left(\m(t)\right) + \alpha\norm{\partial_t\m}{\L^2(\Omega_t)}^2 \leq \EE\left(\m_0\right)
\end{align}
for almost every $t \in (0,T)$.
\end{propappendix}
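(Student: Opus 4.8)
The plan is to reproduce at the fully discrete level the (formal) continuous energy law $\tfrac{d}{dt}\EE(\m(t))=-\alpha\norm{\partial_t\m(t)}{\L^2(\Omega_1)}^2$ and then pass to the limit. That identity comes from testing the Gilbert form~\eqref{eq:form:alg} with $\partial_t\m$: the terms $\m\times\partial_t\m$ and $(\m\cdot\heff)\m$ drop out since $\m\cdot\partial_t\m=0$, leaving $\alpha\norm{\partial_t\m}{\L^2(\Omega_1)}^2=\dual{\heff}{\partial_t\m}_{\Omega_1}=-\tfrac{d}{dt}\EE(\m)$, where one uses $\heff=\Cexchange\Delta\m-\operator(\m)+\hext$, the Neumann condition, the fact that $\operator$ is linear, bounded and self-adjoint (so the variational derivative of $\tfrac12\dual{\operator(\cdot)}{\cdot}_{\Omega_1}$ is $\operator(\cdot)$), and $\m\in\H^1(\Omega_T)$ (which makes $t\mapsto\dual{\operator(\m(t))}{\m(t)}_{\Omega_1}$ absolutely continuous).

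First I would work on the discrete side. Testing~\eqref{eq:alg} with $\ppsi_h=\v_h^i\in\KK_{\m_h^i}$ (so the cross-product term vanishes), using the angle condition~\eqref{assumption:mesh} in the form $\norm{\nabla\m_h^{i+1}}{\L^2(\Omega_1)}^2\le\norm{\nabla(\m_h^i+k\v_h^i)}{\L^2(\Omega_1)}^2$ already exploited in Lemma~\ref{lem:energy}, and multiplying by $k$ gives, for each $i$,
\begin{align*}
\alpha k\norm{\v_h^i}{\L^2(\Omega_1)}^2 + \Cexchange k^2(\theta-\tfrac12)\norm{\nabla\v_h^i}{\L^2(\Omega_1)}^2 + \tfrac{\Cexchange}{2}\big(\norm{\nabla\m_h^{i+1}}{\L^2(\Omega_1)}^2-\norm{\nabla\m_h^i}{\L^2(\Omega_1)}^2\big) \le k\dual{\hext_h-\operator_h(\m_h^i)}{\v_h^i}_{\Omega_1}.
\end{align*}
On the right I would replace $k\v_h^i$ by $\m_h^{i+1}-\m_h^i$ (the nodal defect $\m_h^{i+1}-\m_h^i-k\v_h^i$ being controlled pointwise by Lemma~\ref{lemma2:aux}) and $\operator_h(\m_h^i)$ by $\operator(\m_h^i)$; self-adjointness lets me write $\dual{\operator(\m_h^i)}{\m_h^{i+1}-\m_h^i}_{\Omega_1}=\tfrac12\dual{\operator(\m_h^{i+1})}{\m_h^{i+1}}_{\Omega_1}-\tfrac12\dual{\operator(\m_h^i)}{\m_h^i}_{\Omega_1}-\tfrac12\dual{\operator(\m_h^{i+1}-\m_h^i)}{\m_h^{i+1}-\m_h^i}_{\Omega_1}$, so that summing over $i=0,\dots,j-1$ the main terms telescope into the discrete energy $\EE_h^\ell:=\tfrac{\Cexchange}{2}\norm{\nabla\m_h^\ell}{\L^2(\Omega_1)}^2+\tfrac12\dual{\operator(\m_h^\ell)}{\m_h^\ell}_{\Omega_1}-\dual{\hext_h}{\m_h^\ell}_{\Omega_1}$:
\begin{align*}
\EE_h^j + \alpha k\sum_{i=0}^{j-1}\norm{\v_h^i}{\L^2(\Omega_1)}^2 + \Cexchange k^2(\theta-\tfrac12)\sum_{i=0}^{j-1}\norm{\nabla\v_h^i}{\L^2(\Omega_1)}^2 \le \EE_h^0 + \sum_{i=0}^{j-1}\mathcal R_i,
\end{align*}
where $\mathcal R_i$ collects $\tfrac12\dual{\operator(\m_h^{i+1}-\m_h^i)}{\m_h^{i+1}-\m_h^i}_{\Omega_1}$, $\dual{\operator(\m_h^i)-\hext_h}{\m_h^{i+1}-\m_h^i-k\v_h^i}_{\Omega_1}$, and the consistency defect $k\dual{(\operator-\operator_h)(\m_h^i)}{\v_h^i}_{\Omega_1}$.

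The hard part is showing $\sum_i\mathcal R_i$ is asymptotically negligible up to a fraction of the left-hand dissipation term $\Cexchange k^2(\theta-\tfrac12)\sum_i\norm{\nabla\v_h^i}{\L^2(\Omega_1)}^2$, which is available precisely because $\theta>\tfrac12$. The first contribution is $\lesssim\sum_i\norm{\m_h^{i+1}-\m_h^i}{\L^2(\Omega_1)}^2\lesssim k^2\sum_i\norm{\v_h^i}{\L^2(\Omega_1)}^2\lesssim k\to0$ by boundedness of $\operator$ on $\L^2$, Lemmas~\ref{lemma1:aux} and~\ref{lemma3:aux}, and the bound $k\sum_i\norm{\v_h^i}{\L^2(\Omega_1)}^2\lesssim\c{energy}$ of Lemma~\ref{lem:energy}. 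The consistency defect equals $\int_0^{t_j}\dual{(\operator-\operator_h)(\m_{hk}^-)}{\v_{hk}^-}_{\Omega_1}\,dt$ and tends to $0$ since $\operator(\m_{hk}^-)\to\operator(\m)$ strongly in $\L^2(\Omega_T)$ (boundedness of $\operator$ plus $\m_{hk}^-\to\m$ strongly) while $\operator_h(\m_{hk}^-)\to\operator(\m)$ strongly by~\eqref{assumption:chi:convergence2}, and $\v_{hk}^-$ is bounded in $\L^2(\Omega_T)$ by Lemma~\ref{lemma:dpr} --- this is exactly where the \emph{strong} convergence hypothesis~\eqref{assumption:chi:convergence2} enters (rather than merely~\eqref{assumption:chi:convergence}). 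For the nodal-defect term I would use H\"older with exponents $4$ and $4/3$: $\norm{\operator(\m_h^i)}{\L^4(\Omega_1)}\le\c{l4_boundedness}\norm{\m_h^i}{\L^4(\Omega_1)}\le\c{l4_boundedness}|\Omega_1|^{1/4}$ by~\eqref{eq:boundedness_l4} together with $|\m_h^i|\le1$, likewise $\norm{\hext_h}{\L^4(\Omega_1)}$ via $\hext\in\L^4(\Omega_1)$, and $\norm{\m_h^{i+1}-\m_h^i-k\v_h^i}{\L^{4/3}(\Omega_1)}\lesssim k^2\norm{\v_h^i}{\L^{8/3}(\Omega_1)}^2$ by Lemmas~\ref{lemma2:aux} and~\ref{lemma3:aux}; then the Sobolev embedding $\H^1(\Omega_1)\hookrightarrow\L^6(\Omega_1)$, interpolation between $\L^2$ and $\L^6$, and Young's inequality give $k^2\norm{\v_h^i}{\L^{8/3}(\Omega_1)}^2\le C_\delta\,k^2\norm{\v_h^i}{\L^2(\Omega_1)}^2+\delta\,k^2\norm{\nabla\v_h^i}{\L^2(\Omega_1)}^2$ with $\delta>0$ at my disposal, after which summation, the bound $k^2\sum_i\norm{\nabla\v_h^i}{\L^2(\Omega_1)}^2\lesssim(\theta-\tfrac12)^{-1}\c{energy}$, and a choice of $\delta$ small enough (depending only on $\Cexchange,\theta,\c{l4_boundedness},\Omega_1$, not on $h,k$) absorb the gradient part into the left-hand side while the $\L^2$-part vanishes like $k$. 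This absorption, which forces both the $\L^4$-type hypotheses~\eqref{eq:boundedness_l4}, $\hext\in\L^4(\Omega_1)$, and the sign condition $\theta>\tfrac12$, is the main obstacle.

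Finally I would pass to the limit along the subsequence of Theorem~\ref{theorem}(a), refined so that also~\eqref{assumption:chi:convergence2} holds and $\m_{hk}^-(t)\to\m(t)$ in $\L^2(\Omega_1)$ --- hence $\m_{hk}^-(t)\weakto\m(t)$ in $\H^1(\Omega_1)$ by the $L^\infty(\H^1)$ bound of Lemma~\ref{lemma:dpr} --- for a.e.\ $t$. Fix such a $t$ and let $j=j(h,k)$ with $t_j\le t<t_{j+1}$, so that $t_j\to t$. Weak lower semicontinuity of $\norm{\nabla\cdot}{\L^2(\Omega_1)}^2$ together with strong $\L^2(\Omega_1)$-convergence of $\m_{hk}^-(t)$ and of $\hext_h$ (assumption~\eqref{assumption:f2}) gives $\liminf\EE_h^j\ge\EE(\m(t))$; weak lower semicontinuity of $\norm{\cdot}{\L^2(\Omega_T)}^2$ applied to $\v_{hk}^-\mathbf 1_{[0,t_j]}\weakto\partial_t\m\,\mathbf 1_{[0,t]}$ (using $\v_{hk}^-\weakto\partial_t\m$ from Lemmas~\ref{lem:subsequences}--\ref{lem:v} and $|t-t_j|<k$) gives $\liminf\alpha k\sum_{i<j}\norm{\v_h^i}{\L^2(\Omega_1)}^2\ge\alpha\norm{\partial_t\m}{\L^2(\Omega_t)}^2$; and, assuming the (standard) strong convergence $\m_h^0\to\m^0$ in $\H^1(\Omega_1)$ of the initial data --- which holds, e.g., for nodal interpolation of a smooth $\m^0$ --- one has $\EE_h^0\to\EE(\m^0)=\EE(\m_0)$. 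Dropping the nonnegative residual multiple of $k^2\sum_i\norm{\nabla\v_h^i}{\L^2(\Omega_1)}^2$ left after the absorption, the summed inequality passes in the limit to $\EE(\m(t))+\alpha\norm{\partial_t\m}{\L^2(\Omega_t)}^2\le\EE(\m_0)$ for a.e.\ $t\in(0,T)$, which is~\eqref{eq:energy_est}.
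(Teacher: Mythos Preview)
Your proof follows essentially the same route as the paper's: test~\eqref{eq:alg} with $\v_h^i$, rearrange into a discrete energy difference plus remainder terms, control the nodal-defect contribution via H\"older with exponents $(4,4/3)$, Lemma~\ref{lemma2:aux}--\ref{lemma3:aux}, and an $\L^{8/3}$ interpolation estimate, and pass to the limit by lower semicontinuity.

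One small slip: in the nodal-defect term you invoke $\norm{\hext_h}{\L^4(\Omega_1)}$ ``via $\hext\in\L^4(\Omega_1)$'', but hypothesis~\eqref{assumption:f2} only gives $\hext_h\to\hext$ in $\L^2(\Omega_1)$, which does not bound $\norm{\hext_h}{\L^4}$. The paper avoids this by arranging the split so that the nodal defect $\m_h^{i+1}-\m_h^i-k\v_h^i$ is paired with $\hext$ itself (which \emph{is} in $\L^4$), while the discrepancy $\hext-\hext_h$ is paired with $k\v_h^i$ and absorbed into the $\L^2$ consistency term; the same rearrangement fixes your argument. A second, purely cosmetic difference: the paper estimates $k^2\norm{\v_h^i}{\L^{8/3}}^2$ via $\norm{\v_h^i}{\L^{8/3}}^2\lesssim\norm{\v_h^i}{\L^2}\norm{\v_h^i}{\H^1}$ and then uses the a~priori bound $\sqrt{k}\,\norm{\nabla\v_{hk}^-}{\L^2(\Omega_T)}\le\c{dpr}$ from Lemma~\ref{lemma:dpr} to show the summed right-hand side is $\OO(\sqrt{k})$, rather than your Young-and-absorb manoeuvre; both arguments work and rely on $\theta>1/2$ in the same way.
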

\begin{proof}
Given an arbitrary $t \in (0,T)$, let $j=0,\dots,N-1$ such that $t \in [t_j,t_{j+1})$.
Let $i=0,\dots,j$.
From the stability estimate~\eqref{eq:nabla_m_bounded}, we get
\begin{align*}
\EE(\m_h^{i+1}) - \EE(\m_h^i)
& \leq - \alpha k \norm{\v_h^i}{\L^2(\Omega_1)}^2
-\Cexchange(\theta - 1/2) k^2 \norm{\nabla \v_h^i}{\L^2(\Omega_1)}^2 \\
& \quad \underbrace{+\frac{1}{2} \dual{\operator(\m_h^{i+1})}{\m_h^{i+1}}_{\Omega_1}
-\frac{1}{2} \dual{\operator(\m_h^i)}{\m_h^i}_{\Omega_1}
- k \dual{\operator_h(\m_h^i)}{\v_h^i}_{\Omega_1}}_{=:T_1} \\
& \quad \underbrace{- \dual{\hext}{\m_h^{i+1}}_{\Omega_1}
+ \dual{\hext}{\m_h^i}_{\Omega_1}
+ k \dual{\hext_h}{\v_h^i}_{\Omega_1}}_{=:T_2}.
\end{align*}
Since $\operator$ is linear and self-adjoint, straightforward calculations show
\begin{equation*}
\begin{split}
T_1
& = k \dual{\operator(\m_h^i)-\operator_h(\m_h^i)}{\v_h^i}_{\Omega_1}
+ \frac{1}{2} k \dual{\operator(\m_h^{i+1}-\m_h^i)}{\v_h^i}_{\Omega_1} \\
& \quad +\frac{1}{2} \dual{\operator(\m_h^{i+1}+\m_h^i)}{\m_h^{i+1}-\m_h^i-k \v_h^i}_{\Omega_1},
\end{split}
\end{equation*}
and
\begin{equation*}
T_2 = - k \dual{\hext-\hext_h}{\v_h^i}_{\Omega_1}
- \dual{\hext}{\m_h^{i+1}-\m_h^i-k \v_h^i}_{\Omega_1}.
\end{equation*}
Combining the Cauchy-Schwarz inequality with Lemma~\ref{lemma1:aux}, Lemma~\ref{lemma3:aux}, and the $\L^2$-stability of $\operator$, we get 
\begin{equation*}
k \left\vert \dual{\operator(\m_h^{i+1}-\m_h^i)}{\v_h^i} \right\vert
\lesssim k \norm{\m_h^{i+1}-\m_h^i}{\L^2(\Omega_1)} \norm{\v_h^i}{\L^2(\Omega_1)}
\lesssim k^2 \norm{\v_h^i}{\L^2(\Omega_1)}^2.
\end{equation*}
The H\"older inequality, together with assumption~\eqref{eq:boundedness_l4}, Lemma~\ref{lemma2:aux}, and Lemma~\ref{lemma3:aux} yields
\begin{equation*}
\begin{split}
& \left\vert \dual{\operator(\m_h^{i+1}+\m_h^i)}{\m_h^{i+1}-\m_h^i-k \v_h^i} \right\vert \\
& \quad \leq \c{l4_boundedness} \norm{\m_h^{i+1}+\m_h^i}{\L^4(\Omega_1)} \norm{\m_h^{i+1}-\m_h^i-k \v_h^i}{\L^{4/3}(\Omega_1)} \\
& \quad \lesssim k^2 \norm{\m_h^{i+1}+\m_h^i}{\L^4(\Omega_1)} \norm{\v_h^i}{\L^{8/3}(\Omega_1)}^2 \\
& \quad \lesssim k^2 \norm{\v_h^i}{\L^{8/3}(\Omega_1)}^2.
\end{split}
\end{equation*}
The same argument also shows
\begin{equation*}
\left\vert\dual{\hext}{\m_h^{i+1}-\m_h^i-k \v_h^i}\right\vert
\leq \norm{\hext}{\L^4(\Omega_1)} \norm{\m_h^{i+1}-\m_h^i-k \v_h^i}{\L^{4/3}(\Omega_1)}
\lesssim k^2 \norm{\v_h^i}{\L^{8/3}(\Omega_1)}^2.
\end{equation*}
The log-convexity of Lebesgue norms and the Sobolev embedding $\H^1(\Omega_1) \subset \L^4(\Omega_1)$ yield
\begin{equation*}
\norm{\v_h^i}{\L^{8/3}(\Omega_1)}^2
\lesssim \norm{\v_h^i}{\L^2(\Omega_1)} \norm{\v_h^i}{\L^4(\Omega_1)}
\lesssim \norm{\v_h^i}{\L^2(\Omega_1)} \norm{\v_h^i}{\H^1(\Omega_1)}.
\end{equation*}
Altogether, we thus obtain
\begin{equation*}
\begin{split}
& \EE(\m_h^{i+1}) - \EE(\m_h^i)
+ \alpha k \norm{\v_h^i}{\L^2(\Omega_1)}^2
- k \dual{\operator(\m_h^i)-\operator_h(\m_h^i)}{\v_h^i}_{\Omega_1}
+ k \dual{\hext-\hext_h}{\v_h^i}_{\Omega_1}\\
& \quad \lesssim k^2 \left(\norm{\v_h^i}{\L^2(\Omega_1)}^2 + \norm{\v_h^i}{\L^2(\Omega_1)} \norm{\v_h^i}{\H^1(\Omega_1)}\right).
\end{split}
\end{equation*}
Analogously to~\eqref{eq:mhk-}, we define $\m_{hk}^+\in\PP^0(\II_k;\VV_h)$ by $\m_{hk}^+(t):=\m_h^{i+1}$ for $t_i \le t < t_{i+1}$.
Arguing as in Lemma~\ref{lem:subsequences}, one proves that $\m_{hk}^+\to\m$ strongly in $\L^2(\Omega_T)$ for a subsequence.
Summing the last estimate over $i=0,\dots,j$, we obtain
\begin{equation*}
\begin{split}
& \EE(\m_{hk}^+(t)) - \EE(\m_h^0)
+ \alpha \norm{\v_{hk}^-}{\L^2(\Omega_t)}^2 \\
& \quad - \dual{\operator(\m_{hk}^-)-\operator_h(\m_{hk}^-)}{\v_{hk}^-}_{\Omega_{t_{j+1}}}
+ \dual{\hext-\hext_h}{\v_{hk}^-}_{\Omega_{t_{j+1}}} \\
& \qquad \lesssim k \left(\norm{\v_{hk}^-}{\L^2(\Omega_{t_{j+1}})}^2
+ \norm{\v_{hk}^-}{\L^2(\Omega_{t_{j+1}})} \norm{\nabla\v_{hk}^-}{\L^2(\Omega_{t_{j+1}})}\right).
\end{split}
\end{equation*}
Exploiting the available convergence results on $\m_{hk}^{\pm}$ and $\v_{hk}^{-}$, the boundedness of $\sqrt{k}\norm{\nabla \v_{hk}^-}{\L^2(\Omega_T)}$ and $\norm{\v_{hk}^-}{\L^2(\Omega_T)}$ from Lemma~\ref{lemma:dpr}, and assumptions~\eqref{assumption:chi:convergence2}--\eqref{assumption:f2}, we can use standard arguments with lower semicontinuity for the limit $(h,k) \to (0,0)$ and derive the desired result~\eqref{eq:energy_est}.
\end{proof}
\begin{remarkappendix}
The operator $\operator$ is linear, $\L^2$-bounded and self-adjoint in many concrete situations, e.g., when it comprises the uniaxial anisotropy contribution from Section~\ref{sec:anisotropy} and the stray field contribution.
In this case $\operator$ is also well-defined and bounded as operator $\operator: \L^p(\Omega_1) \to \L^p(\Omega_1)$ for all $1 < p < \infty$, see Ref.~\refcite{praetorius2004}, and Assumption~\eqref{eq:boundedness_l4} is therefore satisfied.
Assumptions~\eqref{assumption:chi:convergence2} and~\eqref{assumption:f2} are slightly stronger than~\eqref{assumption:chi:convergence} and~\eqref{assumption:f}, respectively. However, they are fulfilled in many actual realizations $\operator_h$ and $\hext_h$, see Section~\ref{section:fredkinkoehler} and Remark~\ref{rem:f}.
\end{remarkappendix}

\section*{Acknowledgements}
The authors acknowledge financial support through the WWTF project MA09-029, the FWF project P21732, the FWF project SFB-ViCoM F4112-N13, the FWF graduate school W1245, and the innovative projects initiative of Vienna University of Technology.

\end{document}